\newtheorem{assumption}[theorem]{Assumption}
\title{Inverse Elastic Scattering for a Random Source}
\author{Jianliang Li\thanks{School of Mathematics and Statistics, Changsha
University of Science and Technology, Changsha, 410114, P. R. China. ({\tt
lijl@amss.ac.cn})} \and Peijun Li\thanks{Department of Mathematics, Purdue
University, West Lafayette, Indiana 47907, USA. ({\tt
lipeijun@math.purdue.edu})} 
}
\begin{document}

\maketitle

\begin{abstract}
Consider the inverse random source scattering problem for the two-dimensional
time-harmonic elastic wave equation with an inhomogeneous, anisotropic mass
density. The source is modeled as a microlocally isotropic generalized Gaussian
random function whose covariance operator is a classical pseudo-differential
operator. The goal is to recover the principle symbol of the covariance operator
from the displacement measured in a domain away from the source. For such a
distributional source, we show that the direct problem has a unique solution by
introducing an equivalent Lippmann--Schwinger integral equation. For the inverse
problem, we demonstrate that, with probability one, the principle symbol of the
covariance operator can be uniquely determined by the amplitude of
the displacement averaged over the frequency band, generated by a single
realization of the random source. The analysis employs the Born approximation,
asymptotic expansions of the Green tensor, and microlocal analysis of the
Fourier integral operators.
\end{abstract}

\begin{keywords}
Inverse source problem, elastic wave equation, Lippmann--Schwinger
integral equation, Gaussian random function, uniqueness
\end{keywords}

\begin{AMS}
78A46, 65C30
\end{AMS}

\pagestyle{myheadings}
\thispagestyle{plain}
\markboth{J. Li and P. Li}{Inverse Elastic Scattering for a Random Source}

\section{Introduction}

The inverse source scattering problems are to recover the unknown sources from
the radiated wave field which is generated by the unknown sources. These
problems are motivated by significant applications in diverse scientific areas
such as medical imaging \cite{ABF, FKM, NOH}, and antenna design and synthesis
\cite{DML}. Driven by these applications, the inverse source scattering problems
have been extensively studied by many researchers in both mathematical and
engineering communities. Consequently, a great deal of mathematical and
numerical results are available, especially for deterministic sources
\cite{ACTV, BN, BLRX, DML, EV}. It is known that the inverse source problem,
in general, does not have a unique solution at a single frequency due to the
existence of non-radiating sources \cite{BC, BC-JMP, DS, HKP}. There are two
approaches to overcome the issue non-uniqueness: one is to seek the minimum
energy solution \cite{MD}, which represents the pseudo-inverse solution for the
inverse source problem; the other is the use of multi-frequency data to achieve
uniqueness and gain increasing stability \cite{BLLT, BLT, BLZ, CIL, LY}. 

In many situations, the source, hence the wave field, may not be deterministic
but are rather modeled by random processes \cite{BAZC}. Due to the extra
challenge of randomness and uncertainties, little is known for the inverse
random source scattering problems. In \cite{BCL16, BCLZ, LCL, PL, BCL18, BX},
the random source was assumed to be driven by an additive white noise.
Mathematical modeling and numerical computation were proposed for a class of
inverse source problems for acoustic and elastic waves. The method
requires to know the expectation of the scattering data, which needs to be
measured corresponding to a fairly large number of realizations of the source. 

Recently, a different model is proposed in \cite{CHL, LPS} to describe random
functions. The random function is considered to be a generalized Gaussian random
function whose covariance is represented by a classical pseudo-differential
operator. The authors studied an inverse problem for the
two-dimensional random Schr\"{o}dinger equation where the potential function was
random. It is shown that the principle symbol of the covariance operator can be
uniquely determined by the backscattered far field \cite{CHL} or backscattered
field \cite{LPS}, generated by a single realization of the random potential and
plane waves \cite{CHL} or a point source \cite{LPS} as the incident field. A
related work can be found in \cite{HLP} where the authors considered an inverse
scattering problem in a half-space with an impedance boundary condition where
the impedance function was random. In \cite{LHL}, the inverse random source
scattering problems were considered for the time-harmonic acoustic and elastic
waves in a homogeneous and isotropic medium. The source is assumed to be a
microlocally isotropic generalized Gaussian random function. It is shown that
the amplitude of the scattering field averaged over the frequency band, obtained
from a single realization of the random source, determines uniquely the
principle symbol of the covariance operator. In this paper, we study an inverse
random source scattering problem for the two-dimensional elastic wave equation
with an inhomogeneous, anisotropic mass density. This paper significantly
extends our previous work on the inverse random source problem for elastic
waves. The techniques also differ greatly because a more complicated model
equation is considered. 

The wave propagation is governed by the stochastic
elastic wave equation
\begin{eqnarray}\label{a1}
\mu\Delta \bm{u}+(\lambda+\mu)\nabla\nabla\cdot
\bm{u}+\omega^2\bm{u}-\bm{M}\bm{u}=\bm{f} \quad {\rm in} ~ \mathbb R^{2},
\end{eqnarray}
where $\bm u\in\mathbb C^2$ is the complex-valued displacement vector,
$\omega>0$ is the angular frequency, $\lambda$ and $\mu$ are the Lam\'{e}
constants satisfying $\mu>0, \lambda+\mu>0$, and $\bm{M}\in\mathbb R^{2\times
2}$ is a deterministic real-valued symmetric matrix with a compact support
contained in $D\subset\mathbb R^2$ and represents either a linear load acting
on the elastic medium or an inhomogeneous, anisotropic mass density of the
elastic medium inside $D$. The randomness of \eqref{a1} comes from the external
source $\bm{f}=(f_1,f_2)^\top$. Throughout, we make the following assumption. 

\begin{assumption}\label{assu1}
The domain $D$ is bounded, simply connected, and Lipschitz. The source
$\bm{f}=(f_1,f_2)^\top$ is compactly supported in $D$ and $f_j, j =1, 2$ are
microlocally isotropic Gaussian random fields of the same order $m\in [2,
\frac{5}{2})$ in $D$. Each covariance operator $C_{f_j}$ is a classical
pseudo-differential operator having the same principle symbol
$\phi(x)|\xi|^{-m}$ with $\phi\in C_0^{\infty}(D), \phi\geq 0$.
Moreover, the source $\bm{f}$ is assumed to be bounded almost surely
with $\mathbb E(f_j)=0$ and $\mathbb E(f_1 f_2)=0$.
\end{assumption}

Since \eqref{a1} is imposed in the whole space $\mathbb R^{2}$, an
appropriate radiation condition is needed to complete the problem formulation.
By the Helmholtz decomposition, the displacement $\bm{u}$ can be decomposed into
the compressional part $\bm{u}_{\rm p}$ and the shear part $\bm{u}_{\rm s}$ away
from the source:
\[
\bm{u}=-\frac{1}{\kappa^2_{\rm p}}\nabla\nabla\cdot\bm{u}+\frac{1}{\kappa^2_{\rm
s}}{\bf curl}{\rm curl}\bm{u}:=\bm{u}_{\rm p}+\bm{u}_{\rm s}
\quad{\rm in} ~ \mathbb R^2\setminus \overline{D}.
\]
For a scalar function $u$ and a vector function ${\bm u}=(u_1, u_2)^\top$, the
vector and scalar cur operators are defined by  
\[
{\bf curl}u=(\partial_{x_2}u,-\partial_{x_1}u)^\top,\quad {\rm
curl}{\bm u}=\partial_{x_1} u_2-\partial_{x_2}u_1.
\]
The Kupradze--Sommerfeld radiation condition requires that $\bm{u}_{\rm p}$ and
$\bm{u}_{\rm s}$ satisfy the Sommerfeld radiation condition:
\begin{eqnarray}\label{a2}
\lim_{r\rightarrow\infty}r^{\frac{1}{2}}\left(\partial_r
\bm{u}_{\rm p} -{\rm i}\kappa_{\rm p}\bm{u}_{\rm p}\right)=0,\quad
\lim_{r\rightarrow\infty}r^{\frac{1}{2}}\left(\partial_r
\bm{u}_{\rm s} -{\rm i}\kappa_{\rm s}\bm{u}_{\rm s}\right)=0,\quad r=|x|,
\end{eqnarray}
where $\kappa_{\rm p}$ and $\kappa_{\rm s}$ are known as the compressional
wavenumber and the shear wavenumber, respectively, and are defined by 
\begin{eqnarray*}
\kappa_{\rm p}=\frac{\omega}{(\lambda+2\mu)^{1/2}}=c_{\rm p}\omega,\qquad
\kappa_{\rm s}=\frac{\omega}{\mu^{1/2}}=c_{\rm s}\omega.
\end{eqnarray*}
Here 
\begin{eqnarray*}
c_{\rm p} = (\lambda+2\mu)^{-1/2},\quad c_{\rm s} = \mu^{-1/2}.
\end{eqnarray*}
Note that $c_{\rm p}$ and $c_{\rm s}$ are independent of $\omega$ and
$c_{\rm p}<c_{\rm s}$.

Given $\omega, \lambda, \mu, \bm{M}$, and $\bm{f}$, {\em the direct scattering
problem} is to determine $\bm{u}$ which satisfies \eqref{a1}--\eqref{a2}. For
$m\in [2,5/2)$, the random source is a rough field and belongs to the Sobolev
space with a negative smoothness index almost surely. A careful study is needed
to show the well-posedness of the direct scattering problem for such a
distributional source. Using Green's theorem and the Kupradze--Sommerfeld
radiation condition, we show that the direct scattering problem is equivalent to
a Lippmann--Schwinger equation. By the Fredholm alternative along with the
unique continuation principle, we prove that the Lippmann--Schwinger equation
has a unique solution which belongs to the Sobolev space with a negative
smoothness index almost surely. Thus the well-posedness is established for the
direct scattering problem. 

Given $\omega, \lambda, \mu, \bm{M}$, {\em the inverse scattering problem} is to
determine  $\phi(x)$, the micro-correlation strength of the source, from the
displacement measured in a bounded domain $U\subset\mathbb R^2\setminus\overline
D$ standing for the measurement domain, which is required to satisfy the
following assumption. 

\begin{assumption}\label{assu2}
The measurement domain $U$ is bounded, simply connected, Lipschitz, convex, 
and has a positive distance to $D$. 
\end{assumption}

In addition, the following assumption is imposed on $\bm{M}$.

\begin{assumption}\label{assu3}
The matrix $\bm{M}=(M_{ij})_{2\times 2}$ is a deterministic and
real-valued symmetric matrix with $M_{ij}\in C_0^1(\overline{D})$ for $i,
j=1,2$.
\end{assumption}

The following result concerns the uniqueness of the inverse scattering problem
and is the main result of this paper. 

\begin{theorem}\label{theorem1}
Let $\bm{f}, U$, and $\bm M$ satisfy Assumptions \ref{assu1}, \ref{assu2}, and
\ref{assu3}, respectively. Then for all $x\in U$, it
holds almost surely that 
\begin{eqnarray}\label{a3}
\lim_{Q\rightarrow\infty}\frac{1}{Q-1}\int_1^Q\omega^{m+1}|\bm{u}(x,
\omega)|^2 d\omega=a\int_{\mathbb R^2}\frac{1}{|x-y|}\phi(y)dy,
\end{eqnarray}
where $a=\frac{1}{32\pi}\left({c_{\rm s}^{3-m}}+{c_{\rm
p}^{3-m}}\right)$ is a constant. Moreover, the function $\phi$ can be uniquely
determined from the integral equation \eqref{a3} for all $x\in U$.
\end{theorem}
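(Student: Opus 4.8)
The plan is to reduce the measured quantity to the Born approximation, compute the high-frequency asymptotics of its second moment, and then pass from the ensemble average to the frequency average of a single realization by an ergodicity argument. First, using the Lippmann--Schwinger equation established for the direct problem, I would split the displacement as $\bm u = \bm u_b + \bm u_{\rm sc}$, where the Born term $\bm u_b(x,\omega) = \int_D \bm G(x,y,\omega)\bm f(y)\,dy$ solves the equation with $\bm M \equiv 0$ and $\bm u_{\rm sc} = \int_D \bm G(x,y,\omega)\bm M(y)\bm u(y,\omega)\,dy$ collects the scattering contribution of $\bm M$. Because the extra convolution against $\bm G$ together with the boundedness of $\bm M$ supplies additional smoothing and decay in $\omega$, I expect $\omega^{m+1}\mathbb E|\bm u_{\rm sc}(x,\omega)|^2 \to 0$, while the cross term vanishes by Cauchy--Schwarz; hence only $\bm u_b$ contributes to the limit \eqref{a3}. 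Isolating this error term rigorously is a genuinely new step here relative to the homogeneous case, since $\bm M$ couples the components of $\bm u$.

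Next I would compute $\mathbb E|\bm u_b(x,\omega)|^2$. Writing the Green tensor through the Helmholtz fundamental solution $g_\kappa(x,y) = \tfrac{\rm i}{4}H_0^{(1)}(\kappa|x-y|)$ via $\bm G = \mu^{-1}g_{\kappa_{\rm s}}\bm I + \omega^{-2}\nabla_x\nabla_x^\top(g_{\kappa_{\rm s}} - g_{\kappa_{\rm p}})$, and using $\mathbb E(f_i(y)\overline{f_j(z)}) = \delta_{ij}K_j(y,z)$ (from $\mathbb E f_j = 0$ and $\mathbb E(f_1f_2)=0$), the second moment becomes a double integral of $\overline{G_{kj}(x,y)}\,G_{kj}(x,z)$ against the covariance kernels $K_j$. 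Replacing each $K_j$ by its principal symbol $\phi(y)|\xi|^{-m}$ and inserting the large-argument expansion $H_0^{(1)}(t)\sim\sqrt{2/(\pi t)}\,e^{{\rm i}(t-\pi/4)}$, the products of Green-tensor entries become Fourier integral operators whose phases, after the change of variables $z = y - w$, linearize as $\kappa(|x-z| - |x-y|) \approx \kappa\frac{(x-y)\cdot w}{|x-y|}$. A stationary-phase analysis then forces $\xi = -\kappa\frac{x-y}{|x-y|}$, evaluates $|\xi|^{-m} = \kappa^{-m}$, and collapses the $z$-integration, producing $\mathbb E|\bm u_b(x,\omega)|^2 \sim C\,\omega^{-(m+1)}\int_{\mathbb R^2}|x-y|^{-1}\phi(y)\,dy$; summing the separate compressional and shear contributions and tracking the factors $\mu^{-1}=c_{\rm s}^2$, $\kappa_{\rm s}=c_{\rm s}\omega$, $\kappa_{\rm p}=c_{\rm p}\omega$ yields exactly the constant $a = \tfrac{1}{32\pi}(c_{\rm s}^{3-m}+c_{\rm p}^{3-m})$. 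Cesàro averaging in $\omega$ preserves this limit, so $\frac{1}{Q-1}\int_1^Q\omega^{m+1}\mathbb E|\bm u(x,\omega)|^2\,d\omega \to a\int|x-y|^{-1}\phi(y)\,dy$.

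The hardest part is upgrading this statement about the expectation to almost-sure convergence of the frequency average of a single realization. Setting $X_Q = \frac{1}{Q-1}\int_1^Q\omega^{m+1}|\bm u(x,\omega)|^2\,d\omega$, I would show $\operatorname{Var}(X_Q)\to0$ by expanding it as a double frequency integral of $\operatorname{Cov}(|\bm u(x,\omega_1)|^2,|\bm u(x,\omega_2)|^2)$ and using the Gaussianity of $\bm f$ (Isserlis' theorem) to reduce the fourth moment to products of second moments. The essential estimate is a decorrelation bound showing this covariance decays as $|\omega_1-\omega_2|\to\infty$, which again rests on the oscillatory structure of $\bm G$ and a nonstationary-phase argument. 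Combined with $\mathbb E X_Q\to a\int|x-y|^{-1}\phi$, this yields $L^2$ convergence; almost-sure convergence then follows by extracting a subsequence with summable variance (Chebyshev and Borel--Cantelli) and controlling the gaps via the continuity of $Q\mapsto X_Q$, as in the earlier random-source analysis.

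Finally, for the uniqueness claim I would argue that for $x\in U$ the right-hand side $w(x) = \int_{\mathbb R^2}|x-y|^{-1}\phi(y)\,dy$ is real-analytic on $\mathbb R^2\setminus\overline D$ since $\phi\in C_0^\infty(D)$; knowing $w$ on the open set $U$ therefore determines $w$ throughout the unbounded component of $\mathbb R^2\setminus\overline D$ by analytic continuation, which is all of the exterior because $D$ is bounded and simply connected. The function $\phi$ is then recovered from the injectivity of the Riesz potential $\phi\mapsto w$: if two admissible symbols produced the same data, their difference $\psi\in C_0^\infty(D)$ would satisfy $\int|x-y|^{-1}\psi(y)\,dy = 0$ outside $\overline D$, and the unique continuation property of this potential, encoded in the relation $\widehat{w}(\xi)=c|\xi|^{-1}\widehat\psi(\xi)$ with $\widehat\psi$ entire by Paley--Wiener, forces $\psi\equiv0$.
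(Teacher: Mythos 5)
Your overall architecture (Born-type decomposition, stationary-phase computation of the leading second moment giving the constant $a$, frequency averaging, and Riesz-potential injectivity for uniqueness) parallels the paper, and your leading-term and uniqueness sketches are consistent with what the paper imports from \cite{LHL}. But there is a genuine gap at exactly the step you flag as ``genuinely new'': your claim that $\omega^{m+1}\mathbb{E}|\bm{u}_{\rm sc}(x,\omega)|^2\to 0$ pointwise in $\omega$ because the extra convolution against $\bm{G}$ ``supplies additional smoothing and decay'' is unjustified, and the pointwise version is in fact too strong to hope for. Expanding $\bm{u}_{\rm sc}$ in the Born series, the problematic term is the first iterate $\bm{u}_1=-K_\omega\bm{u}_0$. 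The paper's asymptotic analysis shows $\bm{u}_1(x,\omega)=\bm{v}_1(x,\omega)\,\omega^{-1}+O(\omega^{-2+\varepsilon})$, where $\bm{v}_1$ is an oscillatory double integral against the rough field $\bm{f}$ that is merely $O(1)$ in $\omega$ pathwise; hence $\omega^{m+1}|\bm{u}_1|^2\sim\omega^{m-1}|\bm{v}_1|^2$, which for $m\in[2,5/2)$ does not tend to zero pointwise. Only the Ces\`aro average can be killed, and that is the content of the paper's Theorem \ref{theorem5}: one proves $\int_1^\infty\omega^{m-2}|\bm{v}_1(x,\omega)|^2\,d\omega<\infty$ almost surely by rewriting each constituent oscillatory integral $g(x,\omega)$ as a one-dimensional Fourier transform $g=[\mathcal{F}^{-1}S](-\omega)$ of a function $S(t)$ living on the level sets $\Gamma_t$ of the two-point phase $A(y,z)=c_1|x-y|+c_2|y-z|$ (coarea formula, bi-Lipschitz charts $B_t$, convexity of $U$ to bound $|\nabla A|$ from below), then applying Parseval with mollification of $\bm{f}$ and the covariance-kernel bound of Lemma \ref{lemma4} when $m=2$, and a homogeneous Sobolev/Besov regularity estimate ${\mathbb E}|S(t)-S(t')|^2\lesssim|t-t'|^{(m-1)/2}$ together with the Hausdorff-measure bounds of Lemma \ref{lemma9} when $m\in(2,5/2)$. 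None of this machinery, nor any substitute for it, appears in your proposal; the operator-norm smoothing estimates you implicitly invoke (the analogue of Lemma \ref{lemma6}) are strong enough only for the tail $\sum_{n\geq 2}\bm{u}_n$, which the paper bounds deterministically by $\omega^{-2+\varepsilon'}$, not for $n=1$.

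A secondary weak point: you propose to upgrade the ensemble statement to almost-sure convergence by an Isserlis/variance-decay argument applied to the \emph{full} field $\bm{u}$. Since $\bm{u}=-(I+K_\omega)^{-1}H_\omega\bm{f}$ is linear in the Gaussian $\bm{f}$, Isserlis does apply in principle, but the required decorrelation bound in $|\omega_1-\omega_2|$ would have to be proved for the kernel of the full resolvent, whose oscillatory structure you have no handle on; the paper deliberately avoids this by quoting the almost-sure ergodic limit for $\bm{u}_0$ alone from \cite[Theorem 4.6]{LHL} and disposing of $\bm{u}_1$ and $\bm{b}=\sum_{n\geq2}\bm{u}_n$ \emph{pathwise}, so that no fourth-moment analysis of the $\bm{M}$-dependent terms is ever needed. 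To repair your proof you should restructure it along these lines: isolate $\bm{u}_1$, accept that only its frequency average vanishes, and supply the coarea/Fourier--Parseval argument (or an equivalent microlocal substitute) for the critical $\omega^{-1}$ component.
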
 

For any finite $Q$, the scattering data given in the left-hand side of
(\ref{a3}) is random in the sense of that it depends on the realization of the
source, while (\ref{a3}) shows that in the limit $Q\to \infty$, the scattering
data becomes statistically stable, i.e., it is independent of realization of the
source. Hence, Theorem \ref{theorem1} shows that the amplitude of the
displacement averaged over the frequency band, measured from a single
realization of the random source, can uniquely determine the micro-correlation
strength function $\phi$. The proof of Theorem \ref{theorem1} combines the
Born approximation, asymptotic expansions of the Green tensor, 
and microlocal analysis of integral operators

The paper is organized as follows. In Section 2, we briefly introduce some
necessary notations including the Sobolev spaces, the generalized Gaussian
random function, and some properties of the Hankel function of the first kind.
Section 3 addresses the direct scattering problem; Sections 4 and 5 study the
inverse scattering problem. In Section 3, the well-posedness of the direct
scattering problem is established for a distributional source. Using the
Riesz--Fredholm theory and the Sobolev embedding theorem, we show that the
direct scattering problem is equivalent to a uniquely solvable
Lippmann--Schwinger equation. Section 4 presents the Born approximation of the
solution to the Lippmann--Schwinger integral equation. Section 5
examines the second term in the Born approximation via the microlocal analysis. 
The paper is concluded with some general remarks in Section 6.

\section{Preliminaries}

In this section, we introduce some notations and properties of the Sobolev
spaces, the generalized Gaussian random functions, and the Hankel function of
the first kind. 

\subsection{Sobolev spaces}

Let $C_0^{\infty}(\mathbb R^2)$ be the set of smooth functions with compact
support, and $\mathcal{D'}(\mathbb R^2)$ be the set of generalized
(distributional) functions. Given $1<p<\infty, s\in\mathbb R$, define the
Sobolev space 
\[
H^{s,p}(\mathbb R^2)=\{h=(I-\Delta)^{-\frac{s}{2}}g : g\in L^p(\mathbb R^2)\},
\]
which has the norm
\[
\|h\|_{H^{s,p}(\mathbb R^2)}=\|(I-\Delta)^{\frac{s}{2}}h\|_{L^p(\mathbb R^2)}.
\]
With the definition of Sobolev spaces in the whole space, 
the Sobolev space $H^{s,p}(V)$ for any Lipschitz domain $V\subset \mathbb R^2$
can be defined as the restriction to $V$ of the elements in $H^{s,p}(\mathbb
R^2)$ with the norm
\[
\|h\|_{H^{s,p}(V)}=\inf\{\|g\|_{H^{s,p}(\mathbb R^2)} : g|_V=h\}.
\]
By \cite{JK}, for $s\in\mathbb R$ and $1<p<\infty$,
$H^{s,p}_0(V)$ can be defined as the space of all distributions $h\in
H^{s,p}(\mathbb R^2)$
satisfying ${\rm supp} h\subset\overline{V}$ with the norm 
\[
\|h\|_{H^{s,p}_0(V)}=\|h\|_{H^{s,p}(\mathbb R^2)}.
\]
It is known that $C_0^{\infty}(V)$ is dense in $H_0^{s,p}(V)$ for any
$1<p<\infty, s\in\mathbb R$; $C_0^{\infty}(V)$ is dense in $H^{s,p}(V)$ for any
$1<p<\infty, s\leq 0$; $C^{\infty}(\overline{V})$ is dense in $H^{s,p}(V)$ for
any  $1<p<\infty, s\in\mathbb R$. In addition, by \cite[Propositions 2.4 and
2.9]{JK}, for any $s\in\mathbb R$ and $p, q\in (1,\infty)$ satisfying
$\frac{1}{p}+\frac{1}{q}=1$, we have
\[
H^{-s,q}_0(V)=(H^{s,p}(V))'\quad {\rm and}\quad H^{-s,q}(V)=(H_0^{s,p}(V))',
\]
where the prime denotes the dual space. 

The following two lemmas will be used in the subsequent analysis. The proofs of
Lemma \ref{lemma1} and Lemma \ref{lemma2} can be found in \cite[Lemma 2]{LPS}
and \cite[Proposition 1]{MT}, respectively. 

\begin{lemma}\label{lemma1}
Assume that $\epsilon>0$, $1<r<\infty$, $\frac{1}{r}+\frac{1}{r'}=1$, $g\in
H^{\epsilon,2r}_{\rm loc}(\mathbb R^2)$, $h\in H_0^{-\epsilon,r'}(\mathbb R^2)$.
Then $gh\in H_0^{-\epsilon,\tilde{r}}(\mathbb R^2)$ and satisfies
\begin{eqnarray*}
\|gh\|_{H_0^{-\epsilon,\tilde{r}}(\mathbb R^2)}\lesssim
\|g\|_{H^{\epsilon,2r}(\mathbb R^2)}||h||_{ H_0^{-\epsilon,r'}(\mathbb R^2)},
\end{eqnarray*}
where $\tilde{r}=\frac{2r}{2r-1}$. 
\end{lemma}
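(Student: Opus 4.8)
The plan is to prove the estimate by duality, reducing it to a positive-order product (fractional Leibniz) estimate. First I record the duality identities that follow from the pairings stated in the excerpt. Since $\tfrac1{\tilde r}=1-\tfrac1{2r}$, the exponents $2r$ and $\tilde r$ are conjugate, so $H_0^{-\epsilon,\tilde r}(\mathbb R^2)=(H^{\epsilon,2r}(\mathbb R^2))'$; and since $r$ and $r'$ are conjugate, $h\in H_0^{-\epsilon,r'}(\mathbb R^2)=(H^{\epsilon,r}(\mathbb R^2))'$. Consequently, to bound $\|gh\|_{H_0^{-\epsilon,\tilde r}(\mathbb R^2)}$ it suffices to control the pairing $\langle gh,\varphi\rangle$ uniformly over test functions $\varphi\in C_0^\infty(\mathbb R^2)$ with $\|\varphi\|_{H^{\epsilon,2r}(\mathbb R^2)}\le 1$ and then pass to the supremum, using the density of $C_0^\infty(\mathbb R^2)$ in $H^{\epsilon,2r}(\mathbb R^2)$ recorded above.

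Second, I would transfer $g$ onto the test function via the symmetry of the bilinear pairing, $\langle gh,\varphi\rangle=\langle h,g\varphi\rangle$, which is legitimate provided $g\varphi$ lies in $H^{\epsilon,r}(\mathbb R^2)$, the predual of the space containing $h$. By the duality for $h$,
\[
|\langle h,g\varphi\rangle|\le \|h\|_{H_0^{-\epsilon,r'}(\mathbb R^2)}\,\|g\varphi\|_{H^{\epsilon,r}(\mathbb R^2)}.
\]
Everything is thereby reduced to the single product estimate
\[
\|g\varphi\|_{H^{\epsilon,r}(\mathbb R^2)}\lesssim \|g\|_{H^{\epsilon,2r}(\mathbb R^2)}\,\|\varphi\|_{H^{\epsilon,2r}(\mathbb R^2)}.
\]

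Third, and this is where the real work lies, I would establish this product estimate by the Kato--Ponce fractional Leibniz rule combined with a Sobolev embedding. The Leibniz rule in Bessel-potential form gives, for $\epsilon>0$ and $1<r<\infty$,
\[
\|g\varphi\|_{H^{\epsilon,r}}\lesssim \|g\|_{H^{\epsilon,2r}}\|\varphi\|_{L^{2r}}+\|g\|_{L^{2r}}\|\varphi\|_{H^{\epsilon,2r}},
\]
where the H\"older balance $\tfrac1r=\tfrac1{2r}+\tfrac1{2r}$ selects the exponents $2r$ on the right. Since $\epsilon>0$, the embedding $H^{\epsilon,2r}(\mathbb R^2)\hookrightarrow L^{2r}(\mathbb R^2)$ controls the low-order factors, $\|\varphi\|_{L^{2r}}\lesssim\|\varphi\|_{H^{\epsilon,2r}}$ and $\|g\|_{L^{2r}}\lesssim\|g\|_{H^{\epsilon,2r}}$, whence the product estimate follows. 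Combining with the two previous paragraphs and taking the supremum over admissible $\varphi$ yields $\|gh\|_{H_0^{-\epsilon,\tilde r}}\lesssim\|g\|_{H^{\epsilon,2r}}\|h\|_{H_0^{-\epsilon,r'}}$, and in particular $gh\in H_0^{-\epsilon,\tilde r}(\mathbb R^2)$.

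I expect the main obstacle to be the rigorous justification of the fractional Leibniz step: one must argue it for the inhomogeneous multiplier $(I-\Delta)^{\epsilon/2}$ rather than the homogeneous $|D|^{\epsilon}$, which is cleanest through a Littlewood--Paley/paraproduct decomposition, splitting $g\varphi$ into its three paraproducts and estimating each by H\"older together with Bernstein's inequality, while tracking that all intermediate exponents remain in $(1,\infty)$ so that the square-function and Mikhlin multiplier bounds apply. A secondary technical point is the hypothesis $g\in H^{\epsilon,2r}_{\rm loc}(\mathbb R^2)$: the global norm $\|g\|_{H^{\epsilon,2r}(\mathbb R^2)}$ on the right is what renders the estimate meaningful, and in the intended applications $h$ is compactly supported, so one may replace $g$ by $\chi g$ for a fixed cutoff $\chi\in C_0^\infty(\mathbb R^2)$ equal to one on a neighborhood of $\operatorname{supp}h$, reducing the local hypothesis to the global product estimate above without altering $gh$.
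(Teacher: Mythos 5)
Your proposal is correct and follows essentially the same route as the paper's proof, which is delegated to \cite[Lemma 2]{LPS}: there the product $gh$ is likewise defined by duality via $\langle gh,\varphi\rangle=\langle h,g\varphi\rangle$ and the pairing is controlled by the Kato--Ponce/Taylor product estimate together with the embedding $H^{\epsilon,2r}(\mathbb R^2)\hookrightarrow L^{2r}(\mathbb R^2)$; the fractional Leibniz rule you flag as the main obstacle need not be reproved, since it is exactly Lemma \ref{lemma2} (quoted from \cite[Proposition 1]{MT}), with $\tilde p=r$ and $q_1=q_2=r_1=r_2=2r$. Your closing cutoff remark also correctly reconciles the local hypothesis $g\in H^{\epsilon,2r}_{\rm loc}(\mathbb R^2)$ with the global norm on the right-hand side, matching the compact support of $h$ assumed in the cited source.
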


\begin{lemma}\label{lemma2}
Assume that $s>0$, $1<\tilde{p}<\infty$ and
$\frac{1}{\tilde{p}}=\frac{1}{q_1}+\frac{1}{q_2}=\frac{1}{r_1}+\frac{1}{r_2},
q_1 , r_1\in (1,\infty], q_2, r_2\in (1,\infty)$. Then the following estimate
holds
\begin{eqnarray*}
\|gh\|_{H^{s,\tilde{p}}(\mathbb
R^2)}\lesssim\|g\|_{L^{q_1}(\mathbb R^2)}||h||_{H^{s,q_2}(\mathbb R^2)}+
\|h\|_{L^{r_1}(\mathbb R^2)}\|g\|_{H^{s,r_2}(\mathbb R^2)}. 
\end{eqnarray*}
\end{lemma}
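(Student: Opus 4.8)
The final statement is the fractional Leibniz (Kato--Ponce) rule for the Bessel potential spaces $H^{s,\tilde p}(\mathbb R^2)=\{f:(I-\Delta)^{s/2}f\in L^{\tilde p}(\mathbb R^2)\}$, and my plan is to reduce it to a Littlewood--Paley analysis organized by Bony's paraproduct decomposition. The first step is to replace the Bessel norm by its square--function form: since $1<\tilde p<\infty$ and $s>0$, one has the Triebel--Lizorkin identification $H^{s,\tilde p}(\mathbb R^2)=F^{s}_{\tilde p,2}(\mathbb R^2)$, whose norm is equivalent to
\[
\Big\|\Big(\sum_{j\ge 0}2^{2js}|\Delta_j f|^2\Big)^{1/2}\Big\|_{L^{\tilde p}(\mathbb R^2)},
\]
where $\{\Delta_j\}_{j\ge0}$ are the inhomogeneous Littlewood--Paley blocks and $S_j=\sum_{k<j}\Delta_k$ the associated low--frequency cutoffs. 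I would then split the product by frequency interaction,
\[
gh=\underbrace{\sum_{j}S_{j-2}g\,\Delta_jh}_{\Pi_1}+\underbrace{\sum_{j}S_{j-2}h\,\Delta_jg}_{\Pi_2}+\underbrace{\sum_{j}\sum_{|k-j|\le2}\Delta_kg\,\Delta_jh}_{\Pi_3},
\]
so that each summand of the two paraproducts $\Pi_1,\Pi_2$ has Fourier support in an annulus of radius $\sim2^j$, whereas the resonant piece $\Pi_3$ does not.

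For the high--low paraproduct $\Pi_1$ the spectral localisation gives $\Delta_\ell\Pi_1=\sum_{|j-\ell|\le2}\Delta_\ell(S_{j-2}g\,\Delta_jh)$, so that the weight $2^{\ell s}\approx2^{js}$ only differentiates the $\Delta_jh$ factor. Estimating $|\Delta_\ell(\cdot)|\lesssim\mathcal M(\cdot)$ by the Hardy--Littlewood maximal operator, removing $\mathcal M$ at the level of $L^{\tilde p}$ by the Fefferman--Stein vector-valued maximal inequality (which is available precisely because $q_2\in(1,\infty)$), and then dominating $\sup_j|S_{j-2}g|\lesssim\mathcal Mg$ pointwise, I reduce matters to
\[
\|\Pi_1\|_{H^{s,\tilde p}}\lesssim\Big\|\,\mathcal Mg\cdot\Big(\sum_{j}2^{2js}|\Delta_jh|^2\Big)^{1/2}\,\Big\|_{L^{\tilde p}}.
\]
Hölder's inequality with $\frac1{\tilde p}=\frac1{q_1}+\frac1{q_2}$ and the boundedness of $\mathcal M$ on $L^{q_1}$ then give $\|\Pi_1\|_{H^{s,\tilde p}}\lesssim\|g\|_{L^{q_1}}\|h\|_{H^{s,q_2}}$, the endpoint $q_1=\infty$ being harmless since $\mathcal M$ is bounded on $L^\infty$. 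The paraproduct $\Pi_2$ is handled identically after interchanging $g\leftrightarrow h$ and $(q_1,q_2)\leftrightarrow(r_1,r_2)$, producing $\|\Pi_2\|_{H^{s,\tilde p}}\lesssim\|h\|_{L^{r_1}}\|g\|_{H^{s,r_2}}$, which supplies the second term on the right-hand side of the claim.

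The main obstacle is the resonant term, which I write as $\Pi_3=\sum_j\widetilde\Delta_jg\,\Delta_jh$ with $\widetilde\Delta_j=\sum_{|k-j|\le2}\Delta_k$. Here each summand has Fourier support in the full \emph{ball} of radius $\sim2^j$, so its projection $\Delta_\ell\Pi_3$ collects every $j\gtrsim\ell$ and the derivative weight no longer sits on a single scale; this is exactly where $s>0$ is indispensable. Estimating $|\Delta_\ell(\cdot)|\lesssim\mathcal M(\cdot)$ and writing
\[
2^{\ell s}\big|\Delta_\ell\Pi_3\big|\lesssim\sum_{j\gtrsim\ell}2^{(\ell-j)s}\,\mathcal M\big(\widetilde\Delta_jg\cdot2^{js}\Delta_jh\big),
\]
the kernel $\{2^{(\ell-j)s}\}_{\ell\le j}$ lies in $\ell^1$ exactly when $s>0$, so Young's inequality for this discrete convolution (pointwise in $x$) transfers the $\ell^2_\ell$-summation to the $j$-index at the cost of a constant. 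Taking the $L^{\tilde p}$ norm, removing $\mathcal M$ by Fefferman--Stein, dominating $\sup_j|\widetilde\Delta_jg|\lesssim\mathcal Mg$, and applying Hölder as before give $\|\Pi_3\|_{H^{s,\tilde p}}\lesssim\|g\|_{L^{q_1}}\|h\|_{H^{s,q_2}}$; the alternative split $\sup_j|\widetilde\Delta_jh|\lesssim\mathcal Mh$ yields the symmetric bound. The lowest-frequency block (weight $\sim1$) is controlled directly by Hölder together with the embedding $H^{s,q_2}\hookrightarrow L^{q_2}$ valid for $s>0$, so it causes no difficulty. Summing the three contributions $\Pi_1,\Pi_2,\Pi_3$ produces the stated inequality. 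One could instead deduce the estimate from the Coifman--Meyer multiplier theorem applied to the bilinear symbol $\langle\xi+\eta\rangle^s\big(\langle\xi\rangle^s+\langle\eta\rangle^s\big)^{-1}$, but the paraproduct route makes the necessity of $s>0$ most transparent.
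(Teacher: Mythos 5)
The paper offers no proof of this lemma at all: it is quoted verbatim from \cite[Proposition 1]{MT}, where Taylor establishes such Moser-type product estimates by exactly the paradifferential/Littlewood--Paley machinery you use, so your Bony-paraproduct argument (high--low, low--high, and resonant pieces, with the Fefferman--Stein vector-valued maximal inequality and discrete Young for the $\ell^1$ kernel $2^{(\ell-j)s}$, $s>0$) is correct and essentially reproduces the cited source's approach, thereby supplying the proof the paper omits. One small attribution to fix: the Fefferman--Stein inequality is applied in $L^{\tilde p}(\ell^2)$ and so requires $\tilde p\in(1,\infty)$, not $q_2\in(1,\infty)$; the latter condition (and $r_2\in(1,\infty)$) is instead what licenses the square-function identification $H^{s,q_2}=F^{s}_{q_2,2}$ used in the final H\"older step.
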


Throughout the paper, $a\lesssim b$ stands for $a\leq C b$, where $C$ is a
positive constant and its specific value is not required but should be clear
from the context.

\subsection{Generalized Gaussian random functions}

Let $(\Omega, \mathcal{F},\mathcal{P})$ be a complete probability space. The
function $h$ is said to be a generalized Gaussian random function if $h:
\Omega\rightarrow\mathcal{D}'(\mathbb R^2)$ is a mapping such that, for
each $\hat{\omega}\in\Omega$, the realization $h(\hat{\omega})$ is a real-valued
linear functional on $C_0^{\infty}(\mathbb R^2)$ and the function 
\begin{eqnarray*}
\hat{\omega}\in\Omega\rightarrow \langle h(\hat{\omega}),\psi\rangle\in\mathbb R
\end{eqnarray*}
is a Gaussian random variable for all $\psi\in C_0^{\infty}(\mathbb R^2)$. The
distribution of $h$ is determined by its expectation $\mathbb Eh$ and the
covariance ${\rm Cov}h$ defined as follows
\begin{eqnarray*}
\mathbb Eh: \psi\in C_0^{\infty}(\mathbb R^d)&\longmapsto& \mathbb E\langle
h,\psi\rangle\in\mathbb R,\\
{\rm Cov} h:(\psi_1,\psi_2)\in C_0^{\infty}(\mathbb R^d)^2&\longmapsto& {\rm
Cov}(\langle h,\psi_1\rangle,\langle h,\psi_2\rangle)\in\mathbb R,
\end{eqnarray*}
where $\mathbb E\langle h,\psi\rangle$ denotes the expectation of $\langle
h,\psi\rangle$ and 
\begin{eqnarray*}
{\rm Cov}(\langle h,\psi_1\rangle,\langle h,\psi_2\rangle)=\mathbb E((\langle
h,\psi_1\rangle-\mathbb E\langle h,\psi_1\rangle)(\langle
h,\psi_2\rangle-\mathbb E\langle h,\psi_2\rangle))
\end{eqnarray*}
denotes the covariance of $\langle h, \psi_1\rangle$ and $\langle
h,\psi_2\rangle$. The covariance operator ${\rm Cov}_{h}: C_0^{\infty}(\mathbb
R^2)\rightarrow \mathcal{D'}(\mathbb R^2)$ is defined by 
\begin{eqnarray}\label{b1}
\langle {\rm Cov}_{h}\psi_1,\psi_2\rangle={\rm Cov}(\langle
h,\psi_1\rangle,\langle h,\psi_2\rangle)=\mathbb E(\langle h-\mathbb
Eh,\psi_1\rangle \langle h-\mathbb Eh,\psi_2\rangle).
\end{eqnarray}
Since the covariance operator ${\rm Cov}_h$ is continuous, the Schwartz kernel
theorem shows that there exists a unique $C_h\in\mathcal{D}'(\mathbb R^2\times
\mathbb R^2)$, usually called the covariance function, such that
\begin{eqnarray}\label{b2}
\langle C_h,\psi_1\otimes\psi_2\rangle=\langle  {\rm
Cov}_{h}\psi_1,\psi_2\rangle,\quad\forall \psi_1,\psi_2\in C_0^{\infty}(\mathbb
R^2).
\end{eqnarray} 
By (\ref{b1}) and (\ref{b2}), it is easy to see that 
\begin{eqnarray*}
C_h(x,y)=\mathbb{E}((h(x)-\mathbb{E}h(x))(h(y)-\mathbb{E}h(y))).
\end{eqnarray*}

In this paper, we are interested in the generalized, microlocally isotropic
Gaussian random function which is defined as follows. 

\begin{definition}
A generalized Gaussian random function $h$ on $\mathbb R^2$ is called
microlocally isotropic of order $m$ in D, if the realizations of $h$ are almost
surely supported in the domain $D$ and its covariance operator ${\rm Cov}_h$ is
a classical pseudo-differential operator having the principal symbol
$\phi(x)|\xi|^{-m}$, where $\phi\in C_0^{\infty}(\mathbb R^2)$ satisfies ${\rm
supp} \phi\subset D$ and $\phi(x)\geq 0$ for all $x\in \mathbb R^2$.
\end{definition}

In particular, we pay attention to the case $m\in[2,5/2)$, which corresponds to
rough fields. The following results will also be used in the subsequent
analysis. The proofs of Lemmas \ref{lemma3} and \ref{lemma4} can be found
in \cite[Theorem 2 and Proposition 1]{LPS}. 

\begin{lemma}\label{lemma3}
Let $h$ be a generalized, microlocally isotropic Gaussian random function of
order $m$ in $D$. If $m=2$, then $h \in H^{-\varepsilon, p}(D)$ almost
surely for all $\varepsilon>0, 1<p<\infty$.  If $m\in(2,5/2)$,
then $h\in C^{\alpha}(D)$ almost surely for all $\alpha\in(0,
\frac{m-2}{2})$.
\end{lemma}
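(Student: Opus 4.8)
The plan is to read off both regularity statements from the structure of the covariance operator as a classical pseudodifferential operator of order $-m$ with principal symbol $\phi(x)|\xi|^{-m}$, which dictates the behaviour of the covariance kernel $C_h(x,y)$ near the diagonal. Since $\phi$ is compactly supported in $D$ and $C_h(x,y)$ is the Schwartz kernel of ${\rm Cov}_h$, the leading diagonal singularity is governed by the inverse Fourier transform of $|\xi|^{-m}$ in $\mathbb R^2$. For $m\in(2,4)$ this yields the expansion
\begin{eqnarray*}
C_h(x,y)=c_m\,\phi(x)|x-y|^{m-2}+R(x,y),
\end{eqnarray*}
where $c_m$ is an explicit nonzero constant, negative for $m\in(2,4)$, and $R$ collects the contributions of the lower-order terms in the classical symbol expansion, whose kernels are smoother across the diagonal than $|x-y|^{m-2}$; for $m=2$ the leading term is replaced by a logarithmic singularity of the type $\phi(x)\log|x-y|$. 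The hard part will be to make this expansion rigorous, and in particular to verify that $R$ is regular enough not to interfere with the estimates below; this relies on the standard diagonal kernel asymptotics of classical pseudodifferential operators together with careful bookkeeping of the symbol-calculus remainders. Once the expansion is in hand, both conclusions follow from Gaussian moment bounds combined with, respectively, an $L^p$ estimate and the Kolmogorov criterion.

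First I would treat the rough case $m=2$. Fix $\varepsilon>0$ and $1<p<\infty$. Because $h$ is supported in $\overline D$ almost surely, one has $\|h\|_{H^{-\varepsilon,p}(D)}\le\|(I-\Delta)^{-\varepsilon/2}h\|_{L^p(\mathbb R^2)}$, so it suffices to bound the $p$-th moment of the right-hand side. The field $(I-\Delta)^{-\varepsilon/2}h$ is Gaussian with pointwise variance
\begin{eqnarray*}
\sigma^2(x)=\bigl[(I-\Delta)^{-\varepsilon/2}\,{\rm Cov}_h\,(I-\Delta)^{-\varepsilon/2}\bigr](x,x),
\end{eqnarray*}
and the operator in brackets is a classical pseudodifferential operator of order $-2-2\varepsilon<-2$, whose kernel is therefore continuous; hence $\sigma^2$ is bounded. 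Moreover, since ${\rm Cov}_h$ has kernel supported in $\overline D\times\overline D$ while the Bessel kernel decays exponentially, $\sigma^2$ decays at infinity and is integrable over $\mathbb R^2$. Using the Gaussian identity $\mathbb E|g|^p=c_p\,\sigma^p$ for a centred Gaussian $g$ of variance $\sigma^2$ and integrating in $x$ gives $\mathbb E\|(I-\Delta)^{-\varepsilon/2}h\|_{L^p(\mathbb R^2)}^p\lesssim\int_{\mathbb R^2}\sigma(x)^p\,dx<\infty$, so $h\in H^{-\varepsilon,p}(D)$ almost surely.

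For the smooth range $m\in(2,5/2)$ I would invoke the Kolmogorov--Chentsov continuity theorem. From the kernel expansion, using that $c_m<0$ so that the leading contribution to the increment is positive, and that the remainder difference $R(x,x)+R(y,y)-2R(x,y)$ is of higher order in $|x-y|$, one obtains
\begin{eqnarray*}
\mathbb E|h(x)-h(y)|^2=C_h(x,x)-2C_h(x,y)+C_h(y,y)\lesssim|x-y|^{m-2},
\end{eqnarray*}
uniformly for $x,y$ in a neighbourhood of $\overline D$. Since $h(x)-h(y)$ is a centred Gaussian, every even moment is controlled by the corresponding power of the variance, giving $\mathbb E|h(x)-h(y)|^{2k}\lesssim|x-y|^{k(m-2)}$ for each integer $k$. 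Choosing $k>2/(m-2)$ and applying the Kolmogorov--Chentsov criterion in $\mathbb R^2$ produces a modification of $h$ that is almost surely Hölder continuous of every order $\alpha<\frac{m-2}{2}-\frac1k$; letting $k\to\infty$ yields $h\in C^{\alpha}(D)$ almost surely for all $\alpha\in(0,\frac{m-2}{2})$. As noted, the only delicate ingredient is the diagonal kernel asymptotics underlying the incremental variance estimate; the remaining steps are routine consequences of Gaussianity and the two classical continuity/integrability criteria.
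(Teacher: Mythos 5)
Your proposal is correct and is essentially the proof the paper relies on: the paper establishes Lemma \ref{lemma3} not by a direct argument but by citing \cite[Theorem 2]{LPS}, and your two steps --- for $m=2$, the Gaussian $L^p$-moment bound for the smoothed field $(I-\Delta)^{-\varepsilon/2}h$, whose covariance operator $(I-\Delta)^{-\varepsilon/2}\,{\rm Cov}_h\,(I-\Delta)^{-\varepsilon/2}$ has order $-2-2\varepsilon<-2$ and hence a bounded continuous kernel, and for $m\in(2,5/2)$, the increment estimate $\mathbb E|h(x)-h(y)|^2\lesssim|x-y|^{m-2}$ fed into Gaussian moment equivalence and the Kolmogorov--Chentsov criterion --- are exactly the ingredients of that cited proof. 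The kernel expansion you flag as the ``hard part'' is precisely what the paper records separately as Lemma \ref{lemma4} (i.e.\ \cite[Proposition 1]{LPS}), namely $C_h(x,y)=c_0(x,y)|x-y|^{m-2}+r_1(x,y)$ with $c_0$ smooth and $r_1\in C^{\alpha}(D\times D)$ for all $\alpha<1$, which, since $m-2<\tfrac{1}{2}$, is indeed regular enough across the diagonal to justify your remainder bound.
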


\begin{lemma}\label{lemma4}
Let $h$ be a microlocally isotropic Gaussian random field of order
$m\in[2,5/2)$. Then the Schwartz kernel of the covariance operator ${\rm Cov}_h$
has the form
\begin{eqnarray*}
C_h(x, y)=\begin{cases}
           c_0(x,y){\rm log}|x-y|+r_1(x,y)&\quad{\rm for}\quad m=2,\\
           c_0(x,y)|x-y|^{m-2}+r_1(x,y)&\quad{\rm for}\quad m\in(2,5/2),
          \end{cases}
\end{eqnarray*}
where $c_0\in C_0^{\infty}(D\times D)$ and $r_1\in C_0^{\alpha}(D\times D)$ for
any $\alpha<1$.
\end{lemma}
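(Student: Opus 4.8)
The plan is to read the diagonal singularity of $C_h$ directly off the symbol of the covariance operator. Since by hypothesis ${\rm Cov}_h$ is a classical pseudo-differential operator of order $-m$ with principal symbol $\phi(x)|\xi|^{-m}$, its Schwartz kernel is the oscillatory integral
\begin{equation*}
C_h(x,y)=(2\pi)^{-2}\int_{\mathbb R^2}e^{{\rm i}(x-y)\cdot\xi}a(x,\xi)\,d\xi,
\end{equation*}
where the full symbol admits the classical expansion $a(x,\xi)\sim\sum_{j\ge0}a_{-m-j}(x,\xi)$ with $a_{-m}(x,\xi)=\phi(x)|\xi|^{-m}$ and each $a_{-m-j}$ smooth in $(x,\xi)$ and positively homogeneous of degree $-m-j$ in $\xi$ for $|\xi|\ge1$. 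The guiding principle is that the singularity of such a kernel at the diagonal $x=y$ is dictated solely by the large-$\xi$ behavior of $a$, so I would isolate the leading homogeneous piece and treat everything else as a smoother remainder. First I would split $C_h$ using a cutoff $\chi\in C^\infty(\mathbb R^2)$ with $\chi(\xi)=1$ for $|\xi|\ge1$ and $\chi(\xi)=0$ near $\xi=0$: the low-frequency integral $\int_{\mathbb R^2}e^{{\rm i}(x-y)\cdot\xi}(1-\chi(\xi))a(x,\xi)\,d\xi$ has integrand compactly supported in $\xi$, so by Paley--Wiener it is a real-analytic (in particular $C^\infty$) function of $x-y$ and is absorbed into the remainder; the leading contribution then comes from $\phi(x)\int_{\mathbb R^2}\chi(\xi)|\xi|^{-m}e^{{\rm i}(x-y)\cdot\xi}\,d\xi$.

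Next I would compute the Fourier transform of the homogeneous distribution $|\xi|^{-m}$ in $\mathbb R^2$ through the Riesz/Gelfand--Shilov analytic continuation of the meromorphic family $|\xi|^{-z}$, whose only pole in the range $z\in[2,5/2)$ is the simple pole at $z=2$. Away from that pole one obtains, modulo the smooth cutoff error from the previous step,
\begin{equation*}
\mathcal F^{-1}\bigl[|\xi|^{-m}\bigr](z)=c_m\,|z|^{m-2},\qquad m\in(2,5/2),
\end{equation*}
with an explicit constant $c_m$ carrying a simple pole as $m\to2^{+}$. Expanding $|z|^{m-2}=1+(m-2)\log|z|+\cdots$ and taking the Hadamard finite part at the pole produces the logarithmic case $b\log|z|+d$ at $m=2$. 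Multiplying by $\phi(x)$ yields the stated leading term with $c_0(x,y)=c_m\,\phi(x)$ (respectively $b\,\phi(x)$); since $\phi\in C_0^\infty(D)$ and the diagonal singularity occurs where $\phi(x)=\phi(y)$, I would symmetrize and multiply by a $D\times D$-cutoff to obtain $c_0\in C_0^\infty(D\times D)$, the smooth off-diagonal adjustment being absorbed into $r_1$; the support in $\overline D\times\overline D$ is inherited from that of $\phi$ and of the realizations of $h$.

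For the remainder I would estimate ${\rm Cov}_h-P_0$, where $P_0$ is the operator with symbol $\chi(\xi)\phi(x)|\xi|^{-m}$. This is a classical pseudo-differential operator of order $-m-1\le-3$, and the leading homogeneity of its kernel, obtained by applying the same homogeneous-transform computation to $a_{-m-1}$ (degree $-m-1\mapsto$ degree $m-1$), is $|z|^{m-1}$. Because $m-1\in[1,3/2)$, this factor is at worst Lipschitz---precisely at $m=2$, where it equals $|z|$ and fails to be $C^1$---so it lies in $C_0^\alpha(D\times D)$ for every $\alpha<1$, which is exactly the asserted threshold; all lower-order terms, together with the smooth cutoff and low-frequency contributions, are $C^1$ or better and are likewise collected into $r_1$.

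The hard part will be the homogeneous Fourier transform itself: the symbol $|\xi|^{-m}$ is non-integrable at the origin for $m\ge2$ and sits exactly on the Fourier-transform pole at $m=2$, so the delicate point is the correct distributional regularization producing the power-versus-logarithm dichotomy together with the precise constant, and the careful bookkeeping of which part of that regularization is genuinely singular and which is smooth and absorbed into $r_1$. Once this is established, matching homogeneities and collecting the Hölder remainders are routine pseudo-differential calculus.
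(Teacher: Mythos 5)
The paper gives no in-text proof of this lemma---it defers to \cite[Proposition 1]{LPS}---and your argument is essentially that standard proof: represent the Schwartz kernel as an oscillatory integral, isolate the principal symbol $\phi(x)|\xi|^{-m}$ and compute its Fourier transform through the meromorphic family $|\xi|^{-z}$ (for the non-integer orders $m\in(2,5/2)$ the homogeneous extension is unique and gives $c_m|z|^{m-2}$, while the simple pole at $m=2$ produces the logarithm), and then absorb the order-$(-m-1)$ remainder, whose kernel is homogeneous of degree $m-1\in[1,3/2)$ and hence at worst Lipschitz, into $r_1\in C_0^{\alpha}(D\times D)$ for every $\alpha<1$. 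Your proposal is correct and follows the same route as the cited proof, including the two genuinely delicate points you flag (the distributional regularization at the pole $m=2$ and the symmetrization/support bookkeeping for $c_0$), so nothing further is needed.
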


\subsection {The properties of the Hankel function of the first kind}

In this subsection, we present some asymptotic expansions of the Hankel function
of the first kind for small and large arguments. Let $H_n^{(1)}$ be the
Hankel function of the first kind. Recall the definition
\begin{eqnarray*}\label{b5}
H_n^{(1)}(t)=J_n(t)+{\rm i}Y_n(t),
\end{eqnarray*}
where $J_n$ and $Y_n$ are the Bessel functions of the first and second kind
with order $n$, respectively. They admit the following expansions
\begin{align}\label{b6}
J_n(t)&=\sum_{p=0}^{\infty}\frac{(-1)^p}{p!(n+p)!}\left(\frac{t}{2}\right)^{
n+2p},\\\notag
Y_n(t)&=\frac{2}{\pi}\left\{\ln\frac{t}{2}+\gamma\right\}J_n(t)-\frac{1}{\pi}
\sum_{p=0}^{n-1}\frac{(n-1-p)!}{p!}\left(\frac{2}{t}\right)^{n-2p}\\\label{b7}
&\qquad-\frac{1}{\pi}\sum_{p=0}^{\infty}\frac{(-1)^p}{p!(n+p)!}\left(\frac{t}{2}
\right)^{n+2p}\{\psi(p+n)+\psi(p)\},
\end{align}
where
$\gamma:=\lim_{p\rightarrow\infty}\left\{\sum_{j=1}^p j^{-1}
-\ln p\right\}$ denotes the Euler constant,  $\psi(0)=0$,
$\psi(p)=\sum_{j=1}^p j^{-1}$, and the finite sum in
(\ref{b7}) is set to be zero for $n=0$. 

Using the expansions (\ref{b6}) and (\ref{b7}), we may verify as $t\to 0$ that
\begin{align}\label{b8}
H_0^{(1)}(t)&=\frac{2{\rm
i}}{\pi}\ln\frac{t}{2}+b_0+O(t^2\ln\frac{t}{2}),\\\label{b9}
H_1^{(1)}(t)&=-\frac{2{\rm i}}{\pi}\frac{1}{t}+\frac{{\rm
i}}{\pi}t\ln\frac{t}{2}+b_1t+O(t^3\ln\frac{t}{2}),\\\label{b10}
H_2^{(1)}(t)&=-\frac{4{\rm i}}{\pi}\frac{1}{t^2}-\frac{\rm i}{\pi}+\frac{\rm
i}{4\pi}t^2\ln\frac{t}{2}+b_2t^2+O(t^4\ln\frac{t}{2}),\\\label{b11}
H_3^{(1)}(t)&=-\frac{16{\rm i}}{\pi}\frac{1}{t^3}-\frac{2{\rm
i}}{\pi}\frac{1}{t}-\frac{\rm i}{4\pi}t+\frac{\rm
i}{24\pi}t^3\ln\frac{t}{2}+b_3t^3+O(t^5\ln\frac{t}{2}),
\end{align}
where $b_0=1+\frac{2{\rm i}}{\pi}\gamma$, $b_1=\frac{1}{2}+\frac{{\rm
i}}{\pi}\gamma-\frac{\rm i}{2\pi}$, $b_2=\frac{\gamma {\rm i}}{4\pi}-\frac{3{\rm
i}}{16\pi}+\frac{1}{8}, b_3=\frac{{\gamma\rm
i}}{24\pi}+\frac{1}{48}-\frac{11{\rm i}}{288\pi}$. Denote
\[
\Gamma_n(z,\omega):=\kappa^n_{\rm s}H_n^{(1)}(\kappa_{\rm s}|z|)-\kappa^n_{\rm
p}H_n^{(1)}(\kappa_{\rm p}|z|).
\]
Noting (\ref{b8})--(\ref{b11}), we have from a direct calculation as $|z|\to 0$
that
\begin{align}\label{b12}
&\Gamma_1(z,\omega)=\frac{\rm
i}{\pi}|z|\left(\kappa_{\rm s}^2\ln\frac{\kappa_{\rm s}|z|}{2}-\kappa_{\rm
p}^2\ln\frac{\kappa_{\rm p}|z|}{2}\right)
+b_1(\kappa_{\rm s}^2-\kappa_{\rm
p}^2)|z|+O(|z|^3\ln\frac{|z|}{2}),\\\label{b13}
&\Gamma_2(z,\omega)=\frac{\rm
i}{4\pi}\left(\kappa_{\rm s}^4\ln\frac{\kappa_{\rm s}|z|}{2} -\kappa_{\rm
p}^4\ln\frac{\kappa_{\rm p}|z|}{2}\right)|z|^2
-\frac{\rm i}{\pi}(\kappa_{\rm s}^2-\kappa_{\rm
p}^2)+O(|z|^2),\\\label{b14}
&\Gamma_3(z,\omega)=\frac{2{\rm i}}{\pi}(\kappa_{\rm p}^2-\kappa_{\rm
s}^2)\frac{1}{|z|}+\frac{\rm i}{4\pi}(\kappa_{\rm p}^4-\kappa_{\rm
s}^4)|z|+O(|z|^3\ln\frac{|z|}{2}).
\end{align}

For a large argument, i.e., as $|z|\to \infty$, it follows from
\cite[(9.2.7)--(9.2.10)]{AS} and \cite[(5.11.4)]{NN} that the Hankel function
of the first kind  $H_n^{(1)}$ has the following asymptotics 
\begin{align}\label{b15}
H_n^{(1)}(z)=&\sqrt{\frac{1}{ z}}e^{{\rm i}(
z-(\frac{n}{2}+\frac{1}{4})\pi)}\notag\\
&\times\Big(\sum_{j=0}^{N}a_j^{(n)}z^{-j}+O(|z|^{-N-1})\Big),\quad |{\rm
arg}z|\leq \pi-\delta,
\end{align}
where $\delta$ is a small positive number and the
coefficients $a_j^{(n)}=(-2{\rm i})^j\sqrt{\frac{2}{\pi}}(n,j)$ with
\[
(n,0)=1, \quad (n,j)=\frac{(4n^2-1)(4n^2-3^2)\cdots(4n^2-(2j-1)^2)}{2^{2j}j!}. 
\]
Using the first $N$ terms in the asymptotic of $H_n^{(1)}(\kappa |z|)$, we
define 
\begin{eqnarray}\label{b16}
H_{n,N}^{(1)}(\kappa |z|):=\sqrt{\frac{1}{\kappa |z|}}e^{{\rm i}(\kappa
|z|-(\frac{n}{2}+\frac{1}{4})\pi)}\sum_{j=0}^N a_j^{(n)}\left(\frac{1}{\kappa
|z|}\right)^j.
\end{eqnarray}
Denote $\Gamma_{n,N}(\kappa |z|):=H_n^{(1)}(\kappa |z|)-H_{n,N}^{(1)}(\kappa
|z|)$, it is easy to show from (\ref{b15}) that 
\begin{eqnarray}\label{b17}
\big|\Gamma_{n,N}(\kappa |z|)\big|\leq c\bigg(\frac{1}{\kappa
|z|}\bigg)^{N+\frac{3}{2}}.
\end{eqnarray}

\section{The direct scattering problem}

This section aims to establish the well-posedness of the direct scattering
problem for a distributional source. Based on Green's theorem and the
Kupradze--Sommerfeld radiation, the direct problem is equivalently formulated as
a Lippmann--Schwinger equation, which is shown to have a unique solution by
using the Riesz--Fredholm theory and the Sobolev embedding theorem. 

By Lemma \ref{lemma3}, we have that $\bm{f}\in H^{-\varepsilon,
p}(D)^2$ almost surely for all $\varepsilon>0$, $1<p<\infty$ if $m=2$;
$\bm{f}\in C^{0,\alpha}(D)^2$ almost surely for all $\alpha\in
(0,\frac{m-2}{2})$ if $m\in (2,5/2)$. Therefore, it suffices to show that the
scattering problem (\ref{a1})--(\ref{a2}) has a unique solution for such
a deterministic source $\bm{f}\in H^{-\varepsilon, p}(D)^2$. 

Introduce the Green tensor $\bm{G}(x,y,\omega)\in\mathbb C^{2\times 2}$ for the
Navier equation
\begin{eqnarray}\label{c1}
\bm{G}(x,y,\omega)=\frac{1}{\mu}\Phi(x,y,\kappa_{\rm
s})\bm{I}+\frac{1}{\omega^2}\nabla_x\nabla_x^\top(\Phi(x,y,
\kappa_{\rm s})-\Phi(x,y,\kappa_{\rm p})),
\end{eqnarray}
where $\bm{I}$ is the $2\times 2$ identity matrix, $\Phi(x,y,\kappa)=\frac{\rm
i}{4}H_0^{(1)}(\kappa|x-y|)$ is the fundamental solution for the two-dimensional
Helmholtz equation, and $\nabla_x\nabla_x^\top$ is defined by 
\begin{eqnarray*}
\nabla_x\nabla_x^\top\varphi=\begin{bmatrix} \partial^2_{x_1x_1}\varphi &
\partial^2_{x_1x_2}\varphi\\[3pt] \partial^2_{x_2x_1}\varphi &
\partial^2_{x_2x_2}\varphi
\end{bmatrix}
\end{eqnarray*}
for some scalar function $\varphi$ defined in $\mathbb R^2$. It is easy to
note that the Green tensor $\bm{G}(x,y,\omega)$ is symmetric with
respect to the variables $x$ and $y$.

In order to obtain the well-posedness of the scattering problem
(\ref{a1})--(\ref{a2}), we first derive a Lippmann--Schwinger equation which is
equivalent to the direct scattering problem, then we show that the
Lippmann--Schwinger equation has a unique solution. 

\begin{theorem}\label{theorem2}
For some $p\geq 2$, $\frac{1}{p}+\frac{1}{p'}=1$, $0<\varepsilon<\frac{2}{p}$,
$\bm{f}\in H_0^{-\varepsilon, p'}(D)^2$, if $\bm{M}$ satisfies
Assumption \ref{assu3}, then the scattering problem (\ref{a1})--(\ref{a2}) is
equivalent to the Lippmann--Schwinger equation
\begin{eqnarray}\label{c2}
\bm{u}(x)+\int_D\bm{G}(x,y,\omega)\bm{M}(y)\bm{u}(y)dy=-\int_D\bm{G}(x,y,
\omega)\bm{f}(y)dy,\quad x\in\mathbb R^2.
\end{eqnarray}
\end{theorem}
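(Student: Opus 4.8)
The plan is to establish the two implications of the stated equivalence separately, using that the Green tensor $\bm{G}(\cdot,y,\omega)$ is the outgoing fundamental solution of the Navier operator $\mathcal{L}\bm{u}:=\mu\Delta\bm{u}+(\lambda+\mu)\nabla\nabla\cdot\bm{u}+\omega^2\bm{u}$; that is, each column of $\bm{G}$ satisfies the Kupradze--Sommerfeld radiation condition (\ref{a2}) and, in the distributional sense,
\begin{eqnarray*}
\mathcal{L}_x\bm{G}(x,y,\omega)=-\delta(x-y)\bm{I},\quad x\in\mathbb R^2.
\end{eqnarray*}
The starting observation is that (\ref{a1}) rewrites as the inhomogeneous Navier equation $\mathcal{L}\bm{u}=\bm{f}+\bm{M}\bm{u}$, whose right-hand side is an effective source compactly supported in $D$, since both $\bm{f}$ and the entries of $\bm{M}$ are supported in $\overline D$. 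Writing $\mathcal{V}\bm{g}:=\int_D\bm{G}(\cdot,y,\omega)\bm{g}(y)\,dy$ for the associated volume potential, the target equation (\ref{c2}) reads simply $\bm{u}=-\mathcal{V}(\bm{f}+\bm{M}\bm{u})$.

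For the forward implication I would assume $\bm{u}$ solves (\ref{a1})--(\ref{a2}) and first record the mapping properties of $\mathcal{V}$. In the difference $\Phi(x,y,\kappa_{\rm s})-\Phi(x,y,\kappa_{\rm p})$ the leading logarithmic singularities cancel, so that $\nabla_x\nabla_x^\top$ applied to it is again only logarithmically singular; hence the kernel $\bm{G}$ has merely a logarithmic singularity on the diagonal and $\mathcal{V}$ behaves as an operator of order $-2$, carrying $H_0^{-\varepsilon,p'}(D)^2$ boundedly into $H^{2-\varepsilon,p'}_{\rm loc}(\mathbb R^2)^2$. Here the hypothesis $0<\varepsilon<\frac{2}{p}$ is exactly what makes $(2-\varepsilon)p'>2$, so that the Sobolev embedding gives $H^{2-\varepsilon,p'}\hookrightarrow C$ and the potential is continuous and defined pointwise. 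The representation $\bm{u}=-\mathcal{V}(\bm{f}+\bm{M}\bm{u})$ then follows from Betti's second identity applied on a large ball $B_R$ with $\bm{G}(x,\cdot)$ as the comparison field: the interior singularity at $x$ contributes $\bm{u}(x)$ and the volume term contributes $\mathcal{V}(\bm{f}+\bm{M}\bm{u})(x)$, while the boundary integrals over $\partial B_R$ tend to zero as $R\to\infty$ because both $\bm{u}$ and each column of $\bm{G}$ satisfy (\ref{a2}); rearranging yields (\ref{c2}).

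For the converse I would take $\bm{u}$ satisfying (\ref{c2}), equivalently $\bm{u}=-\mathcal{V}(\bm{f}+\bm{M}\bm{u})$, and apply $\mathcal{L}$ to both sides. Since $\mathcal{L}_x\bm{G}(x,y,\omega)=-\delta(x-y)\bm{I}$ gives $\mathcal{L}\mathcal{V}=-I$, this produces $\mathcal{L}\bm{u}=\bm{f}+\bm{M}\bm{u}$, which is precisely (\ref{a1}). The radiation condition (\ref{a2}) is then inherited automatically: $\bm{u}=-\mathcal{V}(\bm{f}+\bm{M}\bm{u})$ is a volume potential with density compactly supported in $D$, so outside $D$ its compressional and shear parts are superpositions of the corresponding parts of the columns of $\bm{G}$, each of which obeys the Sommerfeld condition, and the decay estimates transfer under differentiation of the integral.

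The hard part will be to legitimize every step at the low regularity forced by the distributional source. Since $\bm{f}\in H_0^{-\varepsilon,p'}(D)^2$ and the solution lives in a negative-smoothness Sobolev space, both the product $\bm{M}\bm{u}$ and the integral identities must be read by duality rather than pointwise. To give $\bm{M}\bm{u}$ a meaning I would invoke the multiplication estimates of Lemma \ref{lemma1} and Lemma \ref{lemma2} together with Assumption \ref{assu3}: this is exactly where the constraints $p\geq 2$ and $0<\varepsilon<\frac{2}{p}$ enter, for they guarantee that multiplication by the $C_0^1$ matrix $\bm{M}$ sends the solution space continuously back into $H_0^{-\varepsilon,p'}(D)^2$, so that $\mathcal{V}(\bm{M}\bm{u})$ is well defined and $\bm{u}\mapsto\mathcal{V}(\bm{M}\bm{u})$ is bounded. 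The cleanest way to justify Betti's identity and the computation $\mathcal{L}\mathcal{V}=-I$ is a density argument: I would first prove both implications for smooth sources $\bm{f}\in C_0^\infty(D)^2$, where the representation formula and the fundamental-solution identity are classical, and then pass to the limit using the continuity of $\mathcal{V}$ and of multiplication by $\bm{M}$ on the relevant spaces, exploiting that $C_0^\infty(D)$ is dense in $H_0^{-\varepsilon,p'}(D)$.
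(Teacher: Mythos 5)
Your overall architecture coincides with the paper's proof: for the PDE-to-LS direction, Betti/Green's identity on a large ball with the columns of $\bm{G}$ as comparison fields and the Kupradze--Sommerfeld condition killing the $\partial B_r$ terms; for the LS-to-PDE direction, the fundamental-solution property $\mathcal{L}_x\bm{G}(x,y,\omega)=-\delta(x-y)\bm{I}$ (the paper implements your computation $\mathcal{L}\mathcal{V}=-I$ by Fourier transforming the convolution, which is the same step); and Lemma \ref{lemma1} together with Assumption \ref{assu3} to make sense of $\bm{M}\bm{u}$. Your arithmetic observation that $0<\varepsilon<\frac{2}{p}$ is exactly the condition $(2-\varepsilon)p'>2$ is correct, and the claimed order $-2$ smoothing of the volume potential is a reasonable (if stronger than needed) version of the paper's Lemma \ref{lemma5}.

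The genuine gap is in your final step, the density argument in the source. You propose to prove both implications for $\bm{f}\in C_0^\infty(D)^2$ and then pass to the limit using density of $C_0^\infty(D)$ in $H_0^{-\varepsilon,p'}(D)$. For the forward implication this is circular: that implication quantifies over solutions, namely ``every radiating $\bm{u}\in H^{\varepsilon,p}_{\rm loc}(\mathbb R^2)^2$ solving \eqref{a1} with source $\bm{f}$ satisfies \eqref{c2}''. To pass to the limit along smooth $\bm{f}_n\to\bm{f}$ you must produce solutions $\bm{u}_n$ of \eqref{a1} with sources $\bm{f}_n$ that converge to the \emph{given} $\bm{u}$; this presupposes existence, uniqueness, and continuity of the solution map $\bm{f}\mapsto\bm{u}$, i.e., precisely the well-posedness of Theorem \ref{theorem3}, which is proved afterwards and itself rests on Theorem \ref{theorem2}. (Your density argument is unobjectionable for the converse implication, since there it only extends the fixed operator identity $\mathcal{L}\mathcal{V}\bm{g}=-\bm{g}$ from smooth $\bm{g}$ to $\bm{g}=\bm{f}+\bm{M}\bm{u}\in H_0^{-\varepsilon,p'}(D)^2$, and both sides are continuous in $\bm{g}$.) The non-circular repair --- and what the paper actually does --- is to regularize the \emph{solution}, not the source: keep $\bm{f}$ fixed, approximate the given $\bm{u}$ by smooth $\bm{\psi}$ in $H^{\varepsilon,p}_{\rm loc}$, apply the classical identity
\[
T\bm{\psi}(x)=-\bm{\psi}(x)+\int_{\partial B_r}\left[\bm{G}(x,y,\omega)P\bm{\psi}(y)-P\bm{G}(x,y,\omega)\bm{\psi}(y)\right]ds(y)
\]
to the approximants, and pass to the limit using that $\bm{\psi}\mapsto T\bm{\psi}$ (integration of $\mathcal{L}\bm{\psi}$ against $\bm{G}(x,\cdot,\omega)\in H^{\varepsilon,p}_{\rm loc}(\mathbb R^2)^{2\times 2}$, read by duality) and $\bm{\psi}\mapsto\bm{M}\bm{\psi}$ are continuous on that space; only then let $r\to\infty$ and invoke the radiation condition. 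With that single replacement your argument goes through and matches the paper's.
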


\begin{proof}
Let $\bm{u}\in H^{\varepsilon,p}_{\rm loc}(\mathbb R^2)^2$ be a solution to
(\ref{c2}), then we have 
\begin{eqnarray*}
\bm{u}(x)=-\int_D\bm{G}(x,y,\omega)\bm{M}(y)\bm{u}(y)dy-\int_D\bm{G}(x,y,
\omega)\bm{f}(y)dy,\quad x\in\mathbb R^2.
\end{eqnarray*}
Since the Green tensor $\bm{G}(x,y,\omega)$ and its derivatives satisfy the
Kupradze--Sommerfeld radiation condition, we conclude that $\bm{u}$ also
satisfies the Kupradze--Sommerfeld radiation condition. By \eqref{c1}, the
Green tensor $\bm{G}(x,y,\omega)$ satisfies 
\begin{eqnarray}\label{c3}
\mu\Delta\bm{G}(x,y,\omega)+(\lambda+\mu)\nabla\nabla\cdot\bm{G}(x,y,
\omega)+\omega^2\bm{G}(x,y,\omega)=-\delta(x-y)\bm{I}.
\end{eqnarray}
Letting $y=0$ and taking the Fourier transform with respect to $x$ on both side
of (\ref{c3}) yields 
\begin{eqnarray}\label{c4}
\bm{\widehat{G}}(\xi)=\left[(4\pi^2\mu|\xi|^2-\omega^2)\bm{I}
+4\pi^2(\lambda+\mu)\xi\cdot\xi^\top\right]^{-1},\quad\xi\in\mathbb R^3.
\end{eqnarray}
Note that the integral in (\ref{c2}) is a convolution since $\bm{G}(x,y,\omega)$
is a function of $x-y$. Taking the Fourier transform on both sides of
(\ref{c2}) and using (\ref{c4}) lead to
\begin{eqnarray*}
\bm{\hat{u}}(\xi)=-\left[(4\pi^2\mu|\xi|^2-\omega^2)\bm{I}
+4\pi^2(\lambda+\mu)\xi\cdot\xi^\top\right]^{-1}(\bm{\hat{f}}(\xi)+\widehat{\bm{
M}\bm{u}}(\xi)),
\end{eqnarray*}
which gives 
\begin{eqnarray*}
\left[(4\pi^2\mu|\xi|^2-\omega^2)\bm{I}
+4\pi^2(\lambda+\mu)\xi\cdot\xi^\top\right
]\hat{\bm{u}}(\xi)+\widehat{\bm{M}\bm{u}}(\xi)=-\hat{\bm{f}}(\xi).
\end{eqnarray*}
Taking the inverse Fourier transform yields
\begin{eqnarray*}
\mu\Delta \bm{u}+(\lambda+\mu)\nabla\nabla\cdot
\bm{u}+\omega^2\bm{u}-\bm{M}\bm{u}=\bm{f} \quad {\rm in} ~ \mathbb R^{2}.
\end{eqnarray*}
Hence, $\bm{u}$ is the solution of the direct scattering problem
(\ref{a1})--(\ref{a2}).

Conversely, if $\bm{u}$ is a solution of the direct scattering problem
(\ref{a1})--(\ref{a2}), we show that $\bm{u}$ satisfies the Lippmann--Schwinger
equation (\ref{c2}). Since 
\begin{eqnarray*}\label{c5}
\mu\Delta \bm{u}+(\lambda+\mu)\nabla\nabla\cdot
\bm{u}+\omega^2\bm{u}=\bm{M}\bm{u}+\bm{f} \quad {\rm in} ~ \mathbb R^{2},
\end{eqnarray*}
where $\bm{u}\in H_{\rm loc}^{\varepsilon,p}(\mathbb R^2)^2$ and $M_{ij}\in
C_0^1(\overline{D})$. Note that $\bm{f}\in H_0^{-\varepsilon, p'}(\mathbb
R^2)^2$, we have that  $\bm{M}\bm{u}+\bm{f}\in H_0^{-\varepsilon, p'}(\mathbb
R^2)^2$. An application of Lemma 4.1 in \cite{LHL} shows that for some fixed
$x\in \mathbb R^2$, $\bm{G}(x,\cdot,\omega)\in [L^2_{\rm loc}(\mathbb R^2)\cap
H_{\rm loc}^{1,\hat{p}}(\mathbb R^2)]^{2\times 2}$ for $\hat{p}\in (1,2)$. Since
$0<\varepsilon<\frac{2}{p}$, a simple calculation gives that
$\frac{1}{p}-\frac{\varepsilon}{2}>0$. Let
$\widetilde{\delta}=\frac{1}{p}-\frac{\varepsilon}{2}$ and define
$\widetilde{p}:=\frac{2}{1+\widetilde{\delta}}<2$, then
$\frac{1}{\widetilde{p}}-\frac{1}{2}<\frac{1}{p}-\frac{\varepsilon}{2}$. It
follows from the Sobolev embedding theorem that $H_{\rm
loc}^{1,\widetilde{p}}(\mathbb R^2)$ is embedded into $H_{\rm loc}^{\varepsilon,
p}(\mathbb R^2)$, which implies that $\bm{G}(x,\cdot,\omega)\in [H_{\rm
loc}^{\varepsilon, p}(\mathbb R^2)]^{2\times
2}$. Choose a large enough ball $B_r$ such that $D\subset B_r$, then we have in
the sense of distributions that  
\begin{align*}
\int_{B_r}\bm{G}(x,y,\omega)\left[\mu\Delta
\bm{u}(y)+(\lambda+\mu)\nabla\nabla\cdot
\bm{u}(y)+\omega^2\bm{u}(y)\right]dy\\
=\int_{B_r}\bm{G}(x,y,\omega)[\bm{M}(y)\bm{u}(y)+\bm{f}(y)]dy.
\end{align*}

Denote by $T$ the operator that maps $\bm{u}$ to the left-hand side of
the above equation. For $\bm{\psi}\in C^{\infty}(\mathbb R^2)^2$, by
the similar arguments as those in the proof of Lemma 4.3 in \cite{LHL}, we
obtain
\begin{eqnarray*}
T\bm{\psi}(x)=-\bm{\psi}(x)+\int_{\partial B_r}[\bm{G}(x,y,\omega)
P\bm{\psi}(y)-P\bm{G}(x,y,\omega)\bm{\psi}(y)]ds(y),
\end{eqnarray*}
where $P\bm{\psi}:=\mu\frac{\partial
\bm{\psi}}{\partial\bm{\nu}}+(\lambda+\mu)(\nabla\cdot\bm{\psi})\bm{\nu}$ and
$\bm{\nu}$ is the unit normal vector on the boundary $\partial B_r$.

Approximating $\bm{u}$ with smooth functions, we get
\begin{align*}
-\bm{u}(x)+\int_{\partial B_r}[\bm{G}(x,y,\omega) P\bm{u}(y)-P\bm{G}(x,y,\omega)
\bm{u}(y)]ds(y)\\
=\int_{B_r}\bm{G}(x,y,\omega) [\bm{M}(y)\bm{u}(y)+\bm{f}(y)]dy.
\end{align*}
Using the radiation condition yields
\begin{eqnarray*}
\lim_{r\rightarrow\infty}\int_{\partial B_r}[\bm{G}(x,y,\omega)
P\bm{u}(y)-P\bm{G}(x,y,\omega) \bm{u}(y)]ds(y)=0.
\end{eqnarray*}
Therefore,
\begin{eqnarray*}
\bm{u}(x)+\int_D\bm{G}(x,y,\omega)
\bm{M}(y)\bm{u}(y)dy=-\int_D\bm{G}(x,y,\omega) \bm{f}(y)dy,\quad x\in\mathbb
R^2,
\end{eqnarray*}
which shows that $\bm{u}$ satisfies the Lippmann--Schwinger equation (\ref{c2})
and completes the proof. 
\end{proof}

The Lippmann--Schwinger equation (\ref{c2}) can be written in the operator form 
\begin{eqnarray}\label{c6}
(I+K_{\omega})\bm{u}=-H_{\omega}\bm{f},
\end{eqnarray}
where the operators $H_{\omega}$ and $K_{\omega}$ are defined by
\begin{align}\label{c7}
(H_{\omega}\bm{f})(x)&=\int_{D}\bm{G}(x,y,\omega)\bm{f}(y)dy,\quad x\in
D,\\\label{c8}
(K_{\omega}\bm{u})(x)&=\int_{D}\bm{G}(x,y,\omega)
\bm{M}(y)\bm{u}(y)dy,\quad x\in D.
\end{align}

\begin{lemma}\label{lemma5}
Assume that $p\geq 2$, $\frac{1}{p}+\frac{1}{p'}=1$,
$0<\varepsilon<\frac{2}{p}$, and $\bm{M}$ satisfies Assumption \ref{assu3}. Then
the operators $H_{\omega}: H_0^{-s}(D)^2\rightarrow H^{s}(D)^2$ and $K_{\omega}:
H^{\varepsilon,p}(D)^2\rightarrow H^{\varepsilon,p}(D)^2$ are bounded for $s\in
(0,1)$. Moreover, $K_{\omega}: H^{\varepsilon,p}(D)^2\rightarrow
H^{\varepsilon,p}(D)^2$ is compact.
\end{lemma}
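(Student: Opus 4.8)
The plan is to deduce both statements from a single fact: convolution with the Green tensor $\bm G(\cdot,\cdot,\omega)$ is a pseudo-differential operator (equivalently, a matrix Fourier multiplier) of order $-2$, and hence smooths by two derivatives on every Sobolev scale. Indeed, the matrix appearing in \eqref{c4}, namely $(4\pi^2\mu|\xi|^2-\omega^2)\bm I+4\pi^2(\lambda+\mu)\xi\xi^\top$, has eigenvalues $4\pi^2\mu|\xi|^2-\omega^2$ (with eigenvector orthogonal to $\xi$) and $4\pi^2(\lambda+2\mu)|\xi|^2-\omega^2$ (with eigenvector parallel to $\xi$), both comparable to $|\xi|^2$ as $|\xi|\to\infty$ since $\mu>0$ and $\lambda+2\mu>0$. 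Thus $\widehat{\bm G}(\xi)$ is a classical symbol of order $-2$ away from the two circles in the frequency variable determined by the shear and compressional wavenumbers, on which the symbol degenerates; the Kupradze--Sommerfeld condition \eqref{a2} selects the outgoing branch there, which contributes only a kernel that is real-analytic off the diagonal and is therefore harmless on the bounded domain $D$.

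For the first assertion, I would split $H_\omega$ into a high-frequency part, which by the order $-2$ symbol maps $H^{-s}(\mathbb R^2)^2$ into $H^{-s+2}(\mathbb R^2)^2$, and a low-frequency part carrying the resolvent degeneracies, whose kernel is smooth on $D\times D$ and hence maps $H_0^{-s}(D)^2$ into $C^\infty(\overline D)^2$. Extending $\bm f\in H_0^{-s}(D)^2$ by zero and restricting the output to $D$ therefore gives $\|H_\omega\bm f\|_{H^{2-s}(D)^2}\lesssim\|\bm f\|_{H^{-s}(D)^2}$, and the continuous embedding $H^{2-s}(D)\hookrightarrow H^s(D)$, valid since $2-s>s$ for $s\in(0,1)$, yields the boundedness of $H_\omega\colon H_0^{-s}(D)^2\to H^s(D)^2$. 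The order $-2$ property can be read off the kernel directly: by \eqref{b8} the leading logarithmic singularities of $\Phi(x,y,\kappa_{\rm s})$ and $\Phi(x,y,\kappa_{\rm p})$ coincide and cancel in the difference $\Phi(x,y,\kappa_{\rm s})-\Phi(x,y,\kappa_{\rm p})$, and applying $\nabla_x\nabla_x^\top$ to the remaining $|x-y|^2\ln|x-y|$ term produces only a logarithmic singularity; together with the $\tfrac1\mu\Phi(x,y,\kappa_{\rm s})\bm I$ term, the entries of $\bm G$ are at worst logarithmically singular on the diagonal, exactly as reflected in \eqref{b12}--\eqref{b14}. This matches the regularity $\bm G(x,\cdot,\omega)\in[L^2_{\rm loc}\cap H^{1,\hat p}_{\rm loc}]^{2\times2}$ already used via Lemma~4.1 of \cite{LHL}.

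For $K_\omega$, I would factor $(K_\omega\bm u)(x)=(H_\omega(\bm M\bm u))(x)$ from \eqref{c7}--\eqref{c8}. Since each $M_{ij}\in C_0^1(\overline D)$ is a bounded Lipschitz multiplier and $\varepsilon<\tfrac2p$, the Sobolev embedding $H^{\varepsilon,p}(D)\hookrightarrow L^{r_1}(D)$ holds for some finite $r_1>p$; applying Lemma \ref{lemma2} with $q_1=\infty$, $q_2=p$, and $\tfrac1p=\tfrac1{r_1}+\tfrac1{r_2}$ then gives $\|\bm M\bm u\|_{H^{\varepsilon,p}(D)^2}\lesssim\|\bm u\|_{H^{\varepsilon,p}(D)^2}$, which is precisely where the hypothesis $\varepsilon<\tfrac2p$ enters. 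Because $\bm M$ is compactly supported in $D$, the product $\bm M\bm u$ extends by zero to $H_0^{\varepsilon,p}(D)^2$, so the order $-2$ smoothing of $H_\omega$ applies in the $L^p$-based scale as well and produces $K_\omega\bm u\in H^{\varepsilon+2,p}(D)^2$ with $\|K_\omega\bm u\|_{H^{\varepsilon+2,p}(D)^2}\lesssim\|\bm u\|_{H^{\varepsilon,p}(D)^2}$. This gives boundedness; compactness is then immediate from the compact Sobolev embedding $H^{\varepsilon+2,p}(D)^2\hookrightarrow H^{\varepsilon,p}(D)^2$ on the bounded Lipschitz domain $D$, since $K_\omega$ factors through the smaller space $H^{\varepsilon+2,p}(D)^2$.

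The step I expect to be the main obstacle is making the order $-2$ smoothing of $H_\omega$ rigorous in the full $L^p$ Sobolev scale, not merely for $p=2$. A careless reading of \eqref{c1} suggests that the two derivatives in $\nabla_x\nabla_x^\top$ cost two orders and leave an operator of order $0$; the resolution is the cancellation in $\Phi(x,y,\kappa_{\rm s})-\Phi(x,y,\kappa_{\rm p})$ together with the resolvent structure of \eqref{c4}, which preserves order $-2$. To make this airtight I would split $\bm G$ into an explicit pseudo-homogeneous principal part, whose $L^p$ mapping follows from the Calder\'on--Zygmund theory for second derivatives of the two-dimensional Newtonian potential and the Hardy--Littlewood--Sobolev inequality for its lower-order tail, and a smooth remainder coming from the lower-order terms in \eqref{b8}--\eqref{b14} and from the outgoing resolvent degeneracies; the remainder is bounded on $D$ thanks to the boundedness of $D$ and the analyticity of $\bm G$ off the diagonal.
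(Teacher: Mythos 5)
Your proposal is correct, but it follows a genuinely different route from the paper's proof. The paper works entirely on the kernel side: it computes $\partial^2_{x_ix_j}[\Phi(x,y,\kappa_{\rm s})-\Phi(x,y,\kappa_{\rm p})]$ via the Hankel recurrence, observes through \eqref{b12}--\eqref{b13} the same cancellation you identify (so that $\bm G$ is at worst logarithmically singular, see \eqref{c10}), and then invokes the LPS-type mapping result to get $H_\omega\colon H_0^{-s}(D)^2\to H^s(D)^2$ for $s\in(0,1)$ only -- it never claims the full two orders of smoothing. For $K_\omega$ the paper goes the opposite way from you: instead of keeping $\bm M\bm u$ at positive smoothness, it uses Lemma \ref{lemma1} with $\bm u$ as the smooth factor and $M_{ij}\in C_0^1\subset H_0^{-\varepsilon,p_1'}(D)$ as the rough factor, so that $\bm M\bm u\in H_0^{-\varepsilon,p'}(D)^2$, and then sandwiches the bounded operator $H_\omega\colon H_0^{-s}\to H^s$ (with $s=1-\tfrac1p+\tfrac\varepsilon2$) between the two compact embeddings $H_0^{-\varepsilon,p'}(D)\hookrightarrow H_0^{-s}(D)$ and $H^s(D)\hookrightarrow H^{\varepsilon,p}(D)$, both of which hold precisely because $\varepsilon<\tfrac2p$; compactness of $K_\omega$ comes from these embeddings rather than from extra smoothing. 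Your symbol-based argument buys more: genuine order $-2$ smoothing on the whole $L^p$ scale (hence $K_\omega\colon H^{\varepsilon,p}\to H^{\varepsilon+2,p}$ and compactness from a single embedding), with the hypothesis $\varepsilon<\tfrac2p$ needed only in the multiplier step via Lemma \ref{lemma2}. The price is heavier machinery: you need the Mikhlin--H\"ormander multiplier theorem (or Calder\'on--Zygmund theory) for the $L^p$-scale claim, and a limiting-absorption treatment of the two characteristic circles where the symbol in \eqref{c4} degenerates -- your cutoff argument (the circle part is the Fourier transform of a compactly supported distribution, hence has a real-analytic kernel, smooth up to and including the diagonal, so it is smoothing on compactly supported distributions) is the right fix, though you should state that this kernel is smooth on all of $D\times D$, not merely off the diagonal. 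The paper's route is more elementary given the imported lemmas from \cite{LPS} and \cite{MT}: it needs no $L^p$ multiplier theory, only the $L^2$-based mapping property of a logarithmically singular kernel plus compact Sobolev embeddings, at the cost of interpreting the product $\bm M\bm u$ in negative-order spaces and obtaining weaker smoothing. Both proofs ultimately rest on the same cancellation; you verify it on the kernel side exactly as the paper does in \eqref{c9}--\eqref{c10}, and on the symbol side it is visible as the $|\xi|^{-4}$ decay of $\widehat{\Phi}_{\rm s}-\widehat{\Phi}_{\rm p}$, which compensates the factor $\xi\xi^\top$ coming from $\nabla_x\nabla_x^\top$.
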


\begin{proof}
We study the asymptotic expansion of the Green tensor $\bm{G}(x,y,\omega)$
when $|x-y|\to 0$. Recall the Green tensor:
\begin{eqnarray*}
\bm{G}(x,y,\omega)=\frac{1}{\mu}\Phi(x,y,\kappa_{\rm
s})\bm{I}+\frac{1}{\omega^2}\nabla_x\nabla_x^\top(\Phi(x,y,
\kappa_{\rm s})-\Phi(x,y,\kappa_{\rm p}))
\end{eqnarray*}
and the recurrence relation for the Hankel function of the first
kind \cite[(5.6.3)]{NN}:
\begin{eqnarray*}
\frac{d}{dt}[t^{-n}H_n^{(1)}(t)]=-t^{-n}H_{n+1}^{(1)}(t). 
\end{eqnarray*}
A direct calculation shows for $i,j=1,2$ that 
\begin{align}\notag
&\partial^2_{x_i x_j}[\Phi(x,y,\kappa_{\rm
s})-\Phi(x,y,\kappa_{\rm p})]\\\label{c9}
&=-\frac{\rm i}{4}\frac{1}{|x-y|}\Gamma_1(x-y,\omega)\delta_{ij}+\frac{\rm
i}{4}\frac{(x_i-y_i)(x_j-y_j)}{|x-y|^2}\Gamma_1(x-y,\omega),
\end{align}
where $\delta_{ij}$ is the Kronecker delta function. 
Substituting (\ref{b12})--(\ref{b13}) into (\ref{c9}) gives 
\begin{align}\label{c10}
&\partial^2_{x_i x_j}[\Phi(x,y,\kappa_{\rm
s})-\Phi(x,y,\kappa_{\rm p})]\notag\\
&=\frac{1}{4\pi}\left(\kappa_{\rm s}^2\ln\frac{\kappa_{\rm s}|x-y|}{2}
-\kappa_{\rm p}^2\ln\frac{\kappa_{\rm
p}|x-y|}{2}\right)\delta_{ij}+O(1).
\end{align}
Comparing (\ref{c10}) with (\ref{b8}), we conclude that the singularity of
$\nabla_x\nabla_x^\top(\Phi(x,y, \kappa_{\rm s})-\Phi(x,y,\kappa_{\rm p}))$ is
not exceeding the singularity of $\Phi(x,y,\kappa_{\rm s})\bm{I}$ when
$|x-y|\to 0$. It follows from Lemma \ref{lemma1} that $H_{\omega}:
H_0^{-s}(D)^2\rightarrow H^{s}(D)^2$ is bounded for $s\in (0,1)$.

For $\bm{u}\in H^{\varepsilon,p}(D)^2$ and $M_{ij}\in C_0^1(D) \subset
H_0^{-\varepsilon,p_1'}(D)$, by Lemma \ref{lemma1}, we obtain that
$M_{ij}\bm{u}$ is a well-defined element of $H_0^{-\varepsilon,p'}(D)^2$
and 
\begin{eqnarray}\label{c11}
\|M_{ij}\bm{u}\|_{H_0^{-\varepsilon,p'}(D)^2}\lesssim
\|M_{ij}\|_{H_0^{-\varepsilon,p_1'}(D)}\|\bm{u}\|_{H^{\varepsilon,p}(D)^2}.
\end{eqnarray}
For some fixed $\varepsilon\in (0,\frac{2}{p})$, we define
$\tilde{\delta}=\frac{1}{p}-\frac{\varepsilon}{2}\in (0,1)$ and
$s=1-\tilde{\delta}\in (0,1)$.  It is clear to note that
$\frac{1}{2}-\frac{s}{2}<\frac{1}{p}-\frac{\varepsilon}{2}$. The Sobolev
embedding theorem implies that $H^{s}(D)$ is embedded compactly into
$H^{\varepsilon,p}(D)$ and $H_0^{-\varepsilon,p'}(D)$ is embedded compactly into
$H^{-s}_0(D)$. Noting that $K_{\omega}\bm{u}=H_{\omega}(\bm{M}\bm{u})$ and
$\bm{M}\bm{u}\in H_0^{-\varepsilon,p'}(D)^2$, which is embedded compactly into
$H_0^{-s}(D)^2$, and that  $H_{\omega}: H_0^{-s}(D)^2\rightarrow
H^{s}(D)^2$ is bounded, we claim from (\ref{c11}) that $K_{\omega}:
H^{\varepsilon,p}(D)^2\rightarrow H^{\varepsilon,p}(D)^2$ is bounded and
compact.
\end{proof}

Now we present the existence of a unique solution of the direct scattering
problem (\ref{a1})--(\ref{a2}).

\begin{theorem}\label{theorem3}
Let $\bm{f}\in H_0^{-\varepsilon, p'}(D)^2$ with $0<\varepsilon<\frac{2}{p}$ and
$\bm{M}$ satisfy Assumption \ref{assu3}. Then the Lippmann--Schwinger equation
(\ref{c6}) has a unique solution $\bm{u}\in H_{\rm loc}^{\varepsilon,p}(\mathbb
R^2)^2$, which implies that the scattering problem (\ref{a1})--(\ref{a2}) has a
unique solution $\bm{u}\in H_{\rm loc}^{\varepsilon,p}(\mathbb R^2)^2$ which
satisfies the stability estimate 
\begin{eqnarray*}
\|\bm{u}\|_{H_{\rm loc}^{\varepsilon,p}(\mathbb R^2)^2}\lesssim\|\bm{f}\|_{
H_0^{-\varepsilon,p'}(\mathbb R^2)^2}.
\end{eqnarray*}
\end{theorem}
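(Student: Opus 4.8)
The plan is to apply the Riesz--Fredholm theory to the operator equation \eqref{c6} and then reduce uniqueness to the unique continuation principle for the elastic system. By Lemma \ref{lemma5}, the operator $K_\omega$ is compact on $H^{\varepsilon,p}(D)^2$, so $I+K_\omega$ is a Fredholm operator of index zero on that space. Hence it suffices to prove injectivity: if $(I+K_\omega)\bm u=0$ for some $\bm u\in H^{\varepsilon,p}(D)^2$, then $\bm u=0$. Once injectivity is established, the Fredholm alternative furnishes a bounded inverse $(I+K_\omega)^{-1}$, and the solution is recovered as $\bm u=-(I+K_\omega)^{-1}H_\omega\bm f$ in $D$ and then extended to all of $\mathbb R^2$ through the volume-potential representation $\bm u=-H_\omega(\bm M\bm u+\bm f)$.

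First I would extend the homogeneous solution $\bm u$ to $\mathbb R^2$ via the representation $\bm u=-H_\omega(\bm M\bm u)$. Since $\bm G(x,\cdot,\omega)$ and its derivatives satisfy the Kupradze--Sommerfeld radiation condition, $\bm u$ is a radiating solution, and by the equivalence in Theorem \ref{theorem2} it solves the homogeneous Navier equation $\mu\Delta\bm u+(\lambda+\mu)\nabla\nabla\cdot\bm u+\omega^2\bm u-\bm M\bm u=0$ in $\mathbb R^2$. Outside $\overline D$ the coefficient $\bm M$ vanishes, so elliptic regularity makes $\bm u$ real-analytic there, and the Helmholtz decomposition splits it into $\bm u_{\rm p}$ and $\bm u_{\rm s}$, each a radiating solution of a scalar Helmholtz equation with wavenumber $\kappa_{\rm p}$ and $\kappa_{\rm s}$, respectively.

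The heart of the argument is to show $\bm u\equiv 0$. I would multiply the homogeneous equation by $\overline{\bm u}$, integrate over a large ball $B_r\supset\overline D$, and integrate by parts. Because $\bm M$ is real symmetric and $\lambda,\mu,\omega$ are real, every bulk term is real, so taking the imaginary part leaves only the boundary traction integral over $\partial B_r$. Letting $r\to\infty$ and invoking the radiation condition \eqref{a2} forces the far-field patterns of $\bm u_{\rm p}$ and $\bm u_{\rm s}$ to vanish; Rellich's lemma then gives $\bm u=0$ in $\mathbb R^2\setminus\overline D$, and the unique continuation principle for the elastic system (applicable since $M_{ij}\in C_0^1(\overline D)$) propagates this vanishing into $D$, yielding $\bm u\equiv 0$. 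I expect this final step---verifying that the limiting boundary term reduces to the radiated energy and then invoking unique continuation for the Lam\'e system with merely $C^1$ coefficients---to be the main obstacle.

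Finally, for the stability estimate I would combine the bounded invertibility of $I+K_\omega$ with the mapping property of $H_\omega$ from Lemma \ref{lemma5}. Choosing $s=1-(\tfrac1p-\tfrac\varepsilon2)\in(0,1)$ as in the proof of Lemma \ref{lemma5}, the embeddings $H_0^{-\varepsilon,p'}(D)\hookrightarrow H_0^{-s}(D)$ and $H^{s}(D)\hookrightarrow H^{\varepsilon,p}(D)$ give $\|H_\omega\bm f\|_{H^{\varepsilon,p}(D)^2}\lesssim\|\bm f\|_{H_0^{-\varepsilon,p'}(D)^2}$, whence $\|\bm u\|_{H^{\varepsilon,p}(D)^2}\lesssim\|\bm f\|_{H_0^{-\varepsilon,p'}(D)^2}$. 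The bound over an arbitrary bounded region, and hence the $H^{\varepsilon,p}_{\rm loc}(\mathbb R^2)^2$ estimate, follows by applying $H_\omega$ once more to $\bm M\bm u+\bm f$ in the representation formula and using that $\bm G(x,\cdot,\omega)\in H^{\varepsilon,p}_{\rm loc}(\mathbb R^2)^{2\times 2}$ together with the local smoothing of the volume potential.
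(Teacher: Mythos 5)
Your proposal is correct and shares the paper's overall skeleton --- compactness of $K_{\omega}$ from Lemma \ref{lemma5}, the Fredholm alternative reducing everything to injectivity of $I+K_{\omega}$, and injectivity settled by a radiation-condition energy argument followed by Rellich's lemma and the unique continuation result of \cite{AITY} --- but the core energy argument is organized genuinely differently. The paper decomposes the homogeneous solution globally as $\bm{u}=\nabla\psi_1+{\bf curl}\,\psi_2$ and asserts that the perturbed Navier system decouples into \eqref{c16} and \eqref{c18}, so that $\bm{u}_{\rm p}$ and $\bm{u}_{\rm s}$ each satisfy a perturbed Helmholtz equation in all of $\mathbb{R}^2$ with right-hand sides $\frac{1}{\lambda+2\mu}\bm{M}\bm{u}_{\rm p}$ and $\frac{1}{\mu}\bm{M}\bm{u}_{\rm s}$; the imaginary-part/Green identity argument is then run separately on each component. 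You instead run the Green identity on the full vector system over $B_r$, use the real symmetry of $\bm{M}$ (together with real $\lambda,\mu,\omega$) to see that all bulk terms are real, and invoke the Helmholtz decomposition only in the exterior of $D$, where $\bm{M}$ vanishes and the splitting is unambiguous. Your version buys robustness: the paper's global decoupling is delicate, since $\bm{M}\bm{u}_{\rm p}$ need not be curl-free nor $\bm{M}\bm{u}_{\rm s}$ divergence-free, so the term-by-term matching behind \eqref{c16}--\eqref{c18} is not automatic, whereas your identity requires no decoupling inside $D$. The price is exactly the step you flag as the main obstacle: from the single identity ${\rm Im}\int_{\partial B_r}\overline{\bm{u}}\cdot P\bm{u}\,ds=0$ you must conclude that \emph{both} far-field patterns vanish, which requires the asymptotic decoupling of the boundary term --- the cross terms between $\bm{u}_{\rm p}$ and $\bm{u}_{\rm s}$ disappear as $r\to\infty$ because the P far field is longitudinal and the S far field is transverse, and the two energies enter with the distinct positive weights $\kappa_{\rm p}(\lambda+2\mu)$ and $\kappa_{\rm s}\mu$. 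This is a standard computation in elastic scattering theory, so the remaining work is routine rather than a gap. You also make the stability estimate explicit via the bounded inverse of $I+K_{\omega}$, the mapping property of $H_{\omega}$, and the representation $\bm{u}=-H_{\omega}(\bm{M}\bm{u}+\bm{f})$ for the local extension, a point the paper's proof leaves implicit.
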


\begin{proof}
For the Lippmann--Schwinger equation $(I+K_{\omega})\bm{u}=-H_{\omega}\bm{f}$,
by Lemma \ref{lemma5}, we obtain that $H_{\omega}\bm{f}\in H^{\varepsilon,
p}(D)^2$ for $\bm{f}\in H_0^{-\varepsilon, p'}(D)^2$ and $I+K_{\omega}: 
H^{\varepsilon, p}(D)^2\rightarrow  H^{\varepsilon, p}(D)^2$ is a Fredholm
operator. Thus, by the Fredholm alternative, it suffices to show that
$(I+K_{\omega})\bm{u}=0$ has only the trivial solution $\bm{u}=0$.

For $(I+K_{\omega})\bm{u}=0$, we have 
\begin{eqnarray*}\label{c12}
\bm{u}(x)=-\int_{D}\bm{G}(x,y,\omega) \bm{M}(y)\bm{u}(y)dy,\quad x\in\mathbb
R^2.
\end{eqnarray*}
Thus we have $\bm{u}$ is smooth in $\mathbb R^2\setminus\overline{D}$ and 
\begin{eqnarray*}
\hat{\bm{u}}(\xi)=-\big[(4\pi^2\mu|\xi|^2-\omega^2)\bm{I}
+4\pi^2(\lambda+\mu)\xi\cdot\xi^\top\big]^{-1}\widehat{\bm{M}\bm{u}}(\xi)
\end{eqnarray*}
which implies
\begin{eqnarray*}
\big[4\pi^2\mu|\xi|^2+4\pi^2(\lambda+\mu)\xi\cdot\xi^\top-\omega^2\big]
\hat{\bm{u}}(\xi)=-\widehat{\bm{M}\bm{u}}(\xi).
\end{eqnarray*}
Taking the inverse Fourier transform of the above equation yields 
\begin{eqnarray}\label{c13}
\mu\Delta\bm{u}+(\lambda+\mu)\nabla\nabla^\top\cdot\bm{u}+\omega^2\bm{u}=\bm{M}
\bm{u}\quad{\rm in}~ \mathbb R^2.
\end{eqnarray}
By the Helmholtz decomposition, there exists two scalar potential
functions $\psi_1$ and $\psi_2$ such that 
\begin{eqnarray}\label{c14} 
\bm{u}=\nabla\psi_1+{\bf curl}\psi_2=(\partial_{x_1}\psi_1,
\partial_{x_2}\psi_1)^\top+ (\partial_{x_2}\psi_2, -\partial_{x_1}\psi_2)^\top.
\end{eqnarray}
 Substituting (\ref{c14}) into (\ref{c13}) gives that 
\begin{eqnarray*}\label{c15}
\nabla[(\lambda+2\mu)\Delta\psi_1+\omega^2\psi_1]+{\bf
curl}[\mu\Delta\psi_2+\omega^2\psi_2]=\bm{M}\nabla\psi_1+\bm{M}{\bf
curl}\psi_2\quad{\rm in}~\mathbb R^2,
\end{eqnarray*} 
which implies that
\begin{align*}
(\lambda+2\mu)\Delta(\nabla\psi_1)+\omega^2(\nabla\psi_1)=\bm{M}
\nabla\psi_1,\\
\mu\Delta({\bf curl}\psi_2)+\omega^2({\bf curl}\psi_2)=\bm{M}{\bf
curl}\psi_2.
\end{align*}

Letting $\bm{u}_{\rm p}=\nabla\psi_1$ and $\bm{u}_{\rm s}={\bf curl}\psi_2$,
we obtain that 
\begin{equation}\label{c16}
\begin{cases}
\Delta \bm{u}_{\rm p}+\kappa_{\rm p}^2\bm{u}_{\rm
p}=\frac{1}{\lambda+2\mu}\bm{M}\bm{u}_{\rm p}\quad{\rm in}~ \mathbb
R^2\\
\lim\limits_{r\rightarrow\infty}r^{\frac{1}{2}}\left(\partial_r
\bm{u}_{\rm p} -{\rm i}\kappa_{\rm p}\bm{u}_{\rm p}\right)=0
\end{cases}
\end{equation}
and
\begin{equation}\label{c18}
\begin{cases}
\Delta \bm{u}_{\rm s}+\kappa_{\rm s}^2\bm{u}_{\rm
s}=\frac{1}{\mu}\bm{M}\bm{u}_{\rm s}\quad{\rm in}~ \mathbb R^2\\
\lim\limits_{r\rightarrow\infty}r^{\frac{1}{2}}\left(\partial_r
\bm{u}_{\rm s} -{\rm i}\kappa_{\rm s}\bm{u}_{\rm s}\right)=0.
\end{cases}
\end{equation}
Since ${\rm supp}M_{ij}\subset D$, it follows from \eqref{c16}--\eqref{c18} that
$\bm{u}_{\rm p}$ and $\bm{u}_{\rm s}$ satisfy the homogeneous Helmholtz equation
in $\mathbb R^2\setminus\overline{D}$ and the Sommerfeld radiation condition.
Hence, $\bm{u}_{\rm p}$ and $\bm{u}_{\rm s}$ admit the following asymptotic
expansions
\begin{eqnarray}\label{c20}
\bm{u}_{\rm p}(x)=\frac{e^{{\rm i}\kappa_{\rm p}|x|}}{4\pi
|x|^{\frac{1}{2}}}\bm{u}_{{\rm p},\infty}(\hat{x})+o(|x|^{\frac{1}{2}}),\qquad
\bm{u}_{\rm s}(x)=\frac{e^{{\rm i}\kappa_{\rm s}|x|}}{4\pi
|x|^{\frac{1}{2}}}\bm{u}_{{\rm s},\infty}(\hat{x})+o(|x|^{\frac{1}{2}}).
\end{eqnarray}
Noting that $\bm{u}_{\rm p}$ satisfies the Sommerfeld radiation condition, when
$r\rightarrow \infty$, we have
\begin{eqnarray*}\label{c21}
\int_{\partial B_r}\left|\partial_r \bm{u}_{\rm p}-{\rm i}\kappa_{\rm p}
\bm{u}_{\rm p}\right|^2ds=\int_{\partial B_r}(|\partial_r \bm{u}_{\rm
p}|^2+\kappa_{\rm p}^2|\bm{u}_{\rm p}|^2)ds+2\kappa_{\rm p}{\rm
Im}\int_{\partial B_r}\bm{u}_{\rm p}\partial_{\nu}\overline{\bm {u}}_{\rm
p}ds\rightarrow 0.
\end{eqnarray*}
Combining the second Green theorem and (\ref{c16})--(\ref{c18}), we get 
\begin{align*}
&\int_{\partial B_r}\bm{u}_{\rm p}\partial_{\nu}\overline{\bm {u}}_{\rm
p}ds=\int_{B_r}|\nabla\bm{u}_{\rm p}|^2dx-\kappa_{\rm
p}^2\int_{B_r}|\bm{u}_{\rm p}|^2dx\\
&+\frac{1}{\lambda+2\mu}\int_{B_r}\left(M_{11}|u_{{\rm p},1}|^2+M_{22}|u_{{\rm
p},2}|^2+M_{12}u_{{\rm p},1}\overline{u_{{\rm p},2}}+M_{21}\overline{u_{{\rm
p},1}}u_{{\rm p},2}\right)dx,
\end{align*}
where $u_{{\rm p},1}$ and $u_{{\rm p},2}$ are the components of $\bm{u}_{\rm
p}$. Since $M$ is real-valued and symmetric, taking the imaginary part of the
above equation leads to ${\rm Im}\int_{\partial B_r}\bm{u}_{\rm
p}\partial_{\nu}\overline{\bm {u}}_{\rm p}ds=0$ which yields
$\lim\limits_{r\rightarrow \infty}\int_{\partial B_r}|\bm{u}_{\rm p}|^2dx=0$.
Using (\ref{c20}), we obtain $\int_{\partial B_1}|\bm{u}_{{\rm
p},\infty}|^2ds=0$, which implies $\bm{u}_{{\rm p},\infty}=0$, so $\bm{u}_{\rm
p}(x)=0$ in $\mathbb R^2\setminus\overline{D}$.  Similarly, we can obtain
$\bm{u}_{\rm s}=0$ in $\mathbb R^2\setminus\overline{D}$. Thus, we have
$\bm{u}=0$ in $\mathbb R^2\setminus\overline{D}$. Since $M_{ij}\in
C_0^1(\overline{D})$, it follows from the unique continuation (e.g.,
\cite{AITY}) that $\bm{u}=0$ in $\mathbb R^2$, which shows that $I+K_{\omega}$
is injective and completes the proof.  
\end{proof}

\section{Born approximation}

As shown in the previous section, the direct scattering problem is equivalent
to the Lippmann--Schwinger equation
\begin{eqnarray*}
\bm{u}(x)+\int_D\bm{G}(x,y,\omega)\bm{M}(y)\bm{u}(y)dy=-\int_D\bm{G}(x,y,\omega)
\bm{f}(y)dy,\quad x\in\mathbb R^2.
\end{eqnarray*}
Consider the Born sequence of the Lippmann--Schwinger equation 
\begin{eqnarray}\label{d1}
\bm{u}_n(x):=(-K_{\omega}\bm{u}_{n-1})(x),\quad n=1, 2, \dots, 
\end{eqnarray}
where the initial guess is given by 
\begin{equation*}
 \bm{u}_0(x):=(-H_{\omega}\bm{f})(x),
\end{equation*}
which is called the Born approximation to the solution of the 
Lippmann--Schwinger equation. Here, $K_{\omega}$ and $H_{\omega}$ are operators
given by (\ref{c7}) and (\ref{c8}), respectively.

We aim to show that for sufficient large $\omega$ and $x\in U$, the
Born series $\sum_{n=0}^{\infty}\bm{u}_n(x)$ converges to the solution
$\bm{u}(x)$ and the higher order terms decay in an appropriate way. 

\begin{lemma}\label{lemma6}
For any $1\leq p\leq 2\leq r\leq\infty$, $s\in(0,1)$ and $\omega\geq 1$, the
following estimates hold
\begin{align*}
\|H_{\omega}\|_{H_0^{-s,p}(D)^2\rightarrow H^{s,r}(D)^2} &\lesssim
\omega^{-1+2[s+(\frac{1}{p}-\frac{1}{r})]},\\
\|K_{\omega}\|_{H^{s,2p}(D)^2\rightarrow H^{s,2p}(D)^2} &\lesssim
\omega^{-1+2[s+(1-\frac{1}{p})]},\\
\|K_{\omega}\|_{H^{s,2p}(D)^2\rightarrow L^{\infty}(U)^2}&\lesssim
\omega^{1+2s-\frac{1}{p}},
\end{align*}
where the constant $c=c(\hat{\omega})$ is finite almost surely.
\end{lemma}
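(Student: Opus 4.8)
The plan is to derive all three operator norm bounds from the asymptotic behavior of the Green tensor $\bm{G}(x,y,\omega)$, distinguishing carefully between its behavior for small $|x-y|$ (which governs the singularity and hence the mapping properties between Sobolev spaces over $D$) and its behavior for large $\kappa|x-y|$ (which governs the decay into the separated measurement region $U$). Since $K_\omega\bm{u}=H_\omega(\bm{M}\bm{u})$ and $\bm{M}$ has entries in $C_0^1(\overline{D})$, the second and third estimates should follow from the first by factoring through the multiplication by $\bm{M}$ (using Lemma \ref{lemma1} to control $M_{ij}\bm{u}$) composed with the appropriate mapping property of $H_\omega$; thus the first estimate is the heart of the matter.

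For the first bound, I would begin by recording the explicit $\omega$-dependence in $\bm{G}$. Writing $\bm{G}$ via \eqref{c1} in terms of $\Phi(x,y,\kappa)=\frac{\mathrm i}{4}H_0^{(1)}(\kappa|x-y|)$, I would rescale $\xi\mapsto\omega\xi$ (equivalently use the Fourier symbol \eqref{c4}) to extract the frequency scaling. From \eqref{c4}, the symbol $\bm{\widehat G}(\xi)$ behaves like $|\xi|^{-2}$ for $|\xi|\gg\omega$ and is bounded (away from the characteristic cone, which contributes the radiating part) for $|\xi|\lesssim\omega$; this is what produces the two-derivative smoothing recorded in Lemma \ref{lemma5}. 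The strategy is to split the Fourier multiplier at frequency $|\xi|\sim\omega$: on the high-frequency part one gains the full order-$(-2)$ smoothing with the scaling that yields the factor $\omega^{-1+2s}$, while the low-frequency part contributes the Lebesgue-exponent mismatch factor $\omega^{2(\frac1p-\frac1r)}$ through a Hausdorff--Young / Sobolev embedding type estimate on the ball of radius $\omega$. Keeping track of the $\omega$-powers under the dilation $y\mapsto\omega y$ and the normalization of the $H^{s,p}\to H^{s,r}$ norms gives the claimed exponent $-1+2[s+(\frac1p-\frac1r)]$.

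For the remaining two estimates I would proceed as follows. The second estimate concerns $K_\omega$ as a self-map of $H^{s,2p}(D)^2$: apply Lemma \ref{lemma1} to bound $\|\bm{M}\bm{u}\|_{H_0^{-s,(2p)'}(D)^2}\lesssim\|\bm{u}\|_{H^{s,2p}(D)^2}$, then feed this into the first estimate with the Lebesgue exponents $p\rightsquigarrow(2p)'$ and $r\rightsquigarrow 2p$; the arithmetic of the exponents, using $\frac{1}{(2p)'}-\frac{1}{2p}=1-\frac1p$, collapses to the stated $\omega^{-1+2[s+(1-\frac1p)]}$. The third estimate is different in character: here $x$ ranges over $U$, which by Assumption \ref{assu2} has positive distance to $D$, so $|x-y|$ is bounded below and one uses the large-argument asymptotics \eqref{b15}--\eqref{b17} rather than the singular expansion. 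Since $\bm{G}(x,\cdot,\omega)$ is then a smooth kernel on $U\times D$ whose $L^\infty_x$ norm over $D$ grows like $\omega^{1/2}\cdot\omega^{1}$ (the $\omega^{1/2}$ from the $H_0^{(1)}$ prefactor and a further $\omega$ from the two $x$-derivatives in \eqref{c1}, after accounting for the $\omega^{-2}$ normalization), pairing against $\bm{M}\bm{u}\in H_0^{-s,(2p)'}(D)^2$ via the duality $H^{-s}$--$H^{s}$ and Lemma \ref{lemma1} yields the bound $\omega^{1+2s-\frac1p}$. The main obstacle I anticipate is the careful bookkeeping of the frequency-splitting in the first estimate: one must justify the interpolation between the high- and low-frequency regimes uniformly in $\omega\ge 1$ and verify that the mixed $(\frac1p-\frac1r)$ loss is exactly balanced by the dilation factors, since a miscount there propagates into both derived estimates. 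The almost-sure finiteness of the constant $c(\hat\omega)$ comes simply from the almost-sure boundedness of the source in Assumption \ref{assu1}.
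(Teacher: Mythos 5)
You should first know that the paper contains no internal proof to compare against: Lemma \ref{lemma6} is quoted verbatim from \cite[Lemma 5]{LPS}, where it is proved for the scalar Helmholtz kernel, the tacit justification for transferring it to the elastic Green tensor being the computation \eqref{c10} in the proof of Lemma \ref{lemma5}, which shows that the term $\omega^{-2}\nabla_x\nabla_x^\top(\Phi(x,y,\kappa_{\rm s})-\Phi(x,y,\kappa_{\rm p}))$ in \eqref{c1} is no more singular, and scales no worse in $\omega$, than $\mu^{-1}\Phi(x,y,\kappa_{\rm s})\bm{I}$. Judged on its own merits, the architecture of your proposal is right, and your reductions of the second and third estimates to the first are correct and match the intended mechanism: Lemma \ref{lemma1} with $g=\bm{u}\in H^{s,2p}(D)^2$ and $h=M_{ij}\in C_0^1(\overline{D})\subset H_0^{-s,p'}(D)$ indeed gives $\|\bm{M}\bm{u}\|_{H_0^{-s,(2p)'}(D)^2}\lesssim\|\bm{u}\|_{H^{s,2p}(D)^2}$ with $(2p)'=\frac{2p}{2p-1}$, and your exponent arithmetic $\frac{1}{(2p)'}-\frac{1}{2p}=1-\frac{1}{p}$ reproduces the second bound exactly.

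The genuine gap is in the first estimate, which carries all the weight. Your frequency splitting asserts that the symbol \eqref{c4} is ``bounded'' for $|\xi|\lesssim\omega$ away from the characteristic set, but the outgoing symbol has \emph{non-integrable} singularities on the two characteristic circles $2\pi|\xi|=\kappa_{\rm p}$ and $2\pi|\xi|=\kappa_{\rm s}$ (the longitudinal eigenvalue $4\pi^2(\lambda+2\mu)|\xi|^2-\omega^2$ and the transverse one $4\pi^2\mu|\xi|^2-\omega^2$ each vanish there), and no Hausdorff--Young or embedding estimate on the ball $\{|\xi|\lesssim\omega\}$ applies across them. The endpoint your exponent encodes at $p=r=2$, $s\to 0$, namely $\omega^{-1}$, is precisely the cutoff limiting-absorption resolvent bound; it cannot be extracted from size estimates of the symbol or kernel alone but requires oscillation. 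The workable substitute --- essentially what \cite{LPS} do --- is to argue on the kernel side: the Hankel asymptotics \eqref{b15}--\eqref{b17} give $|\bm{G}(x,y,\omega)|\lesssim\omega^{-1/2}|x-y|^{-1/2}$ away from the diagonal plus a logarithmic near-diagonal singularity, each derivative costing a factor $\omega$; Young's inequality on the bounded domain $D$, the classical $L^2$ bound $\|\chi(\Delta+\kappa^2+\mathrm{i}0)^{-1}\chi\|_{L^2\to L^2}\lesssim\omega^{-1}$, and interpolation in both integrability and smoothness indices then yield the stated (deliberately non-sharp) exponents. Until that low-frequency piece is supplied, the first estimate, and hence your second and third, remain unproven. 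Two further bookkeeping remarks: in the third estimate your claimed kernel growth $\omega^{1/2}\cdot\omega^{1}$ is wrong --- for $x\in U$, $y\in D$ the $\omega^{2}$ produced by the two $x$-derivatives is exactly cancelled by the $\omega^{-2}$ prefactor in \eqref{c1}, so $|\bm{G}(x,y,\omega)|=O(\omega^{-1/2})$; this overcount happens to sit below the weak stated bound $\omega^{1+2s-\frac{1}{p}}$, but your computation also never produces the $-\frac{1}{p}$ in the exponent, so the derivation as written does not add up. Lastly, $H_\omega$ and $K_\omega$ are deterministic operators, so the constant in these operator-norm estimates does not depend on $\hat{\omega}$ at all; the phrase $c=c(\hat{\omega})$ in the statement is an artifact of how the lemma is applied downstream (where $\|\bm{f}\|_{H_0^{-\varepsilon,p'}(D)^2}$ is random), not something to be explained via Assumption \ref{assu1} inside this proof.
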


The proof of Lemma \ref{lemma6} can be found in \cite[Lemma 5]{LPS}. By Lemma
\ref{lemma6}, we have for large enough $\omega$ that 
\begin{eqnarray}\label{d5}
(I+K_{\omega})\sum_{n=0}^{N}\bm{u}_n=\bm{u}_0+(-1)^NK_{\omega}^{N+1}\bm{u}
_0\rightarrow \bm{u}_0\quad {\rm as}~ N\rightarrow \infty.
\end{eqnarray}
Since $(I+K_{\omega})^{-1}\bm{u}_0=\bm{u}$, taking the inverse of the operator
$I+K_{\omega}$ in (\ref{d5}) leads to
\begin{eqnarray}\label{d6}
\bm{u}(x,\omega)=\bm{u}_0(x,\omega)+\bm{u}_1(x,\omega)+\bm{b}(x,\omega),
\end{eqnarray}
where $\bm{b}(x,\omega):=\sum_{n=2}^{\infty}\bm{u}_n(x,\omega)$.
With the convergence of the Born approximation (\ref{d6}), we can analyze each
item in the Born approximation. For the leading item $\bm{u}_0$, we have
the following result \cite[Theorem 4.6]{LHL}.

\begin{theorem}\label{theorem4}
Let $\bm{f}$ satisfy Assumption \ref{assu1}. For all $x\in U$, it
holds almost surely that 
\begin{eqnarray*}\label{d7}
\lim_{Q\rightarrow\infty}\frac{1}{Q-1}\int_1^Q\omega^{m+1}|\bm{u}_0(x,
\omega)|^2 d\omega=a\int_{\mathbb R^2}\frac{1}{|x-y|}\phi(y)dy,
\end{eqnarray*}
where $a$ is a constant given in Theorem \ref{theorem1}. 
\end{theorem}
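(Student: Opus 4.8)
The plan is to exploit the fact that the leading Born term $\bm{u}_0=-H_{\omega}\bm{f}$ involves only the free Green tensor and is therefore insensitive to $\bm{M}$; the statement is thus identical to the homogeneous case, and I would prove it in two stages. First I would reduce the almost-sure frequency limit to the limit of its expectation, showing that $\frac{1}{Q-1}\int_1^Q\omega^{m+1}|\bm{u}_0(x,\omega)|^2 d\omega$ and $\frac{1}{Q-1}\int_1^Q\omega^{m+1}\mathbb{E}|\bm{u}_0(x,\omega)|^2 d\omega$ share the same limit almost surely (the statistical-stability step). Second, I would compute the deterministic quantity $\lim_{\omega\to\infty}\omega^{m+1}\mathbb{E}|\bm{u}_0(x,\omega)|^2$ by a stationary-phase/microlocal analysis and identify it with $a\int_{\mathbb{R}^2}|x-y|^{-1}\phi(y)\,dy$.

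For the expectation, since $x\in U$ has positive distance to $D$ the argument $\kappa|x-y|$ is large, so I would insert the large-argument Hankel asymptotics (\ref{b15})--(\ref{b17}) into (\ref{c1}). Keeping only the leading term and noting that each $x$-derivative of $\Phi(x,y,\kappa)$ produces a factor ${\rm i}\kappa\hat{d}$ with $\hat{d}=(x-y)/|x-y|$, the Green tensor collapses to
\[
\bm{G}(x,y,\omega)\approx c_{\rm s}^2\Phi(x,y,\kappa_{\rm s})\bm{P}_{\rm s}+c_{\rm p}^2\Phi(x,y,\kappa_{\rm p})\bm{P}_{\rm p},
\]
where $\bm{P}_{\rm s}=\bm{I}-\hat{d}\hat{d}^\top$ and $\bm{P}_{\rm p}=\hat{d}\hat{d}^\top$ are the transverse (shear) and longitudinal (pressure) projections. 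Using $\mathbb{E}f_j=0$, $\mathbb{E}(f_1f_2)=0$, and the common principal symbol $\phi(y)|\xi|^{-m}$ from Assumption \ref{assu1}, I would write $\mathbb{E}|\bm{u}_0|^2=\sum_j\sum_i\int_D\int_D G_{ij}(x,y)\overline{G_{ij}(x,y')}C_{f_j}(y,y')\,dy\,dy'$. Expanding into S--S, S--P, P--S, and P--P parts, the cross terms drop because ${\rm tr}(\bm{P}_{\rm s}\bm{P}_{\rm p})=0$ on the diagonal $y=y'$ (and, independently, because their mismatched phase $e^{{\rm i}\omega(c_{\rm s}-c_{\rm p})|x-y|}$ averages to zero in frequency).

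In each surviving diagonal term I would set $z=y'-y$ and linearize the phase, $|x-y|-|x-y'|\approx\hat{d}\cdot z$, so the oscillatory factor becomes $e^{{\rm i}\kappa\hat{d}\cdot z}$. By Lemma \ref{lemma4} the covariance kernel equals $c_0(y,y)|z|^{m-2}$ plus a smoother remainder, and the leading Fourier integral $\int_{\mathbb{R}^2}e^{{\rm i}\kappa\hat{d}\cdot z}|z|^{m-2}\,dz=C_m\kappa^{-m}$ is the standard Riesz-kernel transform; the symbol--kernel correspondence makes $c_0(y,y)$ proportional to $\phi(y)$. Since $|\Phi(x,y,\kappa)|^2\sim(8\pi\kappa|x-y|)^{-1}$, the S--S term carries $c_{\rm s}^4\kappa_{\rm s}^{-1}\kappa_{\rm s}^{-m}=c_{\rm s}^{3-m}\omega^{-(m+1)}$ and the P--P term $c_{\rm p}^{3-m}\omega^{-(m+1)}$, while the matrix factor collapses to ${\rm tr}\,\bm{P}_{\rm s}={\rm tr}\,\bm{P}_{\rm p}=1$ after summing the two source components. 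Multiplying by $\omega^{m+1}$ and combining the amplitude constant $\frac{1}{8\pi}$, the Riesz constant, and the symbol-to-kernel factor (which together produce $\frac{1}{32\pi}$) yields
\[
\lim_{\omega\to\infty}\omega^{m+1}\mathbb{E}|\bm{u}_0(x,\omega)|^2=a\int_{\mathbb{R}^2}\frac{1}{|x-y|}\phi(y)\,dy,
\]
so the Cesàro average inherits the same limit; the case $m=2$ uses the logarithmic branch of Lemma \ref{lemma4} but is handled identically.

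The statistical-stability step is where the real work lies, and I expect it to be the main obstacle. Writing $X(\omega)=\omega^{m+1}|\bm{u}_0(x,\omega)|^2$, I would bound the variance of $Y_Q=\frac{1}{Q-1}\int_1^Q X(\omega)\,d\omega$. Because $\bm{u}_0$ is a linear functional of the Gaussian field $\bm{f}$, $X(\omega)$ is a Gaussian quadratic form, so Wick's theorem expresses ${\rm Cov}(X(\omega),X(\omega'))$ through products of cross-frequency kernels $\int_D\int_D\bm{G}(x,y,\omega)\overline{\bm{G}(x,y',\omega')}C_f(y,y')\,dy\,dy'$. The mismatched oscillation $e^{{\rm i}(\kappa_{\omega}-\kappa_{\omega'})|x-y|}$ forces these to decay in $|\omega-\omega'|$ by a non-stationary-phase (Riemann--Lebesgue) argument, giving ${\rm Var}(Y_Q)=o(1)$; Chebyshev's inequality and Borel--Cantelli then give convergence along the integers $Q=n$, and a monotonicity/gap-filling argument upgrades this to the full limit. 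The hard part is making the decorrelation quantitative and summable: one must control the non-principal part $r_1$ of the covariance and the subleading Hankel terms uniformly in $\omega$, securing a rate such as ${\rm Var}(Y_Q)\lesssim Q^{-1}$ along a suitable subsequence. This decorrelation-in-frequency estimate, rather than the deterministic asymptotics, is the crux of the argument and follows the framework of \cite{LHL, LPS}.
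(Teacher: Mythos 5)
The paper gives no independent proof of Theorem \ref{theorem4}: because $\bm{u}_0=-H_{\omega}\bm{f}$ involves only the free Green tensor and never sees $\bm{M}$ --- precisely the observation you open with --- the result is quoted verbatim from \cite[Theorem 4.6]{LHL}, whose underlying argument is the same two-stage scheme you outline (statistical stability of the frequency average via Gaussian decorrelation and Borel--Cantelli, plus far-field Hankel asymptotics and the symbol--kernel correspondence to evaluate $\lim_{\omega\to\infty}\omega^{m+1}\mathbb{E}|\bm{u}_0|^2$). Your proposal is therefore consistent with the paper's route; the one soft spot is that your constant bookkeeping leading to $a=\frac{1}{32\pi}\left(c_{\rm s}^{3-m}+c_{\rm p}^{3-m}\right)$ is asserted rather than derived, though the structural form --- the combination $c_{\rm s}^{3-m}+c_{\rm p}^{3-m}$, the vanishing of the P--S cross terms, and the kernel $|x-y|^{-1}$ --- comes out correctly.
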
 

Now we analyze the item $\bm{b}(x,\omega)$. For $n\geq 2$, by Lemma
\ref{lemma6}, we get
\begin{eqnarray*}
&&\|\bm{u}_n(x,\omega)\|_{L^{\infty}(U)^2}=\|K_{\omega}^n\bm{u}_0\|_{L^{\infty}
(U)^2}\\
&\leq &\|K_{\omega}\|_{H^{\varepsilon,p}(D)^2\rightarrow L^{\infty}(U)^2}
\|K_{\omega}\|^{n-1}_{H^{\varepsilon,p}(D)^2\rightarrow
H^{\varepsilon,p}(D)^2}\\
&&\quad \times\|H_{\omega}\|_{H_0^{-\varepsilon,p'}(D)^2\rightarrow
H^{\varepsilon,p}(D)^2}\|\bm{f}\|_{H_0^{-\varepsilon,p'}(D)^2}\\
&\lesssim &\omega^{1+2\varepsilon-\frac{2}{p}}\omega^{(n-1)[
-1+2(\varepsilon+1-\frac{2}{p})]}\omega^{-1+2[\varepsilon+\frac{1}{p'}-\frac{1}{
p}]}\\
&\lesssim
&\omega^{4\varepsilon+2-\frac{6}{p}}\omega^{(n-1)[-1+2(\varepsilon+\frac
{1}{p'}-\frac{1}{p})]},
\end{eqnarray*}
which gives
\begin{eqnarray*}\label{d9}
\sum_{n=2}^{\infty}\|\bm{u}_n\|_{L^{\infty}(U)^2}\lesssim
\omega^{4\varepsilon+2-\frac{6}{p}}\frac{\omega^{-1+2(\varepsilon+1-\frac{2}{p})
}}{1-\omega^{-1+2(\varepsilon+1-\frac{2}{p})}}\lesssim
\omega^{6\varepsilon+3-\frac{10}{p}}.
\end{eqnarray*}
Since $0<\varepsilon<\frac{2}{p}$ and $p>2$, we can choose suitable
$\varepsilon, p$ such that $\varepsilon'=6\varepsilon+5-\frac{10}{p}$ is 
small enough and 
\begin{eqnarray}\label{d10}
\sum_{n=2}^{\infty}||\bm{u}_n||_{L^{\infty}(U)^2}\lesssim
\omega^{-2+\varepsilon'}.
\end{eqnarray}
Hence, when $Q\to\infty$,
\begin{eqnarray}\label{d11}
\frac{1}{Q-1}\int_1^Q\omega^{m+1}|\bm{b}(x,\omega)|^2d\omega\lesssim
\frac{1}{Q-1}\int_1^Q\omega^{\alpha}d\omega=\frac{1}{\alpha+1}\frac{Q^{\alpha+1}
-1}{Q-1}\rightarrow 0,
\end{eqnarray}
where $\alpha=m+2\varepsilon'-3$. Note that $m\in [2,5/2)$, we have
$\alpha\in(-1,0)$ which is used in (\ref{d11}).

\section{The analysis of $\bm{u}_1(x,\omega)$}

In this section, we consider the term $\bm{u}_1(x,\omega)$ in the Born series
(\ref{d1}), which is given by. 
\begin{eqnarray*}\label{e1}
\bm{u}_1(x,\omega)=\int_D\int_D\bm{G}(x,y,\omega)\bm{M}(y)\bm{G}(y,z,\omega)\bm{
f}(z)dydz, \quad x\in U.
\end{eqnarray*}
It turns out the term $\bm u_1(x, \omega)$ is very difficult to analyze.
Fortunately, after tedious calculations, we find out that the contribution of
$\bm u_1$ can be ignored. We present the main result of this section. 

\begin{theorem}\label{theorem5}
Let $\bm{f}$, $U$, and $\bm{M}$ satisfy Assumption \ref{assu1}, Assumption
\ref{assu2}, and Assumption \ref{assu3}, respectively. Then for $x\in U$, it
holds almost surely that
\begin{eqnarray}\label{e2}
\lim_{Q\rightarrow\infty}\frac{1}{Q-1}\int_1^Q\omega^{m+1}|\bm{u}_1(x,
\omega)|^2 d\omega=0.
\end{eqnarray}
\end{theorem}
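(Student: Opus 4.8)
The plan is to control the frequency average through its expectation and then upgrade to the almost sure statement, mirroring the treatment of $\bm u_0$ in Theorem~\ref{theorem4} but exploiting the extra Green tensor to gain decay. First I would write $\bm u_1(x,\omega)=\int_D\bm K(x,z,\omega)\bm f(z)\,dz$ with the composed kernel $\bm K(x,z,\omega)=\int_D\bm G(x,y,\omega)\bm M(y)\bm G(y,z,\omega)\,dy$. Using that $f_1,f_2$ are centred and uncorrelated with $\mathbb E[f_j(z)f_k(z')]=\delta_{jk}C_{f_j}(z,z')$, the second moment becomes
\[
\mathbb E|\bm u_1(x,\omega)|^2=\sum_{i,j}\int_D\int_D K_{ij}(x,z,\omega)\overline{K_{ij}(x,z',\omega)}\,C_{f_j}(z,z')\,dz\,dz'.
\]
Since each $C_{f_j}$ shares the principal symbol $\phi(x)|\xi|^{-m}$, Lemma~\ref{lemma4} lets me replace every kernel by its explicit singular part ($|z-z'|^{m-2}$ or $\log|z-z'|$) plus a H\"older remainder.

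Next I would extract the large-$\omega$ behaviour of $\bm K$. Because $x\in U$ has positive distance to $D$ (Assumption~\ref{assu2}), the outer tensor $\bm G(x,y,\omega)$ is smooth in $y$, so I insert the large-argument Hankel asymptotics (\ref{b15})--(\ref{b17}), writing it as $\omega^{-1/2}$ times oscillatory factors $e^{\mathrm i\kappa_{\rm s}|x-y|}$, $e^{\mathrm i\kappa_{\rm p}|x-y|}$ with smooth amplitudes plus a controlled remainder; the inner tensor $\bm G(y,z,\omega)$ is split into its oscillatory far part and its diagonal singular part near $y=z$. The $y$-integral is then a sum of oscillatory integrals with phases $c_\alpha|x-y|+c_\beta|y-z|$, $\alpha,\beta\in\{{\rm p},{\rm s}\}$. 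The key structural point is that the mixed-speed phases ($\alpha\neq\beta$) have no stationary point, since $c_{\rm p}\neq c_{\rm s}$ forces $-c_\alpha\frac{x-y}{|x-y|}+c_\beta\frac{y-z}{|y-z|}\neq 0$; by non-stationary phase these are $O(\omega^{-\infty})$. The matched-speed phases are stationary along the ray from $z$ through $x$, and a degenerate stationary-phase estimate, carried out carefully near the diagonal, shows $\bm K$ decays one full power of $\omega$ faster than a single Green tensor. Combined with the $\omega^{-m}$ contributed by the symbol of $C_{f_j}$ at frequency $\sim\omega$, this yields the expected rate $\mathbb E|\bm u_1(x,\omega)|^2\lesssim\omega^{-(m+3)}$, so that $\omega^{m+1}\mathbb E|\bm u_1(x,\omega)|^2\lesssim\omega^{-2}$ for $\omega\geq1$.

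The almost sure upgrade is then immediate from monotonicity. Since $\int_1^\infty\omega^{m+1}\mathbb E|\bm u_1(x,\omega)|^2\,d\omega\lesssim\int_1^\infty\omega^{-2}\,d\omega<\infty$, Tonelli's theorem gives $Y(x):=\int_1^\infty\omega^{m+1}|\bm u_1(x,\omega)|^2\,d\omega<\infty$ almost surely. As $Q\mapsto\int_1^Q\omega^{m+1}|\bm u_1(x,\omega)|^2\,d\omega$ is nondecreasing and bounded above by $Y(x)$, dividing by $Q-1\to\infty$ forces the limit in (\ref{e2}) to vanish almost surely.

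I expect the \emph{main obstacle} to be the microlocal analysis of $\bm K$ in the second step: one must simultaneously handle the two distinct wave speeds, the degenerate critical set emanating from $z$, and the non-smoothness of both the phase $|y-z|$ and the amplitude $\bm G(y,z,\omega)$ at the diagonal $y=z$. Showing that these competing effects still leave the promised extra half power of decay, enough to overcome the $\omega^{m+1}$ weight, is where the tedious calculations reside; the probabilistic conclusion follows at once.
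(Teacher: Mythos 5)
Your probabilistic endgame is sound: if one can prove $\int_1^\infty\omega^{m+1}\mathbb E|\bm u_1(x,\omega)|^2\,d\omega<\infty$, then Tonelli plus monotonicity of $Q\mapsto\int_1^Q\omega^{m+1}|\bm u_1|^2\,d\omega$ does give (\ref{e2}) almost surely, and your heuristic rate $\mathbb E|\bm u_1|^2\lesssim\omega^{-(m+3)}$ is indeed the expected one for the \emph{leading} oscillatory part of the composed kernel (matched-speed phases stationary along the segment from $x$ through $z$, mixed-speed phases non-stationary). The genuine gap is that your route converts the Ces\`aro statement (\ref{e2}) into the strictly stronger statement of full integrability, and therefore needs the rate $\omega^{-(m+3)}$ (or at least $o(\omega^{-(m+2)})$, integrably) for \emph{every} piece of $\bm u_1$ — in particular for the remainder produced by the near-diagonal region $|y-z|\lesssim\omega^{-1}$ of the inner Green tensor, where non-stationary phase fails (no smoothness) and where no oscillation in $z$ survives to extract the $\omega^{-m}$ gain from the covariance. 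The paper's estimate of exactly this piece — measuring $\bm G-\bm G_0$ in $H^{\varepsilon,\tilde p}(D\times D)$ and pairing against $\bm G_0\bm M f$, see (\ref{e11}) and the analysis of $I^{(1)}_{111}$ — is only $O(\omega^{-2+\varepsilon})$ almost surely, and this order is essentially capped by the method: pushing the Hankel expansion to the next order ($N=1$) produces the singularity $|y-z|^{-5/2}$, which lies in no $L^{\tilde p}$, $\tilde p>1$. A remainder of size $\omega^{-2+\varepsilon}$ contributes $\omega^{m+1}\cdot\omega^{-4+2\varepsilon}=\omega^{m-3+2\varepsilon}$ to your integrand, which is \emph{not} integrable on $[1,\infty)$ for any $m\in[2,5/2)$. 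So your Tonelli step breaks precisely on the terms you defer to the ``tedious calculations'': the plan works only if you can either prove remainder bounds strictly better than $\omega^{-(m+2)/2}$, or show that the remainders themselves retain enough oscillation to gain $\omega^{-m}$ when paired with $C_f$; your sketch addresses neither.

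This threshold issue is exactly why the paper takes a different, weaker-demand route. It first performs deterministic reductions $\bm u_1\to\bm u_{1,r}=\bm v_1\omega^{-1}+\bm v_2\omega^{-2}+\bm v_3\omega^{-3}$, disposing of all remainders through the Ces\`aro average itself, for which $O(\omega^{-2+\varepsilon})$ suffices. Only the critical term $\bm v_1$ is treated probabilistically, and not by pointwise moment bounds: each constituent oscillatory integral is written as a one-dimensional Fourier transform $g(x,\omega)=[\mathcal F^{-1}S](-\omega)$, with $S(t)$ living on the level sets $\Gamma_t$ of the phase via the coarea formula; then $\int_1^\infty\omega^{m-2}|\bm v_1|^2\,d\omega<\infty$ a.s.\ follows from Parseval with mollification ($m=2$) or from the Besov-type estimate $\mathbb E|S(t)-S(t')|^2\lesssim|t-t'|^{(m-1)/2}$ ($m\in(2,5/2)$), and the $\min(1,\omega/Q)$ dominated-convergence trick finishes. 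Note the reduced weight $\omega^{m-2}$ rather than $\omega^{m-1}$, let alone $\omega^{m+1}$: the paper never needs, and never proves, the integrability your argument requires. Two further points your sketch would have to repair even for the main term: for $m=2$ the source is a genuine distribution, so your second-moment identity requires the mollification $\tilde f_\delta=\tilde f*\rho_\delta$ and a limiting argument, and $\bm M$ is only $C_0^1$, which obstructs the quantitative stationary-phase remainder expansions your kernel analysis presumes.
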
 
\begin{proof}
Recall the Green tensor in (\ref{c1}), a direct computation shows 
\begin{align}\notag
\bm{G}(x,y,\omega)=&\left(\frac{{\rm i}}{4\mu}H_0^{(1)}(\kappa_{\rm
s}|x-y|)-\frac{{\rm
i}}{4\omega^2}\frac{1}{|x-y|}\Gamma_1(x-y,\omega)\right)\bm{I}\\\label{e3}
&+\frac{{\rm i}}{4\omega^2}\frac{1}{|x-y|^2}\Gamma_2(x-y,\omega)(x-y)\cdot
(x-y)^\top,
\end{align}
where $x-y=(x_1-y_1,x_2-y_2)^\top$ and $\Gamma_1,\Gamma_2$ are given in
(\ref{b12}), (\ref{b13}). Noting the definition of $H_{n,N}^{(1)}$ in
(\ref{b16}), we define the notations $\Theta_n(z,\omega):=\kappa_{\rm
s}^nH_{n,0}^{(1)}(\kappa_{\rm s}|z|)-\kappa_{\rm p}^nH_{n,0}^{(1)}(\kappa_{\rm
p}|z|)$, 
\begin{align}\notag
\bm{G}_0(x,y,\omega)=&\left(\frac{{\rm i}}{4\mu}H_{0,0}^{(1)}(\kappa_{\rm
s}|x-y|)-\frac{{\rm
i}}{4\omega^2}\frac{1}{|x-y|}\Theta_1(x-y,\omega)\right)\bm{I}\\\label{e4}
&+\frac{{\rm i}}{4\omega^2}\frac{1}{|x-y|^2}\Theta_2(x-y,\omega)(x-y)\cdot
(x-y)^\top,
\end{align}
and
\begin{eqnarray*}\label{e5}
\bm{u}_{1,l}(x,\omega):=\int_D\int_D\bm{G}_0(x,y,\omega)\bm{M}(y)\bm{G}(y,z,
\omega) \bm{f}(z)dydz,\quad x\in U.
\end{eqnarray*}

Now we estimate the order of the difference $\bm{u}_1-\bm{u}_{1,l}$ with respect
to the angular frequency $\omega$. A simple calculation yields 
\begin{eqnarray*}
&&|\bm{u}_1(x,\omega)-\bm{u}_{1,l}(x,\omega)|\\
&=&\left|\int_{D}\left(\bm{G}(x,y,\omega)-\bm{G}_0(x,y,\omega)\right)\bm{M}
(y)\int_D\bm{G}(y,z,\omega)\bm{f}(z)dzdy\right|\\
&\lesssim &\|\bm{G}(x,y,\omega)-\bm{G}_0(x,y,\omega)\|_{L^{p'}(D)^{2\times
2}}\|H_{\omega}\bm{f}\|_{L^p(D)^2}.
\end{eqnarray*}
Since $x\in U$, $y\in D$, there exists $c_1, c_2>0$ such that $c_1<|x-y|<c_2$.
By (\ref{b17}), we have 
\begin{eqnarray}\label{e6}
\|\Gamma_{n,0}(\kappa|x-\cdot|)\|_{L^{p'}(D)}\lesssim \kappa^{-\frac{3}{2}}.
\end{eqnarray} 
A direct computation shows that  $\nabla\Gamma_{n,0}(\kappa|x-\cdot|)\lesssim
\kappa^{-\frac{1}{2}}$. Hence
\begin{eqnarray}\label{e7}
\|\nabla\Gamma_{n,0}(\kappa|x-\cdot|)\|_{L^{p'}(D)}\lesssim
\kappa^{-\frac{1}{2}}.
\end{eqnarray} 
By (\ref{e6}) and (\ref{e7}), we get
\begin{eqnarray*}\label{e8}
\|\Gamma_{n,0}(\kappa|x-\cdot|)\|_{H^{\varepsilon,p'}(D)}\lesssim
\kappa^{-\frac{3}{2}+\varepsilon}.
\end{eqnarray*}
Therefore,
\begin{eqnarray}\label{e9}
\|\bm{G}(x,\cdot,\omega)-\bm{G}_0(x,\cdot,\omega)\|_{H^{\varepsilon,p'}(D)}
\lesssim  \omega^{-\frac{3}{2}+\varepsilon}.
\end{eqnarray}
It follows from Lemma \ref{lemma6} that we obtain 
\begin{eqnarray}\label{e10}
\|H_{\omega}\bm{f}\|_{H^{\varepsilon,p}(D)^2}\leq
\|H_{\omega}\|_{H_0^{-\varepsilon,p'}(D)^2\rightarrow
H^{\varepsilon,p}(D)^2}\|\bm{f}\|_{H_0^{-\varepsilon,p'}(D)^2}\lesssim
\omega^{-1+2(\varepsilon+1-\frac{2}{p})},
\end{eqnarray}
where we use the fact that $\|\bm{f}\|_{H_0^{-\varepsilon,p'}(D)^2}$ is
bounded almost surely. Denoting $\varepsilon_l=3\varepsilon+2(1-\frac{2}{p})$
which can be sufficient small for suitably chosen $\varepsilon$ and $p$ due to
$p\geq 2$ and $0<\varepsilon<\frac{2}{p}$, we have from (\ref{e9}) and
(\ref{e10}) that 
\begin{eqnarray}\label{e11}
|\bm{u}_1(x,\omega)-\bm{u}_{1,l}(x,\omega)|\lesssim
\omega^{-\frac{5}{2}+\varepsilon_l}.
\end{eqnarray}

In order to analyze the term $\bm{u}_{1,l}$, we replace the Green tensor
$\bm{G}(y,z,\omega)$ in $\bm{u}_{1,l}$ by $\bm{G}_0(y,z,\omega)$ and define
\begin{eqnarray*}\label{e12}
\bm{u}_{1,r}(x,\omega)=\int_D\int_D\bm{G}_0(x,y,\omega)\bm{M}(y)\bm{G}_0(y,z,\omega)\bm{f}(z)dydz,\quad x\in U.
\end{eqnarray*}
Next is estimate the order of the difference $\bm{u}_{1,l}-\bm{u}_{1,r}$
which is given by
\begin{align*}
\bm{u}_{1,l}(x,\omega)-\bm{u}_{1,r}(x,\omega)=&
\int_D\int_D\bm{G}_0(x,y,\omega)\bm{M}(y)\left(\bm{G}(y,z,\omega)-\bm{G}_0(y,z,
\omega)\right)\bm{f}(z)dydz\\
=&\Big(\sum_{j,k,l=1}^{2}I^{(1)}_{jkl},
\sum_{j,k,l=1}^{2}I^{(2)}_{jkl}\Big)^\top,
\end{align*}
where
\begin{eqnarray*}
I_{jkl}^{(i)}:=\int_D\int_DG_{0,ij}(x,y,\omega)M_{jk}(y)\left(G_{kl}(y,z,
\omega)-G_{0,kl}(y,z,\omega)\right)f_{l}(z)dydz
\end{eqnarray*}
for $i, j,k, l=1,2$. Here, $G_{ij}$ and $G_{0,ij}$ represent the elements of the
matrix $\bm{G}$ and $\bm{G}_0$, respectively. 

Now we only focus on the analysis of the term $I_{111}^{(1)}$ and show the
details, other terms can be analyzed in a similar way. In the dual sense, we
have
\begin{eqnarray}\label{e14}
I_{111}^{(1)} &=&\langle\bm{G}_{11}(y, z,\omega)-\bm{G}_{0,11}(y,
z,\omega), \notag\\
&&\quad \bm{G}_{0,11}(x,
y,\omega)M_{11}(y)f_1(z)\rangle_{(H^{\varepsilon,\tilde{p}}
(D\times D),H_0^{-\varepsilon,\tilde{p}'}(D\times D))}.
\end{eqnarray}
By (\ref{e3}) and (\ref{e4}), we can split
$\bm{G}_{11}(y,z,\omega)-\bm{G}_{0,11}(y,z,\omega)$ into three terms
\begin{eqnarray*}\label{e15}
\bm{G}_{11}(y,z,\omega)-\bm{G}_{0,11}(y,z,\omega)=g_0(y-z,\omega)+g_1(y-z,
\omega)+g_2(y-z,\omega),
\end{eqnarray*}
with 
\begin{eqnarray*}
g_0(y-z,\omega)&=&\frac{{\rm i}}{4\mu}\Gamma_{0,0}(\kappa_{\rm
s}|y-z|),\\
g_1(y-z,\omega)&=&-\frac{{\rm i}}{4\omega^2}\frac{1}{|y-z|}[\kappa_{\rm
s}\Gamma_{1,0}(\kappa_{\rm s}|y-z|)-\kappa_{\rm p}\Gamma_{1,0}(\kappa_{\rm
p}|y-z|)],\\
g_2(y-z,\omega)&=&\frac{{\rm
i}}{4\omega^2}\frac{(y_1-z_1)^2}{|y-z|^2}[\kappa_{\rm
s}^2\Gamma_{2,0}(\kappa_{\rm s}|y-z|)-\kappa_{\rm p}^2\Gamma_{2,0}(\kappa_{\rm
p}|y-z|)].
\end{eqnarray*}
Note $y,z\in D$ and $D$ is a bounded domain. Next is to estimate the term 
$\|\bm{G}_{11}(y,z,\omega)-\bm{G}_{0,11}(y,z,\omega)\|_{H^{\varepsilon,\tilde{p}
}(D\times D)}$, we only need to estimate
$\|g_j(z,\omega)\|_{H^{\varepsilon,\tilde{p}}(B)}, j=0,1,2$ for some bounded
domain containing the origin.

We analyze the three terms one by one. For large $\kappa_{\rm s}|z|$, it is
easy to note from (\ref{b17}) that
\begin{eqnarray}\label{e19}
|g_0(z,\omega)|\lesssim (\kappa_{\rm s}|z|)^{-\frac{3}{2}}.
\end{eqnarray}
For small $\kappa_{\rm s}|z|$, using (\ref{b8}) and (\ref{b16}) gives that
\begin{eqnarray}\label{e20}
|g_0(z,\omega)|\lesssim (\kappa_{\rm s}|z|)^{-\frac{1}{2}}=(\kappa_{\rm
s}|z|)^{-\frac{3}{2}}(\kappa_{\rm s}|z|)\lesssim(\kappa_{\rm
s}|z|)^{-\frac{3}{2}}.
\end{eqnarray}
By (\ref{e19}) and (\ref{e20}), we obtain
\begin{eqnarray}\label{e21}
\|g_0(z,\omega)\|^{\tilde{p}}_{L^{\tilde{p}}(B)}\lesssim\int_B\omega^{-\frac{3}{
2 }\tilde{p}}|z|^{-\frac{3}{2}\tilde{p}}dz\lesssim
\omega^{-\frac{3}{2}\tilde{p}}\int_0^Rr^{1-\frac{3}{2}\tilde{p}}dr\lesssim
\omega^{-\frac{3}{2}\tilde{p}},
\end{eqnarray}
holds for $\tilde{p}<\frac{4}{3}$, where $R:=\max\{|z|, z\in B\}$. Since
\begin{align*}
&\nabla g_0(z,\omega) = \frac{{\rm i}}{4\mu}\nabla\left(H_0^{(1)}(\kappa_{\rm
s}|z|)-a_0^{(0)}\sqrt{\frac{1}{\kappa_{\rm s}|z|}}e^{{\rm i}(\kappa_{\rm
s}|z|-\frac{\pi}{4})}\right)\\
=&\frac{{\rm i}}{4\mu}\frac{z}{|z|}\left(-\kappa_{\rm s}H_1^{(1)}(\kappa_{\rm
s}|z|)+\frac{1}{2}a_0^{(0)}\kappa_{\rm
s}^{-\frac{1}{2}}|z|^{-\frac{3}{2}}e^{{\rm i}(\kappa_{\rm
s}|z|-\frac{\pi}{4})}-{\rm i}a_0^{(0)}\kappa_{\rm
s}^{\frac{1}{2}}|z|^{-\frac{1}{2}}e^{{\rm i}(\kappa_{\rm
s}|z|-\frac{\pi}{4})}\right).
\end{align*}
Hence, we have for large $\kappa_{\rm s}|z|$ that 
\begin{eqnarray}\label{e23}
|\nabla g_0(z,\omega)|\lesssim \kappa_{\rm s}^{\frac{1}{2}}|z|^{-\frac{1}{2}}+
\kappa_{\rm s}^{-\frac{1}{2}}|z|^{-\frac{3}{2}}+ \kappa_{\rm
s}^{\frac{1}{2}}|z|^{-\frac{1}{2}}\lesssim  \kappa_{\rm
s}^{\frac{1}{2}}|z|^{-\frac{1}{2}}.
\end{eqnarray}
For small $\kappa_{\rm s}|z|$, we get
\begin{eqnarray}\label{e24}
|\nabla g_0(z,\omega)|\lesssim \kappa_{\rm s}(\kappa_{\rm s}|z|)^{-1}+
\kappa_{\rm s}^{-\frac{1}{2}}|z|^{-\frac{3}{2}}+ \kappa_{\rm
s}^{\frac{1}{2}}|z|^{-\frac{1}{2}}\lesssim  \kappa_{\rm
s}^{-\frac{1}{2}}|z|^{-\frac{3}{2}}.
\end{eqnarray}
By (\ref{e23}) and (\ref{e24}), we conclude for $\tilde{p}<\frac{4}{3}$ that
\begin{eqnarray}\label{e25}
\|\nabla g_0(z,\omega)\|^{\tilde{p}}_{L^{\tilde{p}}(B)}\lesssim\int_B\omega^{
\frac{1}{2} \tilde{p}}|z|^{-\frac{1}{2}\tilde{p}}dz+\int_B\omega^{-\frac{1}{2}
\tilde{p}}|z|^ {-\frac{3}{2}\tilde{p}}dz\lesssim
\omega^{\frac{1}{2}\tilde{p}}. 
\end{eqnarray}
Using (\ref{e21}) and (\ref{e25}), we have for
$\tilde{p}<\frac{4}{3}$ that
\begin{eqnarray}\label{e26}
\|g_0(z,\omega)\|_{H^{\varepsilon,\tilde{p}}(B)}\lesssim
\omega^{-\frac{3}{2}+2\varepsilon}. 
\end{eqnarray}
Now we analyze the term $g_1(z,\omega)$ which is given by
\begin{eqnarray*}\label{e27}
g_1(z,\omega)=-\frac{{\rm i}}{4\omega^2}\frac{1}{|z|}[\kappa_{\rm
s}\Gamma_{1,0}(\kappa_{\rm s}|z|)-\kappa_{\rm p}\Gamma_{1,0}(\kappa_{\rm
p}|z|)].
\end{eqnarray*}
For large $\omega |z|$, it follows from (\ref{b17}) that
\begin{eqnarray}\label{e28}
|g_1(z,\omega)|&\lesssim & \omega^{-2}|z|^{-1}[\omega(\omega
|z|)^{-\frac{3}{2}}]\lesssim
\omega^{-5/2}|z|^{-5/2}\notag\\
&=&\frac{(\omega|z|)^{-\frac{3}{2}}}{\omega|z|}
\lesssim (\omega|z|)^{-\frac{3}{2}}.
\end{eqnarray}
For small $\omega |z|$, by (\ref{b7}) and (\ref{b10}), we have 
\begin{eqnarray}\label{e29}
|g_1(z,\omega)|\lesssim (\omega|z|)^{-\frac{3}{2}}.
\end{eqnarray}
Combining (\ref{e28}) and (\ref{e29}) implies  for $\tilde{p}<\frac{4}{3}$ that
\begin{eqnarray*}\label{e30}
\|g_1(z,\omega)\|^{\tilde{p}}_{L^{\tilde{p}}(B)}\lesssim\int_B\omega^{-\frac{3}{
2}\tilde{p}}|z|^{-\frac{3}{2}\tilde{p}}dz\lesssim
\omega^{-\frac{3}{2}\tilde{p}}\int_0^Rr^{1-\frac{3}{2}\tilde{p}}dr\lesssim
\omega^{-\frac{3}{2}\tilde{p}}.
\end{eqnarray*}
For convenience, we split $g_1$ into two parts by
$g_1(z,\omega)=g_{11}(z,\omega)+g_{12}(z,\omega)$ with 
\begin{align*}
&g_{11}(z,\omega)=-\frac{{\rm i}}{4\omega^2}\frac{1}{|z|}\Gamma_1(z,\omega),\\
&g_{12}(z,\omega)=\frac{{\rm i}}{4\omega^2}\frac{1}{|z|}\Theta_1(z,\omega)
=-\frac{\rm i}{4}a_0^{(1)}e^{-\frac{3}{4}\pi{\rm i}}(c_{\rm
p}^{\frac{1}{2}}e^{{\rm i}\kappa_{\rm p}|z|}-c_{\rm s}^{\frac{1}{2}}e^{{\rm
i}\kappa_{\rm s}|z|})\omega^{-\frac{3}{2}}|z|^{-\frac{3}{2}}.
\end{align*}
For large $\omega |z|$, by (\ref{b9}), we have 
\begin{eqnarray}\label{e31}
|g_{11}(x,\omega)|\lesssim \omega^{-\frac{3}{2}}|z|^{-\frac{3}{2}}.
\end{eqnarray}
For small $\omega |z|$, by (\ref{b7}), we have
\begin{eqnarray}\label{e32}
|g_{11}(x,\omega)|\lesssim \left|\ln\frac{\omega|z|}{2}\right|\lesssim
\omega^{-\frac{3}{2}}|z|^{-\frac{3}{2}}.
\end{eqnarray}
Combining (\ref{e31}) and (\ref{e32}) yields for $\tilde{p}<\frac{4}{3}$ that 
\begin{eqnarray}\label{e33}
||g_{11}(z,\omega)||^{\tilde{p}}_{L^{\tilde{p}}(B)}\lesssim\int_B\omega^{-\frac{
3}{2}\tilde{p}}|z|^{-\frac{3}{2}\tilde{p}}dz\lesssim
\omega^{-\frac{3}{2}\tilde{p}}\int_0^Rr^{1-\frac{3}{2}\tilde{p}}dr\lesssim
\omega^{-\frac{3}{2}\tilde{p}}.
\end{eqnarray}
For the $\nabla g_{11}(z,\omega)$, we have
\begin{eqnarray*}
\nabla g_{11}(z,\omega)=\frac{\rm
i}{4\omega^2}\frac{z}{|z|^2}\Gamma_2(z,\omega).
\end{eqnarray*}
For large $\omega|z|$, (\ref{b9}) implies
\begin{eqnarray}\label{e34}
|\nabla g_{11}(z,\omega)|\lesssim  \omega^{-\frac{1}{2}}|z|^{-\frac{3}{2}}.
\end{eqnarray}
For small $\omega |z|$, (\ref{b8}) implies
\begin{eqnarray}\label{e35}
|\nabla g_{11}(z,\omega)|\lesssim
|z|^{-1}\lesssim\omega^{-\frac{1}{2}}|z|^{-\frac{3}{2}}.
\end{eqnarray}
Following (\ref{e34}) and (\ref{e35}), we get for $\tilde{p}<\frac{4}{3}$ that 
\begin{eqnarray}\label{e36}
\|\nabla g_{11}(z,\omega)\|^{\tilde{p}}_{L^{\tilde{p}}(B)}\lesssim\int_B\omega^{
-\frac{1} {2}\tilde{p}}|z|^{-\frac{3}{2}\tilde{p}}dz\lesssim
\omega^{-\frac{1}{2}\tilde{p}}\int_0^Rr^{1-\frac{3}{2}\tilde{p}}dr\lesssim
\omega^{-\frac{1}{2}\tilde{p}}.
\end{eqnarray}
Using (\ref{e33}) and (\ref{e36}), we have that
\begin{eqnarray}\label{e37}
\|g_{11}(z,\omega)\|_{H^{\varepsilon,\tilde{p}}(B)}\lesssim
\omega^{-\frac{3}{2}+\varepsilon}.
\end{eqnarray}

Since
\begin{eqnarray*}
g_{12}(z,\omega)=-\frac{\rm i}{4}a_0^{(1)}e^{-\frac{3}{4}\pi{\rm i}}(c_{\rm
p}^{\frac{1}{2}}e^{{\rm i}\kappa_{\rm p}|z|}-c_{\rm s}^{\frac{1}{2}}e^{{\rm
i}\kappa_{\rm s}|z|})\omega^{-\frac{3}{2}}|z|^{-\frac{3}{2}},
\end{eqnarray*}
it suffices to prove that $\omega^{-\frac{3}{2}}|z|^{-\frac{3}{2}}\in
H^{\varepsilon, \tilde{p}}(B)$. By the Slobodeckij semi-norm, we need
to prove 
\begin{eqnarray}\label{e38}
|\omega^{-\frac{3}{2}}|z|^{-\frac{3}{2}}|_{\varepsilon,\tilde{p},B}^{\tilde{p}}
=\omega^{-\frac{3}{2}\tilde{p}}\int_B\int_B\frac{||z|^{-\frac{3}{2}}-|z'|^{
-\frac{3}{2}}|^{\tilde{p}}}{|z-z'|^{2+\varepsilon\tilde{p}}}dzdz'< \infty.
\end{eqnarray}
To prove (\ref{e38}), we need the following two lemmas, one is the integrability
criterion and the other is Young's inequality for convolutions \cite[Theorem
2.24]{AF}.

\begin{lemma}\label{lemma7}
For the $n-$dimensional space, we have
\begin{eqnarray*}
\int_{|x|\leq 1}\frac{1}{|x|^{\rho}}dx<\infty \quad{\rm if\;\; and\;\; only\;\;
if}\quad \rho< n.
\end{eqnarray*}
\end{lemma}

This lemma is fundamental and can be easily proved by using the polar
coordinates. 

\begin{lemma}\label{lemma8}
Let $s_1,s_2,s_3\geq 1$ and suppose that
$\frac{1}{s_1}+\frac{1}{s_2}+\frac{1}{s_3}=2$. It holds that
\begin{eqnarray*}
\left|\int_{\mathbb R^n}\int_{\mathbb R^n}h_1(x)h_2(x-y)h_3(y)dxdy\right|\leq
\|h_1\|_{s_1}\|h_2\|_{s_2}\|h_3\|_{s_3},
\end{eqnarray*}
for any $h_1\in L^{s_1}(\mathbb R^n)$, $h_2\in L^{s_2}(\mathbb R^n)$, $h_3\in
L^{s_3}(\mathbb R^n)$.
\end{lemma}
Since
\begin{eqnarray*}
&&||z|^{-\frac{3}{2}}-|z'|^{-\frac{3}{2}}|=\left|\frac{(|z'|^{\frac{1}{2}}-|z|^{
\frac{1}{2}})(|z'|+|z'|^{\frac{1}{2}}|z|^{\frac{1}{2}}+|z|)}{|z|^{\frac{3}{2}}
|z'|^{\frac{3}{2}}}\right|\\
&&\leq\left|\frac{(|z'|-|z|)(|z'|^{\frac{1}{2}}+|z|^{\frac{1}{2}})^2}{|z|^{\frac
{3}{2}}|z'|^{\frac{3}{2}}(|z'|^{\frac{1}{2}}+|z|^{\frac{1}{2}})}\right|\leq
\frac{|z'-z|(|z'|^{\frac{1}{2}}+|z|^{\frac{1}{2}})}{|z|^{\frac{3}{2}}|z'|^{\frac
{3}{2}}},
\end{eqnarray*}
hence,
\begin{eqnarray*}
&&\int_B\int_B\frac{||z|^{-\frac{3}{2}}-|z'|^{-\frac{3}{2}}|^{\tilde{p}}}{
|z-z'|^{2+\varepsilon\tilde{p}}}dzdz'
\leq\int_B\int_B\frac{(|z'|^{\frac{1}{2}}+|z|^{\frac{1}{2}})^{\tilde{p}}}{|z|^{
\frac{3}{2}\tilde{p}}|z'|^{\frac{3}{2}\tilde{p}}|z-z'|^{2+\varepsilon\tilde{p}
-\tilde{p}}}dzdz'\\
&&\lesssim\int_B\int_B\frac{1}{|z|^{\tilde{p}}|z'|^{\frac{3}{2}\tilde{p}}|z-z'|^
{2+\varepsilon\tilde{p}-\tilde{p}}}dzdz'+\int_B\int_B\frac{1}{|z|^{\frac{3}{2}
\tilde{p}}|z'|^{\tilde{p}}|z-z'|^{2+\varepsilon\tilde{p}-\tilde{p}}}
dzdz'\\
&&:=I_1+I_2.
\end{eqnarray*}
We choose $\tilde{p}=\frac{10}{9}$, $\varepsilon=\frac{1}{5}$,
$s_1=\frac{89}{50}$, $s_2=\frac{59}{50}$ and $s_3=\frac{5251}{3102}$, then we
have $\frac{1}{s_1}+\frac{1}{s_2}+\frac{1}{s_3}=2$ and $\tilde{p}s_1<2$,
$\frac{3}{2}\tilde{p}s_2<2$, $(2+\varepsilon\tilde{p}-\tilde{p})s_3<2$. A direct
application of Lemmas \ref{lemma7} and \ref{lemma8} leads to 
\begin{eqnarray*}
I_1&=&\int_{\mathbb R^2}\int_{\mathbb
R^2}\chi_{B}(z)|z|^{-\tilde{p}}\chi_B(z')|z'|^{-\frac{3}{2}\tilde{p}}\chi_{B_{2R
}}(z-z')|z-z'|^{-(2+\varepsilon\tilde{p}-\tilde{p})}dzdz'\\
&\leq&
\||z|^{-\tilde{p}}\|_{s_1}\||z|^{-\frac{3}{2}\tilde{p}}\|_{s_2}\||z|^{
-(2+\varepsilon\tilde{p}-\tilde{p})}\|_{s_3}<\infty,
\end{eqnarray*}
where $B_{2R}$ is the ball with radius $2R$ and center at the origin, and
$\chi_B$ is the characteristic function of the domain $B$ which equals to 1 in
$B$ and vanishes outside of $B$. We can prove $I_2<\infty$ by a similar
argument. Therefore, 
\begin{eqnarray}\label{e39}
\|g_{12}(z,\omega)\|_{H^{\frac{1}{5},\frac{10}{9}}(B)}\lesssim
\omega^{-\frac{3}{2}},
\end{eqnarray}

Next we analyze the term $g_2(z,\omega)$ which is given by
\begin{eqnarray*}\label{e40}
g_2(z,\omega)=\frac{{\rm i}}{4\omega^2}\frac{z_1^2}{|z|^2}[\kappa_{\rm
s}^2\Gamma_{2,0}(\kappa_{\rm s}|z|)-\kappa_{\rm p}^2\Gamma_{2,0}(\kappa_{\rm
p}|z|)].
\end{eqnarray*}
For large $\omega|z|$,  (\ref{b17}) shows that
\begin{eqnarray}\label{e41}
|g_2(z,\omega)|\lesssim \frac{1}{\omega^2}\left(\kappa_{\rm s}^2(\kappa_{\rm
s}|z|)^{-\frac{3}{2}}+\kappa_{\rm p}^2(\kappa_{\rm
p}|z|)^{-\frac{3}{2}}\right)\lesssim (\omega|z|)^{-\frac{3}{2}}.
\end{eqnarray}
For small $\omega|z|$, from (\ref{b13}) we have 
\begin{eqnarray}\label{e42}
|g_2(z,\omega)|\lesssim (\kappa_{\rm s}|z|)^{-\frac{1}{2}}+ (\kappa_{\rm
p}|z|)^{-\frac{1}{2}}\lesssim  (\omega |z|)^{-\frac{1}{2}}\lesssim 
(\omega|z|)^{-\frac{3}{2}}.
\end{eqnarray}
Thus, (\ref{e41}) and (\ref{e42}) implies for $\tilde{p}<\frac{4}{3}$ that
\begin{eqnarray*}\label{e43}
||g_2(z,\omega)||^{\tilde{p}}_{L^{\tilde{p}}(B)}\lesssim\int_B\omega^{-\frac{3}{
2}\tilde{p}}|z|^{-\frac{3}{2}\tilde{p}}dz\lesssim
\omega^{-\frac{3}{2}\tilde{p}}\int_0^Rr^{1-\frac{3}{2}\tilde{p}}dr\lesssim
\omega^{-\frac{3}{2}\tilde{p}}.
\end{eqnarray*}
A direct computation shows that
\begin{eqnarray*}
&&\nabla g_2(z,\omega)=\frac{\rm
i}{2\omega^2}z_1\bm{e}_1a_0^{(2)}|z|^{-\frac{5}{2}}e^{-\frac{5}{4}\pi{\rm
i}}\left[\kappa_{\rm p}^{\frac{3}{2}}e^{{\rm i}\kappa_{\rm p}|z|}-\kappa_{\rm
s}^{\frac{3}{2}}e^{{\rm i}\kappa_{\rm s}|z|}\right]\\
&&\quad +\frac{\rm
i}{2\omega^2}\frac{z_1}{|z|^2}\bm{e}_1\Gamma_2(z,\omega)-\frac{\rm
i}{4\omega^2}\frac{z}{|z|}\frac{z_1^2}{|z|^2}\Gamma_3(z,\omega)
+\frac{\rm
i}{4\omega^2}a_0^{(2)}\frac{z_1^2}{|z|^2}\frac{z}{|z|}e^{-\frac{5}{4}\pi{\rm
i}}\times\\
&&\quad \left[\left(\frac{5}{2}\kappa_{\rm
s}^{\frac{3}{2}}|z|^{-\frac{3}{2}}-{\rm
i}\kappa_{\rm s}^{\frac{5}{2}}|z|^{-\frac{1}{2}}\right)e^{{\rm i}\kappa_{\rm
s}|z|}-\left(\frac{5}{2}\kappa_{\rm p}^{\frac{3}{2}}|z|^{-\frac{3}{2}}-{\rm
i}\kappa_{\rm p}^{\frac{5}{2}}|z|^{-\frac{1}{2}}\right)e^{{\rm i}\kappa_{\rm
p}|z|}\right].
\end{eqnarray*}
For large $\omega |z|$, from (\ref{b9}) we know
\begin{eqnarray}\label{e44}
|\nabla
g_2(z,\omega)|\lesssim\omega^{-\frac{1}{2}}|z|^{-\frac{3}{2}}+\omega^{\frac{1}{2
}}|z|^{-\frac{1}{2}}\lesssim \omega^{\frac{1}{2}}|z|^{-\frac{1}{2}}.
\end{eqnarray}
For small $\omega |z|$, from (\ref{b13}) and (\ref{b14}), we obtain 
\begin{eqnarray}\label{e45}
|\nabla
g_2(z,\omega)|\lesssim\omega^{-\frac{1}{2}}|z|^{-\frac{3}{2}}+\omega^{\frac{1}{2
}}|z|^{-\frac{1}{2}}+|z|^{-1}\lesssim \omega^{-\frac{1}{2}}|z|^{-\frac{3}{2}}.
\end{eqnarray}
By (\ref{e44}) and (\ref{e45}), we conclude for $\tilde{p}<\frac{4}{3}$ that
\begin{eqnarray*}\label{e46}
\|\nabla
g_2(z,\omega)\|^{\tilde{p}}_{L^{\tilde{p}}(B)}\lesssim\int_B\omega^{\frac{1}{2}
\tilde{p}}|z|^{-\frac{1}{2}\tilde{p}}dz+\int_B\omega^{-\frac{1}{2}\tilde{p}}|z|^
{-\frac{3}{2}\tilde{p}}dz\lesssim \omega^{\frac{1}{2}\tilde{p}}.
\end{eqnarray*}
Using (\ref{e44}) and (\ref{e47}) and interpolation, we have for
$\tilde{p}<\frac{4}{3}$ that
\begin{eqnarray}\label{e47}
\|g_2(z,\omega)\|_{H^{\varepsilon,\tilde{p}}(B)}\lesssim
\omega^{-\frac{3}{2}+2\varepsilon}.
\end{eqnarray}
Noting that $D$ is a bounded domain, and combining (\ref{e26}),
(\ref{e37}), (\ref{e39}), and (\ref{e47}), we obtain for any $\varepsilon\in
(0,\frac{1}{5}]$ and $\tilde{p}\in [1,\frac{10}{9}]$ that 
\begin{eqnarray*}
\|\bm{G}_{11}(y,z,\omega)-\bm{G}_{0,11}(y,z,\omega)\|_{H^{\varepsilon,\tilde{p}
}(D\times D)}\lesssim \omega^{-\frac{3}{2}+2\varepsilon}.
\end{eqnarray*}
Since $\bm{G}_{0,11}(x,y,\omega)$ is smooth for $x\in U$ and $y\in
D$, $M_{11}(y)\in C_0^1(\overline{D})$, and $f_1(z)\in
H^{-\varepsilon,\tilde{p}}(D)$ for any $\varepsilon>0$ and $1<\tilde{p}<\infty$,
we have $\bm{G}_{0,11}(x,y,\omega)M_{11}(y)f_1(z)\in
H_0^{-\varepsilon,\tilde{p}}(D\times D)$. Moreover, 
\begin{eqnarray*}
\bm{G}_{0,11}(x,y,\omega)&=&\frac{\rm i}{4\mu}\frac{e^{-\frac{\pi}{4}{\rm
i}}}{|x-y|^{\frac{1}{2}}}a_0^{(0)}c_{\rm s}^{-\frac{1}{2}}e^{{\rm i}\kappa_{\rm
s}|x-y|}\omega^{-\frac{1}{2}}-\frac{\rm i}{4}\frac{e^{-\frac{3}{4}\pi{\rm
i}}}{|x-y|^{\frac{3}{2}}}a_0^{(1)}\\
&&\times\left(c_{\rm s}^{\frac{1}{2}}e^{{\rm
i}\kappa_{\rm s}|x-y|}-c_{\rm p}^{\frac{1}{2}}e^{{\rm i}\kappa_{\rm
p}|x-y|}\right)\omega^{-\frac{3}{2}}\\
&&+\frac{\rm i}{4}\frac{e^{-\frac{5}{4}\pi{\rm
i}}(x_1-y_1)^2}{|x-y|^{5/2}}a_0^{(2)}\left(c_{\rm s}^{\frac{3}{2}}e^{{\rm
i}\kappa_{\rm s}|x-y|}-c_{\rm p}^{\frac{3}{2}}e^{{\rm i}\kappa_{\rm
p}|x-y|}\right)\omega^{-\frac{1}{2}}.
\end{eqnarray*}
Thus, we obtain for sufficient large $\omega$ that
\begin{eqnarray}\label{e48}
\|\bm{G}_{0,11}(x,y,\omega)M_{11}(y)f_1(z)\|_{H_0^{-\varepsilon,\tilde{p}}
(D\times D)}\lesssim \omega^{-\frac{1}{2}}.
\end{eqnarray}
Substituting (\ref{e47}) and (\ref{e48}) into (\ref{e14}) yields
$|I_{111}^{(1)}|\lesssim \omega^{-2+\varepsilon}$ holds for any $\varepsilon\in
(0,\frac{1}{5}]$. 

Using similar proofs, we can obtain estimates for
$I_{112}^{(1)}$, $\cdots$, $I_{222}^{(2)}$ and get 
\begin{eqnarray*}\label{e49}
|\bm{u}_{1,l}(x,\omega)-\bm{u}_{1,r}(x,\omega)|\lesssim \omega^{-2+\varepsilon}.
\end{eqnarray*}
Noting (\ref{e11}), we have 
\begin{eqnarray*}\label{e50}
|\bm{u}_{1}(x,\omega)-\bm{u}_{1,r}(x,\omega)|\lesssim \omega^{-2+\varepsilon}.
\end{eqnarray*}
Since 
\begin{eqnarray*}
\frac{1}{Q-1}\int_1^Q\omega^{m+1}|\bm{u}_1(x,\omega)|^2d\omega&\lesssim&\frac{2}
{Q-1}\int_1^Q\omega^{m+1}|\bm{u}_{1,r}(x,\omega)|^2d\omega\\
&&+\frac{2}{Q-1}
\int_1^Q\omega^{m-3+2\varepsilon}d\omega
\end{eqnarray*}
and 
\[
\frac{2}{Q-1}\int_1^Q\omega^{m-3+2\varepsilon}d\omega\to 0
\]
for $m\in [2,5/2)$ and small enough $\varepsilon$. To prove (\ref{e2}), it is
sufficient to prove 
\begin{eqnarray}\label{e51}
\lim_{Q\to\infty}\frac{1}{Q-1}\int_1^Q\omega^{m+1}|\bm{u}_{1,r}(x,
\omega)|^2d\omega=0,\quad x\in U. 
\end{eqnarray}

It follows from a straightforward but tedious calculation that the
vector $\bm{u}_{1,r}(x,\omega)$ can be decomposed into three parts according to
the order of $\omega$ in the following form
\begin{eqnarray}\label{e52}
\bm{u}_{1,r}(x,\omega)=\bm{v}_1(x,\omega)\omega^{-1}+\bm{v}_2(x,\omega)\omega^{
-2}+\bm{v}_3(x,\omega)\omega^{-3},
\end{eqnarray}
where
\begin{align*}
&\bm{v}_1(x,\omega)=-\frac{e^{-\frac{\pi}{2}{\rm i}}}{16\mu^2c_{\rm
s}}{a^{(0)}_0}^2\int_D\int_D e^{{\rm i}\kappa_{\rm
s}(|x-y|+|y-z|)}\frac{\bm{M}(y)\bm{f}(z)}{|x-y|^{\frac{1}{2}}|y-z|^{\frac{1}{2}}
}dydz
-\frac{e^{-\frac{3}{2}\pi{\rm i}}}{16\mu}a_0^{(0)}a_0^{(2)}\\
&\quad\times\bigg[\int_D\int_D\left(c_{\rm s}e^{{\rm i}\kappa_{\rm
s}(|x-y|+|y-z|)}-c_{\rm p}^{\frac{3}{2}}c_{\rm s}^{-\frac{1}{2}}e^{{\rm
i}(\kappa_{\rm s}|x-y|+\kappa_{\rm
p}|y-z|)}\right)\frac{\bm{M}(y)\bm{J}(y-z)\bm{f}(z)}{|x-y|^{\frac{1}{2}}|y-z|^{
\frac{5}{2}}}dydz\\
&\quad+\int_D\int_D\left(c_{\rm s}e^{{\rm i}\kappa_{\rm s}(|x-y|+|y-z|)}-c_{\rm
p}^{\frac{3}{2}}c_{\rm s}^{-\frac{1}{2}}e^{{\rm i}(\kappa_{\rm
s}|y-z|+\kappa_{\rm
p}|x-y|)}\right)\frac{\bm{J}(x-y)\bm{M}(y)\bm{f}(z)}{|x-y|^{\frac{5}{2}}|y-z|^{
\frac{1}{2}}}dydz\bigg]\\
&\quad+\frac{e^{-\frac{5}{2}\pi{\rm
i}}}{16}{a_0^{(2)}}^2\int_D\int_D\bigg(-c_{\rm
s}^3e^{{\rm i}\kappa_{\rm s}(|x-y|+|y-z|)}+c_{\rm s}^{\frac{3}{2}}c_{\rm
p}^{\frac{3}{2}}e^{{\rm i}(\kappa_{\rm s}|x-y|+\kappa_{\rm p}|y-z|)}\\
&\quad+c_{\rm s}^{\frac{3}{2}}c_{\rm p}^{\frac{3}{2}}e^{{\rm
i}(\kappa_{\rm p}|x-y|+\kappa_{\rm s}|y-z|)}-c_{\rm p}^3e^{{\rm i}\kappa_{\rm
p}(|x-y|+|y-z|)}\bigg) \frac{\bm{J}(x-y)\bm{M}(y)\bm{J}(y-z)\bm{f}(z)}{|x-y|^{
\frac{5}{2}}|y-z|^{\frac{5}{2}}}dydz,
\end{align*}
\begin{align*}
&\bm{v}_2(x,\omega)=\frac{e^{-\pi{\rm
i}}}{16\mu}a_0^{(0)}a_0^{(1)}\int_D\int_D\bigg(e^{{\rm i}\kappa_{\rm
s}(|x-y|+|y-z|)}-c_{\rm p}^{\frac{1}{2}}c_{\rm s}^{-\frac{1}{2}}e^{{\rm
i}(\kappa_{\rm s}|x-y|+\kappa_{\rm
p}|y-z|)}\bigg)\\
&\quad\times \frac{\bm{M}(y)\bm{f}(z)}{|x-y|^{\frac{1}{2}}|y-z|^{\frac{3}{2}
}}dydz+\frac{e^{-\pi{\rm
i}}}{16\mu}a_0^{(0)}a_0^{(1)}\int_D\int_D\bigg(e^{{\rm
i}\kappa_{\rm s}(|x-y|+|y-z|)}\\
&\quad -c_{\rm p}^{\frac{1}{2}}c_{\rm s}^{-\frac{1}{2}}e^{{\rm i}(\kappa_{\rm
p}|x-y|+\kappa_{\rm
s}|y-z|)}\bigg)\frac{\bm{M}(y)\bm{f}(z)}{|x-y|^{\frac{3}{2}}|y-z|^{\frac{1}{2}}}
dydz+\frac{e^{-2\pi{\rm i}}}{16}a_0^{(1)}a_0^{(2)}\\
&\quad \times\int_D\int_D\bigg(c_{\rm
s}^2e^{{\rm i}\kappa_{\rm s}(|x-y|+|y-z|)}
-c_{\rm s}^{\frac{1}{2}}c_{\rm p}^{\frac{3}{2}}
e^{{\rm i}(\kappa_{\rm s}|x-y|+\kappa_{\rm p}|y-z|)}\\
&\quad -c_{\rm p}^{\frac{1}{2}}c_{\rm s}^{\frac{3}{2}}e^{{\rm
i}(\kappa_{\rm p}|x-y|+\kappa_{\rm s}|y-z|)}+c_{\rm p}^2e^{{\rm i}\kappa_{\rm
p}(|x-y|+|y-z|)}\bigg)\frac{\bm{M}(y)\bm{J}(y-z)\bm{f}(z)}{|x-y|^{\frac{3}{2}}
|y-z|^{\frac{5}{2}}}dydz\\
&\quad+\frac{e^{-2\pi{\rm i}}}{16}a_0^{(1)}a_0^{(2)}\int_D\int_D\bigg(c_{\rm
s}^2e^{{\rm i}\kappa_{\rm s}(|x-y|+|y-z|)}
-c_{\rm s}^{\frac{1}{2}}c_{\rm p}^{\frac{3}{2}}
e^{{\rm i}(\kappa_{\rm p}|x-y|+\kappa_{\rm s}|y-z|)}\\
&\quad-c_{\rm p}^{\frac{1}{2}}c_{\rm s}^{\frac{3}{2}}e^{{\rm
i}(\kappa_{\rm s}|x-y|+\kappa_{\rm p}|y-z|)}+c_{\rm p}^2e^{{\rm i}\kappa_{\rm
p}(|x-y|+|y-z|)}\bigg)\frac{\bm{J}(x-y)\bm{M}(y)\bm{f}(z)}{|x-y|^{\frac{5}{2}}
|y-z|^{\frac{3}{2}}}dydz,
\end{align*}
\begin{align*}
&\bm{v}_3(x,\omega)=\frac{e^{-\frac{3}{2}\pi{\rm
i}}}{16}{a_0^{(1)}}^2\int_D\int_D\bigg(-c_{\rm s}e^{{\rm i}\kappa_{\rm
s}(|x-y|+|y-z|)}+c_{\rm s}^{\frac{1}{2}}c_{\rm p}^{\frac{1}{2}}e^{{\rm
i}(\kappa_{\rm s}|x-y|+\kappa_{\rm p}|y-z|)}\\
&\quad+c_{\rm s}^{\frac{1}{2}}c_{\rm p}^{\frac{1}{2}}e^{{\rm
i}(\kappa_{\rm p}|x-y|+\kappa_{\rm s}|y-z|)}-c_{\rm p}e^{{\rm i}\kappa_{\rm
p}(|x-y|+|y-z|)}\bigg)\frac{\bm{M}(y)\bm{f}(z)}{|x-y|^{\frac{3}{2}}|y-z|^{\frac{
3}{2}}}dydz.
\end{align*}
Here $\bm{J}(x-y)=(x-y)(x-y)^\top$ and  $\bm{J}(y-z)=(y-z)(y-z)^\top$.

By (\ref{e52}) and the Cauchy--Schwartz inequality, we have 
\begin{eqnarray*}
\int_1^Q\omega^{m+1}|\bm{u}_{1,r}(x,\omega)|^2d\omega &\lesssim&
\int_1^Q\big(\omega^{m-1}|\bm{v}_1(x,\omega)|^2\\
&&+\omega^{m-3}|\bm{v}_2(x,
\omega)|^2+\omega^{m-5}|\bm{v}_3(x,\omega)|^2\big)d\omega.
\end{eqnarray*}
Noting the facts that $|x-y|$ has a positive lower bound for $x\in U$, $y\in D$,
$\||y-z|^{-\frac{3}{2}}\|_{H^{\frac{1}{5},\frac{10}{9}}(D\times D)}$ is bounded
from the above analysis about $g_{12}$, $M_{ij}(y)\in C_0^{1}(\overline{D})$,
and $\|f_j(z)\|_{H^{-\frac{1}{5},10}(D)}$ is bounded from the
assumption, we conclude that 
\begin{eqnarray*}
|\bm{v}_2(x,\omega)|<\infty, \quad |\bm{v}_3(x,\omega)|<\infty,\quad x\in
U, ~ \omega\geq 1.
\end{eqnarray*}
Hence, we have as $\omega\to\infty$ that 
\begin{align*}
&\frac{1}{Q-1}\int_1^Q\omega^{m-3}|\bm{v}_2(x,\omega)|^2d\omega\lesssim
\frac{1}{Q-1}\int_1^Q\omega^{m-3}d\omega\to 0,\\
&\frac{1}{Q-1}\int_1^Q\omega^{m-5}|\bm{v}_3(x,\omega)|^2d\omega\lesssim
\frac{1}{Q-1}\int_1^Q\omega^{m-5}d\omega\to 0.
\end{align*}
To prove (\ref{e51}), it suffices to prove that 
\begin{eqnarray}\label{e53}
\lim_{Q\to\infty}\frac{1}{Q-1}\int_1^Q\omega^{m-1}|\bm{v}_1(x,
\omega)|^2d\omega=0.
\end{eqnarray}
We claim that in order to prove (\ref{e53}), it will be enough to show that 
\begin{eqnarray}\label{e54}
\int_1^{\infty}\omega^{m-2}|\bm{v}_1(x,\omega)|^2d\omega<\infty,\quad
\text{almost surely}.
\end{eqnarray}
To show this, we notice that 
\begin{eqnarray*}
\frac{1}{Q}\int_1^Q\omega^{m-1}|\bm{v}_1(x,\omega)|^2d\omega&\leq&
\int_1^Q\frac{\omega}{Q}\omega^{m-2}|\bm{v}_1(x,\omega)|^2d\omega\\
&\leq&\int_1^{\infty}\min(1,\frac{\omega}{Q})\omega^{m-2}|\bm{v}_1(x,
\omega)|^2d\omega.
\end{eqnarray*}
From the dominated convergence theorem, the last integral in the above
inequality converges almost surely to zero as $Q\to\infty$, so the claim
follows. The remaining part of the proof will focus on (\ref{e54}). To this end,
we define 
\begin{eqnarray}\label{e55}
g(x,\omega)&=&\int_D\int_De^{{\rm
i}\omega(c_1|x-y|+c_2|y-z|)}\notag\\
&&\frac{(x_1-y_1)^{p_1}(x_2-y_2)^{p_2} (y_1-z_1)^ { p_3 }
(y_2-z_2)^{p_4}}{|x-y|^{l_1}|y-z|^{l_2}}q(y)\tilde{f}(z)dydz
\end{eqnarray}
where $c_1,c_2>0$, $p_1,...,l_2\geq 0$, $\tilde{f}$ denotes a generalized
Gaussian random field which equals to $f_1$ or $f_2$, and $q(y)\in
C_0^1(\overline{D})$ stands for $M_{ij}(y)$. From the formulation of
$\bm{v}_1(x,\omega)$,  we know that it is a linear combination of $g(x,\omega)$
for different $(l_1,l_2,p_1,p_2,p_3,p_4)\in S$ which is given by
\begin{align*}
S=&\Big\{(\frac{1}{2},\frac{1}{2},0,0,0,0),\;(\frac{1}{2},\frac{5}{2},0,0,2,0),
\;(\frac{1}{2},\frac{5}{2},0,0,1,1),\;(\frac{1}{2},\frac{5}{2},0,0,0,2),\\
&\;\;(\frac{5}{2},\frac{1}{2},2,0,0,0),\;(\frac{5}{2},\frac{1}{2},0,2,0,0),
\;(\frac{5}{2},\frac{1}{2},1,1,0,0),
(\frac{5}{2},\frac{5}{2},2,0,2,0),\\
&\;\;(\frac{5}{2},\frac{5}{2},1,1,1,1),\;(\frac{5}{2},\frac{5}{2},2,0,1,1),
\;(\frac{5}{2},\frac{5}{2},1,1,0,2),\;(\frac{5}{2},\frac{5}{2},2,0,0,2),\\
&\;\;(\frac{5}{2},\frac{5}{2},1,1,2,0),\;(\frac{5}{2},\frac{5}{2},0,2,2,0),
\;(\frac{5}{2},\frac{5}{2},0,2,1,1),\;(\frac{5}{2},\frac{5}{2},0,2,0,2)\Big\}. 
\end{align*}
To prove (\ref{e54}), it is enough to show that 
\begin{eqnarray}\label{e56}
\int_1^{\infty}\omega^{m-2}|g(x,\omega)|^2d\omega<\infty,\quad
\text{almost surely}.
\end{eqnarray}

In the following, we consider two cases. 

Case 1. $m=2$. In this case, Lemma \ref{lemma3} claims that
$\tilde{f}\in H^{-\varepsilon,p}(D)$ almost surely for any $\varepsilon>0$ and
$1<p<\infty$. In order to avoid the distribution dualities, we introduce the
modification $\tilde{f}_{\delta}:=\tilde{f}*\rho_{\delta}$ where
$\rho_{\delta}:=\delta^{-2}\rho\left(\frac{x}{\delta}\right)$,$\rho\in
C_0^{\infty}(\mathbb R^2)$ is a radially symmetric function satisfying
$\int_{\mathbb R^2}\rho(x)dx=1$. We denote $g_{\delta}$ by replacing $\tilde{f}$
by the standard mollification $\tilde{f}_{\delta}$ in (\ref{e55}). Let
$M_{\delta}\tilde{f}:=\tilde{f}_{\delta}$ be the modification operator, and
$C_{\delta}$ be the covariance operator of $\tilde{f}_{\delta}$. Then it is
easy to verify that $C_{\delta}=M_{\delta}C_{\tilde{f}}M_{\delta}$ and
$g_{\delta}(x,\omega)\to g(x,\omega)$ as $\delta\to 0$. To prove (\ref{e56}), we
claim that it is enough to show that
\begin{eqnarray}\label{e57}
\sup_{\delta\in(0,1)}\int_1^{\infty}{\mathbb
E}|g_{\delta}(x,\omega)|^2d\omega<\infty.
\end{eqnarray}
If (\ref{e57}) holds, applying the Fubini theorem and Fatou lemma implies that
\[
{\mathbb E}\left(\int_1^{\infty}|g(x,\omega)|^2d\omega\right)<\infty,
\]
which
shows that (\ref{e56}) holds immediately. So, we focus on the prove of
(\ref{e57}) for this case. To this end, we look at the phase function
$A(y,z)=c_1|x-y|+c_2|y-z|$ for some fixed $x\in U$. It is easy to see that
$A(y,z)$ is smooth on $D\times D$ apart from the subset where $y=z$. A direct
computation shows 
\begin{eqnarray*}
\nabla_y A(y,z)=c_1\frac{y-x}{|y-x|}+c_2\frac{y-z}{|y-z|}, \quad \nabla_z
A(y,z)=c_2\frac{z-y}{|z-y|}.
\end{eqnarray*}
Hence,
\begin{eqnarray*}
|\nabla_y A(y,z)|\leq c_1+c_2, \quad |\nabla_z A(y,z)|\leq
c_2, \quad \forall (y,z)\in D\times D\;\;{\rm and}\;\; y\neq z.
\end{eqnarray*}
Since
\begin{eqnarray}\label{e58}
(y,z)\cdot\nabla A(y,z) &=& c_1\frac{y\cdot
(y-x)}{|y-x|}+c_2|y-z|\notag\\
&=&c_1|y|\cos\theta+c_2|y-z|\geq c_0>0,
\end{eqnarray}
where $\theta$ denotes the angle between $y$ and $y-x$, noting the facts that
the origin belongs to $U$ and $U$ is convex, we have $(y,z)\cdot\nabla A(y,z)$
has a positive lower bound for $(y,z)\in D\times D$ and $y\neq z$. So
\begin{eqnarray}\label{e59}
0<c'_1\leq |\nabla A(y,z)|\leq c'_2<\infty,\quad\forall (y,z)\in D\times
D\;\;{\rm and}\;\; y\neq z.
\end{eqnarray}

Our aim is to express $g_{\delta}(x,\omega)$ as a one-dimensional Fourier
transform and get rid of the variable $\omega$. To this end, we define the
following surface
\begin{eqnarray*}
\Gamma'_t:=\{(y,z)\in D\times D| A(y,z)=t\},\quad t>0.
\end{eqnarray*} 
It is easy to see that there exists smallest and largest values $T_0=T_0(x)$ and
$T_1=T_1(x)$ such that $\Gamma'_t$ is nonempty only for $t\in [T_0, T_1]$. Now
we fix a $\tilde{t}\in [T_0, T_1]$, then there exists $\eta=\eta(\tilde{t})$ and
an open cone $K=K(\tilde{t})\subset {\mathbb R^4}$ with center at the origin
such that for $t_0=\tilde{t}-\eta$ and $t_1=\tilde{t}+\eta$, we have 
\begin{eqnarray*}
D\times D\cap\{t_0<A(y,z)<t_1\}\subset K\cap\{t_0<A(y,z)<t_1\}:=\Gamma.
\end{eqnarray*}
Moreover, since $D$ has a positive distance to the origin we may also choose
$\eta$ and $K$ such that 
\begin{eqnarray}\label{e60}
|y|,|z|\geq c'_3>0 \quad\forall (y,z)\in \Gamma.
\end{eqnarray}
Denote $\Gamma_t=\Gamma\cap\{(y,z):A(y,z)=t\}$. We obtain
$\Gamma=\cup_{t_0\leq t\leq t_1}\Gamma_t$. By (\ref{e58}) and (\ref{e59}), we
deduce that there is a radial stretch $B_t$ yielding a bi-Lipschitz chart $B_t:
F\to \Gamma_t$ over a subdomain $F$ of the unit ball. The bi-Lip constant of
$B_t$ is uniform over $t_0<t<t_1$ and each $B_t$ is actually a local
diffeomorphism apart from $y=z$. By (\ref{e58}) and (\ref{e59}), we may write
$B_t$ in the following form
\begin{eqnarray*}\label{e61}
B_t(w_1,w_2)=\sigma(t,w_1,w_2)(w_1,w_2),
\end{eqnarray*}
where the dependence $(w_1,w_2)\to \sigma(t,w_1,w_2)$ is Lipschitz with respect
to $t$ with a uniform Lipschitz constant with respect to $w_1,w_2$.

Let $h$ be a integrable Borel-function on $\Gamma$, note that
$\Gamma=\cup_{t_0\leq t\leq t_1}\Gamma_t$, we get
\begin{eqnarray}\label{e62}
\int_{\Gamma}h(y,z)dydz=\int_{t_0}^{t_1}\int_{\Gamma_t}h(y,z)\frac{1}{|\nabla
A(y,z)|}d {\mathcal H^3}(y,z)dt,
\end{eqnarray}
where the inner integral is with respect to the three-dimensional Hausdorff
measure on $\Gamma_t$. By a change of variables, we have 
\begin{eqnarray}\label{e63}
\int_{\Gamma_t}h(y,z)d{\mathcal H^3}(y,z)=\int_F
h(B_t(w_1,w_2))E_t(w_1,w_2)d{\mathcal H^3}(w_1,w_2).
\end{eqnarray}
By (\ref{e58}) and (\ref{e59}), the Jacobian $E_t$ in \eqref{e63} satisfies
\begin{eqnarray*}
0<c'_4\leq E_t(w_1,w_2):=\frac{|B_t(w_1,w_2)|^3|\nabla
A(B_t(w_1,w_2))|}{|(w_1,w_2)\cdot \nabla A(B_t(w_1,w_2))|}\leq c'_5<\infty.
\end{eqnarray*} 
Since $B_t(w_1, w_2)$ is Lipschitz with respect to $t$, for our later purpose,
we claim that the dependence $t\to E_t(w_1,w_2)$ is uniformly Lipschitz with
respect to $t$. Using (\ref{e62}), we have
\begin{eqnarray*}
g_{\delta}(x,\omega)&=&\int_D\int_De^{{\rm
i}\omega(c_1|x-y|+c_2|y-z|)}\frac{(x_1-y_1)^{p_1}(x_2-y_2)^{p_2}(y_1-z_1)^{p_3}
(y_2-z_2)^{p_4}}{|x-y|^{l_1}|y-z|^{l_2}}\\
&&\quad \times q(y)\tilde{f}_{\delta}(z)dydz\\
&=&\int_{\Gamma}e^{{\rm
i}\omega(c_1|x-y|+c_2|y-z|)}\frac{(x_1-y_1)^{p_1}(x_2-y_2)^{p_2}(y_1-z_1)^{p_3}
(y_2-z_2)^{p_4}}{|x-y|^{l_1}|y-z|^{l_2}}\\
&&\quad \times q(y)\tilde{f}_{\delta}(z)dydz\\
&=&\int_{t_0}^{t_1}e^{{\rm i}\omega
t}S_{\delta}(t)dt=[\mathcal{F}^{-1}S_{\delta}](-\omega),
\end{eqnarray*}
where $S_{\delta}$ is given by
\begin{eqnarray*}
S_{\delta}(t)&=&\int_{\Gamma_t}\frac{(x_1-y_1)^{p_1}(x_2-y_2)^{p_2}(y_1-z_1)^{
p_3}(y_2-z_2)^{p_4}}{|x-y|^{l_1}|y-z|^{l_2}}\\
&&\quad\times \frac{1}{|\nabla
A(y,z)|}q(y)\tilde{f}_{\delta}(z)d {\mathcal H^3}(y,z).
\end{eqnarray*}
Since $\Gamma_t$ is only nonempty for $t\in[T_0, T_1]$, $S_{\delta}(t)$ is
compactly supported inside $[T_0, T_1]$. For fixed $x\in U$, let $L(x,y)$ be a
smooth cutoff of the
function $\frac{(x_1-y_1)^{p_1}(x_2-y_2)^{p_2}}{|x-y|^{l_1}}$ that vanishes
outside $D$, hence, $L(x,\cdot)\in C_0^{\infty}({\mathbb R^2})$. Thus, we can
rewrite $S_{\delta}(t)$ as 
\begin{eqnarray}\label{e64}
S_{\delta}(t)=\int_{\Gamma_t}\frac{(y_1-z_1)^{p_3}(y_2-z_2)^{p_4}}{|y-z|^{l_2}}
\frac{L(x,y)}{|\nabla A(y,z)|}q(y)\tilde{f}_{\delta}(z)d{\mathcal H^3}(y,z).
\end{eqnarray}
Recall that our aim is to prove
$\sup\limits_{\delta\in(0,1)}\int_1^{\infty}{\mathbb
E}|g_{\delta}(x,\omega)|^2d\omega<\infty.$ It is sufficient to show that for
each $\tilde{t}\in [T_0, T_1]$, there exists a finite constant
$M=M(\tilde{t})<\infty$ such that 
\begin{eqnarray}\label{e65}
{\mathbb E}|S_{\delta}(t)|^2\leq M,\quad\forall\delta\in (0,1)\;\;{\rm
and}\;\;t\in [t_0(\tilde{t}), t_1(\tilde{t})].
\end{eqnarray}
This can be seen by the following facts: by compactness, we can choose finitely
many $\tilde{t}\in [T_0, T_1]$ such that the union set of $[t_0(\tilde{t}),
t_1(\tilde{t})]$ for these $\tilde{t}$ can cover $[T_0, T_1]$.  Hence, for any
$t\in [T_0, T_1]$, we have ${\mathbb E}|S_{\delta}(t)|^2\leq M'$. The Parseval
formula yields 
\begin{eqnarray*}
\sup_{\delta\in(0,1)}\int_1^{\infty}{\mathbb
E}|g_{\delta}(x,\omega)|^2d\omega\lesssim
\sup_{\delta\in(0,1)}\int_{T_0}^{T_1}{\mathbb E}|S_{\delta}(t)|^2dt\leq
M'(T_1-T_0)<\infty.
\end{eqnarray*}

It remains to show (\ref{e65}). By (\ref{e64}), we have
\begin{eqnarray*}
{\mathbb
E}|S_{\delta}(t)|^2&=&\int_{\Gamma_t\times\Gamma_t}\frac{(y_1-z_1)^{p_3}
(y_2-z_2)^{p_4}}{|y-z|^{l_2}}\frac{(y'_1-z'_1)^{p_3}(y'_2-z'_2)^{p_4}}{|y'-z'|^{
l_2}}\\
&&\quad \times\frac{L(x,y)}{|\nabla A(y,z)|}\frac{L(x,y')}{|\nabla A(y',z')|}
q(y)q(y'){\mathbb E}(\tilde{f}_{\delta}(z)\tilde{f}_{\delta}(z'))d{\mathcal
H^3}(y,z)d{\mathcal H^3}(y',z').
\end{eqnarray*}
Noting that ${\mathbb
E}(\tilde{f}_{\delta}(z)\tilde{f}_{\delta}(z'))=C_{\delta}(z,z')$ and
$C_{\delta}=M_{\delta}C_{\tilde{f}}M_{\delta}$, we obtain from Lemma
\ref{lemma4} that for any given $\beta>0$, there is a finite constant
$C'_{\beta}$ such that $C_{\delta}(z,z')\leq C'_{\beta}|z-z'|^{-\beta}$ for any
$\delta\in (0,1)$ and $(z,z')\in D\times D$. Since  $q\in C_0^1(\overline{D})$,
an application of H\"{o}lder's inequality arrives  
\begin{align*}
\sup_{\delta\in (0,1)}{\mathbb E}|S_{\delta}(t)|^2&\lesssim
\int_{\Gamma_t\times\Gamma_t}|z-z'|^{-\beta}(|y-z||y'-z'|)^{-(l_2-p_3-p_4)}d{
\mathcal H^3}(y,z)d{\mathcal H^3}(y',z')\\
&\lesssim\bigg[\int_{\Gamma_t\times\Gamma_t}|z-z'|^{-2\beta}d{\mathcal
H^3}(y,z)d{\mathcal
H^3}(y',z')\bigg]^{\frac{1}{2}}\\
&\quad \times\bigg[\int_{\Gamma_t}|y-z|^{-1}d{\mathcal
H^3}(y,z)\int_{\Gamma_t}|y'-z'|^{-1}d{\mathcal H^3}(y',z')\bigg]^{\frac{1}{2}},
\end{align*}
where we use the fact $l_2-p_3-p_4=\frac{1}{2}$ for
$(l_1,l_2,p_1,p_2,p_3,p_4)\in S$. To show the integral in the right hand side of
the above inequality is bounded, we need the following result
\cite[Lemma 6]{LPS}).

\begin{lemma}\label{lemma9}
Given $\gamma\in(0,2)$ there is a finite constant $c$ such that for every $t\in
[t_0,t_1]$ we have
\begin{eqnarray*}
\int_{\Gamma_t}|y-z|^{-\gamma}d{\mathcal H^3}(y,z)\leq c, \quad 
\int_{\Gamma_t\times\Gamma_t}|\tilde{y}-\tilde{z}|^{-\gamma}d{\mathcal
H^3}(y,z)d{\mathcal H^3}(y',z')\leq c
\end{eqnarray*}
for $(\tilde{y},\tilde{z})=(y,y'), (y,z'),(z,y'),(z,z')$.
\end{lemma}

Choosing $\beta=\frac{1}{2}$ and applying Lemma \ref{lemma9} give
(\ref{e65}). So Theorem \ref{theorem5} holds for the case $m=2$.

Case 2. $m\in (2,5/2)$. By Lemma \ref{lemma3}, we know that in this case the
realizations of $\tilde{f}$ are H\"{o}lder continuous with probability one. So
it is not necessary to introduce the mollification, we define 
\begin{eqnarray*}
S(t)=\int_{\Gamma_t}\frac{(y_1-z_1)^{p_3}(y_2-z_2)^{p_4}}{|y-z|^{l_2}}\frac{L(x,
y)}{|\nabla A(y,z)|}q(y)\tilde{f}(z)d {\mathcal H^3}(y,z).
\end{eqnarray*}
In order to prove (\ref{e56}), i.e.,
$\int_1^{\infty}\omega^{m-2}|g(x,\omega)|^2d\omega<\infty$, note that
$g(x,\omega)=[{\mathcal F}^{-1}S](-\omega)$, it suffices to prove that $S(t)\in
H^{\frac{m-2}{2}}_{\rm homog}(\mathbb R)$ which denotes the homogeneous Sobolev
space. By compactness, it is enough to show that $S(t)\in H^{\frac{m-2}{2}}_{\rm
homog}(t_0(\tilde{t}), t_1(\tilde{t}))$ for each $\tilde{t}\in [T_0, T_1]$.
According to the Besov characterization of the homogeneous Sobolev space, it is
sufficient to show
\begin{eqnarray}\label{e66}
{\mathbb
E}\int_{t_0}^{t_1}\int_{t_0}^{t_1}\frac{|S(t)-S(t')|^2}{|t-t'|^{m-1}}
dtdt'<\infty.
\end{eqnarray}
The Fubini theorem shows that (\ref{e66}) holds as long as for some positive
constant $M$ that 
\begin{eqnarray}\label{e67}
{\mathbb E}|S(t)-S(t')|^2\leq M|t-t'|^{\frac{m-1}{2}},\quad\forall t,t'\in
[t_0(\tilde{t}), t_1(\tilde{t})]. 
\end{eqnarray}
We can rewrite $S(t)$ by 
\begin{eqnarray}\label{e68}
S(t)=\int_{\Gamma_t}N(y,z)L(x,y)\frac{1}{|\nabla A(y,z)|}q(y)\tilde{f}(z)d
{\mathcal H^3}(y,z).
\end{eqnarray}
Recall that the bi-Lipschitz chart $B_t:F\to \Gamma_t$ is given by
\begin{eqnarray*}
B_t(w_1,w_2)=\sigma(t,w_1,w_2)(w_1,w_2):=(y_t(w_1,w_2),z_t(w_1,w_2)).
\end{eqnarray*}
Denote
\begin{eqnarray*}
N_t(y,z) = \frac{(y_1-z_1)^{p_3}(y_2-z_2)^{p_4}}{|y-z|^{l_2}}.
\end{eqnarray*}
By (\ref{e68}), we can rewrite $S(t)$ by 
\begin{eqnarray*}\label{e69}
S(t)=\int_{F}N_t(y_t,z_t)T_t(w_1,w_2)q(y_t)\tilde{f}(z_t)d {\mathcal
H^3}(w_1,w_2),
\end{eqnarray*}
where the function 
\begin{eqnarray*}\label{e70}
T_t(w_1,w_2)=E_t(w_1,w_2)\frac{L(x,y_t)}{|\nabla A(y_t,z_t)|}
\end{eqnarray*}
is uniformly bounded and Lipschitz continuous with respect to $t$. Since 
\begin{eqnarray*}
S(t)-S(t')&=&S_1(t)-S_1(t')+\int_FN_{t'}(y_{t'},z_{t'})[T_t(w_1,w_2)-T_{t'}
(w_1, w_2)]d {\mathcal H^3}(w_1,w_2),
\end{eqnarray*}
where 
\begin{eqnarray*}
S_1(t)=\int_{F}N_t(y_t,z_t)T(w_1,w_2)q(y_t)\tilde{f}(z_t)d {\mathcal
H^3}(w_1,w_2),\quad T(w_1,w_2)=T_t(w_1,w_2),
\end{eqnarray*}
we have 
\begin{align*}
\|S(t)-S(t')\|_{L^2(\Omega)}\lesssim & \|S_1(t)-S_1(t')\|_{L^2(\Omega)}+\\
&\quad |t-t'|\int_F|q(y_{t'})|\|\tilde{f}(z_{t'})\|_{L^2(\Omega)}|N_{t'}(y_{t'},
z_{t'})|d {\mathcal H^3}(w_1,w_2)\\
\lesssim &\|S_1(t)-S_1(t')\|_{L^2(\Omega)}+|t-t'|.
\end{align*}
Since $|t-t'|=|t-t'|^{\frac{m-1}{2}}|t-t'|^{\frac{3-m}{2}}\lesssim
|t-t'|^{\frac{m-1}{2}}$, it suffices to estimate
$\|S_1(t)-S_1(t')\|_{L^2(\Omega)}$. Similarly, we have
\begin{eqnarray*}
S_1(t)-S_1(t')=S_2(t)-S_2(t')+\int_F[N_{t}(y_{t},z_{t})-N_{t'}(y_{t'},z_{t'})]
T(w_1,w_2)d {\mathcal H^3}(w_1,w_2),
\end{eqnarray*}
where
\begin{eqnarray*}
S_2(t)=\int_{F}N(w_1,w_2)T(w_1,w_2)q(y_t)\tilde{f}(z_t)d {\mathcal
H^3}(w_1,w_2),\quad N(w_1,w_2)=N_t(w_1,w_2).
\end{eqnarray*}
Note that
\begin{align*}
&|N_{t}(y_{t},z_{t})-N_{t'}(y_{t'},z_{t'})|\\
=&\left|\frac{(y_1(t)-z_1(t))^{p_3}(y_2(t)-z_2(t))^{p_4}}{|y(t)-z(t)|^{l_2}}
-\frac{(y_1(t')-z_1(t'))^{p_3}(y_2(t')-z_2(t'))^{p_4}}{|y(t')-z(t')|^{l_2}}
\right|\\
=&\left|\frac{\sigma_t^{p_3}(w_1^{(1)}-w_2^{(1)})^{p_3}\sigma_t^{p_4}(w_1^{(2)}
-w_2^{(2)})^{p_4}}{\sigma_t^{l_2}|w_1-w_2|^{l_2}}-\frac{\sigma_{t'}^{p_3}(w_1^{
(1)}-w_2^{(1)})^{p_3}\sigma_{t'}^{p_4}(w_1^{(2)}-w_2^{(2)})^{p_4}}{\sigma_{t'}^{
l_2}|w_1-w_2|^{l_2}}\right|\\
\leq&
|\sigma_t^{-\frac{1}{2}}-\sigma_{t'}^{-\frac{1}{2}}||w_1-w_2|^{-\frac{1}{2}}
\lesssim |t-t'||w_1-w_2|^{-\frac{1}{2}}.
\end{align*}
Hence 
\begin{eqnarray*}\label{e72}
\|S_1(t)-S_1(t')\|_{L^2(\Omega)}\lesssim
\|S_2(t)-S_2(t')\|_{L^2(\Omega)}+|t-t'|.
\end{eqnarray*}
Now we estimate $\|S_2(t)-S_2(t')\|_{L^2(\Omega)}$ which can be rewritten in a
double integral as
\begin{eqnarray*}
&& \|S_2(t)-S_2(t')
 |_{L^2(\Omega)}
 ={\mathbb E}\int_F[q(y_t)\tilde{f}(z_t)-q(y_{t'})\tilde{f}(z_{t'})]R(w_1,w_2)d
{\mathcal H^3}(w_1,w_2)\\
& &\quad\times
\int_F[q(s_t)\tilde{f}(u_t)-q(s_{t'})\tilde{f}(u_{t'})]R(v_1,v_2)d
{\mathcal H^3}(v_1,v_2)\\
& =&\int_{F\times F}G(w_1,w_2,v_1,v_2)R(w_1,w_2)R(v_1,v_2)d {\mathcal
H^3}(w_1,w_2)d {\mathcal H^3}(v_1,v_2),
\end{eqnarray*}
where $(y_t, z_t)=\sigma_t(w_1, w_2)$, $(y_{t'}, z_{t'})=\sigma_{t'}(w_1, w_2)$,
$(s_t, u_t)=\sigma_t(v_1, v_2)$, $(s_{t'}, u_{t'})=\sigma_{t'}(v_1, v_2)$, 
\begin{align*}
 R(w_1, w_2)=N(w_1, w_2)T(w_1, w_2),\quad R(v_1, v_2)=N(v_1, v_2)T(v_1, v_2),
\end{align*}
and 
\begin{eqnarray*}
&&G(w_1,w_2,v_1,v_2)={\mathbb
E}[q(y_t)\tilde{f}(z_t)-q(y_{t'})\tilde{f}(z_{t'})][q(s_t)\tilde{f}(u_t)-q(s_{t'
})\tilde{f}(u_{t'})]\\
&=&q(y_t)q(s_t)C_{\tilde{f}}(z_t,u_t)-q(y_t)q(s_{t'})C_{\tilde{f}}(z_t,u_{t'}
)\\
&&\quad
-q(y_{t'})q(s_t)C_{\tilde{f}}(z_{t'},u_t)+q(y_{t'})q(s_{t'})C_{\tilde{f}}
(z_ { t' },u_{t'})\\
&=&q(y_t)q(s_t)[C_{\tilde{f}}(z_t,u_t)-C_{\tilde{f}}(z_t,u_{t'})]+q(y_t)[
q(s_t)-q(s_{t'})]C_{\tilde{f}}(z_t,u_{t'})\\
&&\quad+q(y_{t'})q(s_{t'})[C_{\tilde{f}}(z_{t'},u_{t'})-C_{\tilde{f}}(z_{t'},
u_t)
]+q(y_{t'})[q(s_{t'})-q(s_t)]C_{\tilde{f}}(z_{t'},u_t).
\end{eqnarray*}
Recall that the covariance function has the form 
\begin{eqnarray*}
C_{\tilde{f}}(y,z)=c_0(y,z)|y-z|^{m-2}+r_1(y,z),
\end{eqnarray*}
where $c_0\in C_0^{\infty}(D\times D)$ and $r_1\in C_0^{\alpha}(D\times D)$ for
any $\alpha<1$. Combining the fact $q\in C_0^{1}(\overline{D})$ yields
immediately that
\begin{eqnarray}\label{e73}
|G(w_1,w_2,v_1,v_2)|\lesssim |t-t'|^{m-2}.
\end{eqnarray}  
Denote $d=|z_t-u_t|=|\sigma_t(w_2-v_2)|$ and
$\delta=|u_t-u_{t'}|=|(\sigma_t-\sigma_{t'})v_2|$, if $\frac{\delta}{d}<1$, we
have
\begin{eqnarray*}
&&\left||z_t-u_t|^{m-2}-|z_t-u_{t'}|^{m-2}\right|\leq\left|(d+\delta)^{m-2}-d^{
m-2}\right|
=d^{m-2}\left|(1+\frac{\delta}{d})^{m-2}-1\right|\\
&&\leq d^{m-2}(m-2)\frac{\delta}{d}
=(m-2)d^{m-3}\delta\lesssim \delta^{\frac{m-1}{2}}\lesssim
|t-t'|^{\frac{m-1}{2}}.
\end{eqnarray*}
Hence, if $|t-t'|<c|w_2-v_2|$ for some small enough $c>0$, we have 
\begin{eqnarray*}
\left||z_t-u_t|^{m-2}-|z_t-u_{t'}|^{m-2}\right|\lesssim |t-t'|^{\frac{m-1}{2}}.
\end{eqnarray*}
Similarly, we have that 
\begin{eqnarray*}
\left||z_{t'}-u_{t'}|^{m-2}-|z_{t'}-u_t|^{m-2}\right|\lesssim
|t-t'|^{\frac{m-1}{2}}
\end{eqnarray*}
holds if $|t-t'|<c|w_2-v_2|$ for some small enough $c>0$. Thus, if we define a
set
\begin{eqnarray*}
P:=\{(w_1,w_2,v_1,v_2)\in F\times F: |w_2-v_2|\leq C|t-t'|\quad\text{for some
large enough } C>0 \},
\end{eqnarray*}
then we have
\begin{eqnarray}\label{e74}
|G(w_1,w_2,v_1,v_2)|\lesssim |t-t'|^{\frac{m-1}{2}} \quad{\rm for}~
(w_1,w_2,v_1,v_2)\in F\times F\setminus P.
\end{eqnarray}
Dividing integration on $F\times F$ over the sets $P\cap F\times F$ and
$(F\times F)\setminus P$, we obtain
\begin{align*}
 &\|S_2(t)-S_2(t')\|_{L^2(\Omega)}\\
 &=\int_{F\times F\cap P}G(w_1,w_2,v_1,v_2)R(w_1,w_2)R(v_1,v_2)d {\mathcal
H^3}(w_1,w_2)d {\mathcal H^3}(v_1,v_2)\\
&\quad +\int_{(F\times F)\setminus P}G(w_1,w_2,v_1,v_2)R(w_1,w_2)R(v_1,v_2)d
{\mathcal H^3}(w_1,w_2)d {\mathcal H^3}(v_1,v_2)\\
&:=I_1+I_2.
\end{align*}
Observe that $|R(w_1,w_2)|\lesssim |w_1-w_2|^{-\frac{1}{2}}$ and
$|R(v_1,v_2)|\lesssim |v_1-v_2|^{-\frac{1}{2}}$, using (\ref{e73}), H\"{o}lder
inequality along with Lemma \ref{lemma9}, we have 
\begin{eqnarray*}
I_1&\lesssim & |t-t'|^{m-2}\int_{F\times F\cap
P}|w_1-w_2|^{-\frac{1}{2}}|v_1-v_2|^{-\frac{1}{2}}d {\mathcal H^3}(w_1,w_2)d
{\mathcal H^3}(v_1,v_2)\\
&\lesssim & |t-t'|^{m-2}\int_{F\times F\cap
P}|w_2-v_2|^{\frac{1}{2}}|w_2-v_2|^{-\frac{1}{2}}|w_1-w_2|^{-\frac{1}{2}}\\
&&\quad \times |v_1-v_2|^{-\frac{1}{2}}d {\mathcal H^3}(w_1,w_2)d {\mathcal
H^3}(v_1,v_2)\\
&\lesssim &|t-t'|^{m-\frac{3}{2}}\int_{F\times F\cap
P}|w_2-v_2|^{-\frac{1}{2}}|w_1-w_2|^{-\frac{1}{2}}|v_1-v_2|^{-\frac{1}{2}}d
{\mathcal H^3}(w_1,w_2)d {\mathcal H^3}(v_1,v_2)\\
&\lesssim &|t-t'|^{\frac{m-1}{2}+\frac{m-2}{2}}\left(\int_{F\times F\cap
P}|w_2-v_2|^{-\frac{3}{2}}d {\mathcal H^3}(w_1,w_2)d {\mathcal
H^3}(v_1,v_2)\right)^{\frac{1}{3}}\times\\
&&\quad \left(\int_{F\times F\cap P}|w_1-w_2|^{-\frac{3}{2}}d {\mathcal
H^3}(w_1,w_2)\right)^{\frac{1}{3}}\left(\int_{F\times F\cap
P}|v_1-v_2|^{-\frac{3}{2}}d {\mathcal H^3}(v_1,v_2)\right)^{\frac{1}{3}}\\
&\lesssim & |t-t'|^{\frac{m-1}{2}}.
\end{eqnarray*}
For $I_2$,  we have from (\ref{e74}) that 
\begin{eqnarray*}
I_2&\lesssim &|t-t'|^{\frac{m-1}{2}}\int_{(F\times F)\setminus
P}|w_1-w_2|^{-\frac{1}{2}}|v_1-v_2|^{-\frac{1}{2}}d {\mathcal H^3}(w_1,w_2)d
{\mathcal H^3}(v_1,v_2)\\
&\lesssim & |t-t'|^{\frac{m-1}{2}}\left(\int_{(F\times F)\setminus
P}|w_1-w_2|^{-1}d {\mathcal
H^3}(w_1,w_2)\right)^{\frac{1}{2}}\\
&&\quad\times\left(\int_{(F\times F)\setminus
P}|v_1-v_2|^{-1}d {\mathcal H^3}(v_1,v_2)\right)^{\frac{1}{2}}\\
&\lesssim & |t-t'|^{\frac{m-1}{2}},
\end{eqnarray*} 
where we use the H\"{o}lder inequality along with Lemma \ref{lemma9}. Hence, we
arrive  
\begin{eqnarray*}
\|S_2(t)-S_2(t')\|_{L^2(\Omega)}\lesssim |t-t'|^{\frac{m-1}{2}},
\end{eqnarray*}
which shows that (\ref{e67}) holds true. By the previous argument we have that
(\ref{e56}) holds for this case. The proof is completed.
\end{proof}

With the convergence of the Born approximation, using Theorem \ref{theorem4}
along with Theorem \ref{theorem5},  we are ready to show the proof of Theorem
\ref{theorem1}. 

\begin{proof}
Recall the convergence of the Born approximation
\begin{eqnarray*}
\bm{u}(x,\omega)=\bm{u}_0(x,\omega)+\bm{u}_1(x,\omega)+\bm{b}(x,\omega),
\end{eqnarray*}
where $\bm{b}(x,\omega)=\sum_{n=2}^{\infty}\bm{u}_n(x,\omega)$. It
follows from (\ref{d10}) that 
\begin{eqnarray*}
\|\bm{b}(x,\omega)\|_{L^{\infty}(U)^2}\lesssim \omega^{-2+\varepsilon'},
\end{eqnarray*} 
for some small enough $\varepsilon'>0$. So
\begin{eqnarray}\label{e75}
\frac{1}{Q-1}\int_1^Q\omega^{m+1}|\bm{b}(x,
\omega)|^2{\rm d}\omega\lesssim
\frac{1}{Q-1}\int_1^Q\omega^{m-3+2\varepsilon'}d\omega\to 0,
\end{eqnarray}
as $Q\to\infty$, where we use the fact $m\in (2,5/2)$.
Recalling Theorem \ref{theorem4} and Theorem \ref{theorem5}, we have 
\begin{align}\label{e76}
&\lim_{Q\rightarrow\infty}\frac{1}{Q-1}\int_1^Q\omega^{m+1}|\bm{u}_0(x,
\omega)|^2 d\omega=a\int_{\mathbb
R^2}\frac{1}{|x-y|}\phi(y)dy,\\\label{e77}
&\lim_{Q\rightarrow\infty}\frac{1}{Q-1}\int_1^Q\omega^{m+1}|\bm{u}_1(x,
\omega)|^2 d\omega=0.
\end{align}
hold almost surely, where $a$ is a constant given in Theorem \ref{theorem1}.
Since
\begin{align*}
|\bm{u}(x,\omega)|^2=&|\bm{u}_0(x,\omega)|^2+|\bm{u}_1(x,\omega)|^2+|\bm{b}(x,
\omega)|^2\\
&+2\Re[\bm{u}_0(x,\omega)\overline{\bm{u}_1(x,\omega)}]+2\Re[\bm{u}_0(x,
\omega)\overline{\bm{b}(x,\omega)}]+2\Re[\bm{u}_1(x,\omega)\overline{\bm{b}(x,
\omega)}],
\end{align*}
along with (\ref{e75})--(\ref{e77}) and the Cauchy-Schwartz inequality, it is
to easy to verify that 
\begin{eqnarray*}
\lim_{Q\to\infty}\frac{1}{Q-1}\int_1^Q\omega^{m+1}|\bm{u}(x,
\omega)|^2 d\omega=a\int_{\mathbb
R^2}\frac{1}{|x-y|}\phi(y)dy.
\end{eqnarray*}
By Lemma 3.8 in \cite{LHL}, we know that the integral $\int_{\mathbb
R^2}\frac{1}{|x-y|}\phi(y)dy$ for all $x\in U$ can uniquely determines the
function $\phi$. The proof is completed. 
\end{proof}

\section{Conclusion}

We have studied the inverse random source scattering problem for the
two-dimensional elastic wave equation with an inhomogeneous, anisotropic mass
density. The source is modeled as a generalized Gaussian random function and its
covariance operator is described as a classical pseudo-differential operator.
Both the direct and the inverse problems are considered. The direct problem is
equivalently formulated as a Lippmann--Schwinger integral equation which is
shown to have a unique solution. Combining the Born approximation and microlocal
analysis, we deduce a relationship between the principle symbol of the
covariance operator for the random source and the amplitude of the displacement
generated from a single realization of the random source. Based on this
connection, we obtain the uniqueness for the reconstruction of the principle
symbol of the random source. In this paper, the mass density or the linear load
is considered to be a smooth deterministic matrix. An ongoing project is to
study the direct and inverse scattering problems when both the source and the
mass density or the linear load are random. Another challenging problem is to
study the random source scattering problem for three-dimensional elastic wave
equation. We hope to be able to report the progress elsewhere in the future.

\end{document}